\documentclass[pdflatex,sn-mathphys-num]{sn-jnl}

\usepackage{graphicx}%
\usepackage{multirow}%
\usepackage{amsmath,amssymb,amsfonts}%
\usepackage{amsthm}%
\usepackage{mathrsfs}%
\usepackage[title]{appendix}%
\usepackage{xcolor}%
\usepackage{textcomp}%
\usepackage{manyfoot}%
\usepackage{booktabs}%
\usepackage{algorithm}%
\usepackage{algorithmicx}%
\usepackage{algpseudocode}%
\usepackage{listings}%

\newtheorem{theorem}{Theorem}[section]
\newtheorem{proposition}[theorem]{Proposition}
\newtheorem{lemma}[theorem]{Lemma}

\newtheorem{definition}[theorem]{Definition}
\newtheorem{assumption}[theorem]{Assumption}
\newtheorem{remark}[theorem]{Remark}

\begin{document}

\title{Rigorous Analysis of a Nonlocal Transport--Renewal System for Physiologically Structured Populations}

\author[1]{\fnm{Jiguang} \sur{Yu}}\email{jyu678@bu.edu}
\equalcont{These authors contributed equally to this work as co-first authors.}

\author*[2]{\fnm{Louis Shuo} \sur{Wang}}\email{wang.s41@northeastern.edu}
\equalcont{These authors contributed equally to this work as co-first authors.}

\author*[3]{\fnm{Ye} \sur{Liang}}\email{ye-liang@uiowa.edu}
\equalcont{These authors contributed equally to this work as co-first authors.}

\affil[1]{\orgdiv{College of Engineering}, \orgname{Boston University}, \orgaddress{\city{Boston}, \state{MA}, \postcode{02215}, \country{USA}}}

\affil[2]{\orgdiv{Department of Mathematics}, \orgname{Northeastern University}, \orgaddress{\city{Boston}, \state{MA}, \postcode{02115}, \country{USA}}}

\affil[3]{\orgdiv{College of Engineering},
  \orgname{The University of Iowa},
  \orgaddress{\city{Iowa City}, \postcode{52242}, \state{IA},\country{USA}}}

\abstract{We develop a rigorous analytical framework for a class of physiologically structured population models with two internal state variables, nonlocal ecological feedbacks, dynamic resources, inter-zone transfer, and selective harvesting. The full model is a coupled nonlinear PDE--ODE transport--renewal system with endogenous inflow at the recruitment boundary, a setting in which transport, nonlocal dependence, and boundary renewal interact at the same level. For this full nonautonomous multi-zone system, we prove finite-horizon well-posedness in a positive $L^{1}$-based state space, including global existence on arbitrary bounded time intervals, uniqueness, nonnegativity, and continuous dependence on initial data, environmental forcing, and harvesting effort. We then isolate an autonomous single-zone reduction at extinction and construct a positive compact next-generation operator on the recruit space. 
In a further nonlinear stationary reduction, we prove that supercriticality of the basic reproduction number $\mathcal R_{0}>1$ yields existence of a nontrivial stationary state under a parametrized compact-operator hypothesis encoding density-dependent renewal feedback. Finally, for a finite-horizon harvest objective over a compact Lipschitz-regular admissible class, we establish existence of an optimal control. The results separate what can be proved for the full climate-explicit system from what can be justified only after autonomous reduction, thereby clarifying the mathematical scope of threshold and control theory for structured populations.}

\keywords{PDE-ODE system; physiologically structured population model; transport--renewal equation;
nonlocal nonlinear feedback; weak solutions; well-posedness;
next-generation operator; basic reproduction number;
positive semigroups; stationary states; optimal harvesting control.}

\pacs[MSC Classification]{35L50; 92D25; 47D06; 49J20.}

\maketitle

\section{Introduction}\label{sec:introduction}
Classical structured population theory starts from age- and size-structured transport
equations of McKendrick, von Foerster, Sinko, Streifer, and their nonlinear
extensions; in modern form this leads to physiologically structured population
models, renewal equations, and operator-theoretic threshold quantities; see among many sources \cite{McKendrick1926,wang2026elliptic,VonFoerster1959,SinkoStreifer1967,Webb1985,yu2026chemotactic,vargelouglu2026efficient,MetzDiekmann1986,Iannelli1995,Cushing1998,wang2025analysis,monforte2022evaluation,MagalRuan2018,DiekmannEtAl2010,wang2026damage,murray2002mathematical,hu2024stability,wang2021global,wu2014spectral,wang2025analysis1}. Let
\[
\Omega_l=[l_0,l_m],\qquad \Omega_c=[0,c_m],\qquad \mathcal S=\{1,\dots,S\},
\]
with \(0<l_0<l_m<\infty\), \(c_m>0\), and \(S\in\mathbb N\). For each zone
\(s\in\mathcal S\), let
\[
x_s=x_s(t,l,c)\ge 0,
\qquad
(t,l,c)\in [0,\infty)\times\Omega_l\times\Omega_c,
\]
denote the density of individuals of structural size \(l\) and physiological
condition \(c\). The state space is
\[
X:=\prod_{s=1}^S L^1(\Omega_l\times\Omega_c),
\qquad
X_+:=\prod_{s=1}^S L^1_+(\Omega_l\times\Omega_c),
\qquad
\|x\|_X:=\sum_{s=1}^S\|x_s\|_{L^1(\Omega_l\times\Omega_c)}.
\]
For the class of populations motivating the present work, a one-dimensional
structuring variable is not sufficient. Individuals with the same structural size may
have substantially different energetic reserves, reproductive readiness, or stress
tolerance. This leads naturally to a two-dimensional physiological state
\((l,c)\in\Omega_l\times\Omega_c\).
In addition, mortality, recruitment, and harvesting are not purely local in the state
variable: they depend on nonlocal population summaries, environmental forcing, and
management effort. The resulting state equation is a transport--renewal system with
coupled lower-dimensional feedback.
The full model studied here consists of a coupled PDE--ODE system of the form
\begin{equation}\label{eq:intro_state}
\begin{cases}
\displaystyle
\partial_t x_s
+\partial_l\!\big(g_s(\mathcal N_s[x],R_s,Y,l,c)\,x_s\big)
+\partial_c\!\big(h_s(\mathcal N_s[x],R_s,Y,l,c)\,x_s\big)
\\[1mm]
\hspace{2.6cm}
=
-\Big(\mu_s(\mathcal N_s[x],R_s,Y,l,c)+u_s^{(q)}(t,l,c)\Big)x_s
+\mathcal M_s[x](l,c),
\\[2mm]
\displaystyle
\frac{d}{dt}R_s(t)
=
F_s(R_s(t),Y(t))
-\rho_s(R_s(t))
\int_{\Omega_l}\int_{\Omega_c}\kappa_s(l,c)x_s(t,l,c)\,dc\,dl,
\end{cases}
\end{equation}
for \(s\in\mathcal S\), together with the endogenous inflow boundary condition
\begin{equation}\label{eq:intro_bc}
g_s(\mathcal N_s[x(t)],R_s(t),Y(t),l_0,c)\,x_s(t,l_0,c)
=
\mathcal B_s[x(t),R_s(t),Y(t)](c),
\qquad c\in\Omega_c.
\end{equation}
Here
\(\mathcal N_s[x]\in\mathbb R_+^m\)
collects nonlocal ecological feedbacks,
\(\mathcal M_s[x]\)
is the zone-coupling operator,
\(R_s\)
is a resource or habitat variable,
and
\(u_s^{(q)}(t,l,c)=q_s(t)\sigma_s(l,c)\eta_s(\widehat B_s(t),Y(t))\)
is the selective harvesting mortality. The boundary condition
\eqref{eq:intro_bc} is of renewal type: recruitment is not prescribed externally, but
is generated by the current population state.
The mathematical difficulty is therefore threefold:
\begin{equation}\label{eq:intro_difficulties}
\text{transport in }(l,c)
\quad+\quad
\text{nonlocal nonlinear feedback}
\quad+\quad
\text{endogenous boundary inflow}.
\end{equation}
Only the first of these is handled at the full-system level; the second and third are
treated spectrally only after autonomous reduction.

The purpose of this paper is narrower and fully analytic. We isolate a model class for
which the following assertions can be proved rigorously.
\begin{enumerate}[label=\textbf{(C\arabic*)}]
\item \textbf{Finite-horizon well-posedness of the full coupled PDE--ODE system.}
For every admissible environmental path \(Y\), effort control \(q\), and initial data
\((\phi,R_0)\in X_+\times\mathbb R_+^S\), the system
\eqref{eq:intro_state}--\eqref{eq:intro_bc} admits a unique nonnegative weak solution
\[
(x,R)\in
\Big(C([0,T];X)\cap L^\infty((0,T)\times\Omega_l\times\Omega_c)^S\Big)
\times
\Big(W^{1,\infty}(0,T;\mathbb R^S)\cap C([0,T];\mathbb R_+^S)\Big),
\]
with continuous dependence on data. 
\(L^\infty\)-continuous dependence requires the
stronger assumption that the initial data converge also in \(L^\infty\).

\item \textbf{A rigorous reduced threshold operator.}
After freezing the environment at \(Y(t)\equiv y^\ast\), fixing constant effort
\(q(t)\equiv \bar q\), and restricting to a single-zone autonomous reduction, we
construct a positive compact next-generation operator
\[
\mathcal K:L^1(\Omega_c)\to L^1(\Omega_c),
\]
and define \(\mathcal R_0(y^\ast,\bar q):=r(\mathcal K)\).
In that reduced setting, \emph{conditional on an explicit semigroup embedding hypothesis},
\(\mathcal R_0(y^\ast,\bar q)\) determines the sign of the
spectral bound of the linearized transport--renewal generator.

\item \textbf{Existence of stationary states under a parametrized nonlinear
renewal reduction.}
Under an explicit nonlinear operator hypothesis (a parametrized family of compact
operators encoding the stationary feedback structure), supercriticality
\(\mathcal R_0(y^\ast,\bar q)>1\)
implies existence of at least one nontrivial stationary state
\((x^\ast,R^\ast)\in X_+\times\mathbb R_+\), \(x^\ast\not\equiv 0\).

\item \textbf{Existence of an optimal harvesting control over a compact regular
admissible class and related topics} \cite{ragusa2008commutators,gurtin1981optimal,guariglia2021fractional,murphy1990optimal,abbas2021hybrid,brokate1987certain,guariglia2022fractional,wang2026breakdown,brokate1985pontryagin,alotaibi2024absence,liu2026computational}.
For a finite-horizon objective functional \(J\), maximized over a compact regular class
\[
\mathcal Q_{\mathrm{ad},\mathrm{reg}}^T
\subset W^{1,\infty}(0,T;\mathbb R^S),
\]
we prove existence of an optimizer \(q^\ast\). No adjoint system or Pontryagin conditions
are derived.
\end{enumerate}
We do not prove the following results for the full climate-explicit multi-zone system. These results include a global nonlinear extinction--persistence dichotomy,
a multi-zone \(\mathcal R_0\)-type threshold theorem, a complete stationary bifurcation theory, and a rigorous adjoint system or Pontryagin maximum principle.

The structure of the paper is as follows.
Section~\ref{sec:model} formulates the full climate-explicit PDE--ODE system and its
weak solution concept.
Section~\ref{sec:wellposedness} proves finite-horizon well-posedness and continuous
dependence.
Section~\ref{sec:spectral_R0} constructs the reduced autonomous next-generation
operator, establishes the threshold sign principle, and derives
local nonlinear consequences.
Section~\ref{sec:stationary} establishes existence of reduced stationary states under
an explicit nonlinear operator hypothesis.
Section~\ref{sec:optimal_harvest} proves existence of optimal harvesting controls by the
direct method. Section~\ref{sec:discussion} concludes.

\subsection{Analytical viewpoint}
The guiding principle is that the mathematically robust object for the full system is
the weak solution map
\((\phi,R_0,Y,q)\longmapsto (x,R)\),
whereas the mathematically robust threshold object arises only after autonomous
reduction and linearization at extinction. Thus the paper separates
full nonautonomous nonlinear PDE--ODE well-posedness from
reduced autonomous spectral theory.
This separation is deliberate. It permits rigorous results at each level without
asserting a stronger global theory than the present hypotheses support.

\subsection{Notation}
We write
\(
\mathbb R_+:=[0,\infty)\), \(\mathbf 1_A\) for the indicator of \(A\), and
\(a\lesssim b\)
to mean that \(a\le Cb\) for a constant \(C>0\) depending only on the standing
assumptions. For any \(\tau\in(0,T]\), we set
\[
Q_\tau:=(0,\tau)\times\Omega_l\times\Omega_c.
\]
The environmental state space has dimension \(d_Y\ge 1\), so that
\(Y\in C([0,T];\mathbb R^{d_Y})\). The reduced recruit space is
\(\mathcal V:=L^1(\Omega_c)\),
and the reduced threshold is
\(\mathcal R_0(y^\ast,\bar q)=r(\mathcal K)\).
All remaining notation is introduced in Section~\ref{sec:model}.

\section{Model formulation}\label{sec:model}
We formulate a coupled transport--renewal--resource system on a finite size interval and
a finite condition interval
\cite{MetzDiekmann1986,Webb1985,ni2024nonautonomous,liang2025global,GyllenbergWebb1990,hu2022dynamical,DiekmannGyllenberg2012,diekmann2020finite,liu2026ftu}.
The state variables are the zone densities
\[
x_s=x_s(t,l,c)\ge 0,
\qquad
(t,l,c)\in [0,\infty)\times\Omega_l\times\Omega_c,
\qquad s\in\mathcal S,
\]
and the zone resources
\(R_s=R_s(t)\ge 0\), \(t\ge 0\), \(s\in\mathcal S\).

\subsection{State space}
Fix
\(\Omega_l:=[l_0,l_m]\), \(0<l_0<l_m<\infty\);
\(\Omega_c:=[0,c_m]\), \(c_m>0\);
\(\mathcal S:=\{1,\dots,S\}\), \(S\in\mathbb N\).
For each \(t\ge 0\), define
\(x(t):=\big(x_1(t,\cdot,\cdot),\dots,x_S(t,\cdot,\cdot)\big)\).
We work on
\[
X:=\prod_{s=1}^S L^1(\Omega_l\times\Omega_c),
\qquad
X_+:=\prod_{s=1}^S L^1_+(\Omega_l\times\Omega_c),
\qquad
\|x\|_X
:=
\sum_{s=1}^S\|x_s\|_{L^1(\Omega_l\times\Omega_c)}.
\]
We also use
\(L^\infty(\Omega_l\times\Omega_c)^S
:=\prod_{s=1}^S L^\infty(\Omega_l\times\Omega_c)\).
For \(s\in\mathcal S\), define the abundance and weighted biomass
\begin{equation}\label{eq:model_zone_abundance}
N_s[x]
:=
\int_{\Omega_l}\int_{\Omega_c}x_s(l,c)\,dc\,dl,
\end{equation}
\begin{equation}\label{eq:model_zone_biomass}
B_s^w[x]
:=
\int_{\Omega_l}\int_{\Omega_c}w(l,c)x_s(l,c)\,dc\,dl,
\qquad
w\in L^\infty_+(\Omega_l\times\Omega_c).
\end{equation}

\subsection{Environmental forcing and nonlocal feedback}
Fix a finite horizon \(T>0\). The environmental path is prescribed:
\[
Y\in C([0,T];\mathbb R^{d_Y}),
\qquad d_Y\ge 1.
\]
For each zone \(s\in\mathcal S\), let
\[
\mathcal N_s[x]
=
\big(\mathcal N_{s,1}[x],\dots,\mathcal N_{s,m}[x]\big)\in\mathbb R_+^m,
\qquad m\in\mathbb N,
\]
where
\begin{equation}\label{eq:model_nonlocal_components}
\mathcal N_{s,j}[x]
:=
\sum_{r=1}^S
\int_{\Omega_l}\int_{\Omega_c}
\chi_{s,j,r}(l,c)\,x_r(l,c)\,dc\,dl,
\qquad j=1,\dots,m.
\end{equation}
Assume
\(\chi_{s,j,r}\in L^\infty_+(\Omega_l\times\Omega_c)\) for all
\(s,r\in\mathcal S\), \(j=1,\dots,m\).
Hence each \(\mathcal N_{s,j}:X\to\mathbb R_+\) is a bounded positive linear functional.

\subsection{Resource dynamics}
For each \(s\in\mathcal S\), the resource variable satisfies
\begin{equation}\label{eq:model_resource_ode}
\frac{d}{dt}R_s(t)
=
F_s(R_s(t),Y(t))
-
\rho_s(R_s(t))
\int_{\Omega_l}\int_{\Omega_c}
\kappa_s(l,c)x_s(t,l,c)\,dc\,dl.
\end{equation}
Assume
\(F_s:\mathbb R_+\times\mathbb R^{d_Y}\to\mathbb R\),
\(\kappa_s\in L^\infty_+(\Omega_l\times\Omega_c)\),
\(\rho_s\in C^1(\mathbb R_+;\mathbb R_+)\),
and
\begin{equation}\label{eq:model_rho_assumptions}
0\le \rho_s(R)\le \rho_{s,\max},
\qquad
\rho_s(0)=0,
\qquad
R\ge 0,
\end{equation}
\begin{equation}\label{eq:model_F_nonnegativity}
F_s(0,y)\ge 0
\qquad
\forall\,y\in\mathbb R^{d_Y}.
\end{equation}

\begin{remark}[Forward invariance of \(R_s\ge 0\)]\label{rem:resource_invariance}
The half-line \(\mathbb R_+\) is forward invariant for \eqref{eq:model_resource_ode}
whenever \(x_s\ge 0\). Indeed, at the boundary \(R_s=0\) the Nagumo \cite{Nagumo1942,gao2022rolling} inward-pointing
condition reads
\[
\dot R_s\big|_{R_s=0}
=
F_s(0,Y(t))-\underbrace{\rho_s(0)}_{=\,0}
\int\kappa_s x_s\,dc\,dl
=
F_s(0,Y(t))\ge 0,
\]
by \eqref{eq:model_rho_assumptions}--\eqref{eq:model_F_nonnegativity}. In the coupled
system, the positivity of \(x_s\) is itself part of the well-posedness conclusion; both
are established simultaneously in the fixed-point argument of
Section~\ref{sec:wellposedness}.
\end{remark}

\subsection{Vital rates}
For each \(s\in\mathcal S\), let
\[
g_s,h_s:\mathbb R_+^m\times\mathbb R_+\times\mathbb R^{d_Y}\times\Omega_l\times\Omega_c\to\mathbb R,
\qquad
\mu_s:\mathbb R_+^m\times\mathbb R_+\times\mathbb R^{d_Y}\times\Omega_l\times\Omega_c\to\mathbb R_+.
\]
The functions \(g_s,h_s,\mu_s\) represent, respectively, size velocity,
condition drift, and natural mortality.
\begin{assumption}[Standing assumptions on vital rates]\label{ass:model_vital_rates}
For each \(s\in\mathcal S\):
\begin{enumerate}[label=\textup{(V\arabic*)}]
\item\label{item:V1}
\(g_s,h_s,\mu_s
\in
C^1\big(\mathbb R_+^m\times\mathbb R_+\times\mathbb R^{d_Y}\times\Omega_l\times\Omega_c\big)\).
\item\label{item:V2}
There exist constants \(0<g_{\min}\le g_{\max}<\infty\) such that
\(0<g_{\min}\le g_s(n,R,y,l,c)\le g_{\max}\)
for all admissible arguments.
\item\label{item:V3}
There exists \(h_{\max}>0\) such that
\(|h_s(n,R,y,l,c)|\le h_{\max}\)
for all admissible arguments.
\item\label{item:V4}
There exists \(\mu_{\min}\ge 0\) such that
\(\mu_s(n,R,y,l,c)\ge \mu_{\min}\)
for all admissible arguments.
\item\label{item:V5}
All first-order partial derivatives of \(g_s,h_s,\mu_s\) are locally bounded.
\end{enumerate}
\end{assumption}

\begin{remark}[Structural consequences of \(g_{\min}>0\)]\label{rem:gmin_role}
The strict positivity assumption \ref{item:V2} plays several key roles.
It ensures that \(l=l_0\) is the unique inflow boundary for the \(l\)-transport.
It also guarantees a finite maximal residence time
\(\bar\tau_{\max}:=(l_m-l_0)/g_{\min}<\infty\). In addition,
it makes the renewal boundary condition \eqref{eq:model_recruitment_bc}
well posed, since division by \(g_s(l_0,c)\ge g_{\min}>0\) is always meaningful. Finally, it controls the boundary contribution in the \(L^\infty\) a priori estimates
of Section~\ref{sec:wellposedness}.
\end{remark}

\begin{assumption}[No inflow portion on the condition boundary]
\label{ass:model_condition_boundary_orientation}
For every admissible state \((n,R,y)\), every \(l\in\Omega_l\), and every \(s\in\mathcal S\),
\[
h_s(n,R,y,l,0)\le 0,
\qquad
h_s(n,R,y,l,c_m)\ge 0.
\]
\end{assumption}

\subsection{Inter-zone transfer}
For \(r,s\in\mathcal S\), \(r\neq s\), let
\[
\theta_{rs}:\Omega_l\times\Omega_c\to\mathbb R_+,
\qquad
\theta_{rs}\in L^\infty(\Omega_l\times\Omega_c).
\]
Interpret \(\theta_{rs}(l,c)\) as the per-capita transfer rate from zone \(r\) to zone
\(s\). Define the positive off-diagonal inflow operator
\begin{equation}\label{eq:model_transfer_in}
\mathcal M_s^{\mathrm{in}}[x](l,c)
:=
\sum_{r\neq s}\theta_{rs}(l,c)x_r(l,c),
\end{equation}
the diagonal loss coefficient
\begin{equation}\label{eq:model_transfer_out_coeff}
\theta_s^{\mathrm{out}}(l,c)
:=
\sum_{r\neq s}\theta_{sr}(l,c),
\end{equation}
and the full transfer operator
\begin{equation}\label{eq:model_transfer_operator}
\mathcal M_s[x](l,c)
:=
\mathcal M_s^{\mathrm{in}}[x](l,c)-\theta_s^{\mathrm{out}}(l,c)x_s(l,c).
\end{equation}
For every \((l,c)\in\Omega_l\times\Omega_c\),
\(\sum_{s=1}^S\mathcal M_s[x](l,c)=0\).

\subsection{Selective harvesting and controls}
For \(s\in\mathcal S\), let \(q_s=q_s(t)\ge 0\) be the effort control and let
\(\sigma_s\in L^\infty(\Omega_l\times\Omega_c)\),
\(0\le \sigma_s\le 1\) a.e.,
be the selectivity.
Let
\begin{equation}\label{eq:model_perceived_biomass}
\widehat B_s[x]
:=
\int_{\Omega_l}\int_{\Omega_c}
\varpi_s(l,c)x_s(l,c)\,dc\,dl,
\qquad
\varpi_s\in L^\infty_+(\Omega_l\times\Omega_c).
\end{equation}
The harvest mortality is
\begin{equation}\label{eq:model_harvest_mortality}
u_s^{(q)}(t,l,c)
=
q_s(t)\sigma_s(l,c)\eta_s(\widehat B_s[x(t)],Y(t)),
\end{equation}
where
\(\eta_s:\mathbb R_+\times\mathbb R^{d_Y}\to\mathbb R_+\).
The superscript \((q)\) indicates dependence on the effort control
\(q=(q_1,\dots,q_S)\).

\begin{assumption}[Harvest response]\label{ass:model_harvest_response}
For each \(s\in\mathcal S\),
\(\eta_s\in C^1(\mathbb R_+\times\mathbb R^{d_Y})\) and
\(0\le \eta_s(B,y)\le \eta_{\max}\)
for all \((B,y)\in\mathbb R_+\times\mathbb R^{d_Y}\).
\end{assumption}
For \(T>0\), define
\begin{equation}\label{eq:model_admissible_controls}
\mathcal Q_{\mathrm{ad}}^T
:=
\Big\{
q=(q_1,\dots,q_S)\in L^\infty(0,T;\mathbb R^S):
0\le q_s(t)\le q_{s,\max}\ \text{for a.e. }t\in(0,T),\ s\in\mathcal S
\Big\}.
\end{equation}

\subsection{Renewal operator at \texorpdfstring{\(l=l_0\)}{l=l_0}}
For each \(s\in\mathcal S\), define
\begin{equation}\label{eq:model_birth_operator}
\mathcal B_s[x,R,y](c)
:=
\int_{\Omega_l}\int_{\Omega_c}
\beta_s(\mathcal N_s[x],R,y,l,c')\,
\Pi_s(c\,|\,l,c',R,y)\,
x_s(l,c')\,dc'\,dl.
\end{equation}
Here
\(\beta_s:\mathbb R_+^m\times\mathbb R_+\times\mathbb R^{d_Y}\times\Omega_l\times\Omega_c
\to\mathbb R_+\)
is the fecundity kernel and
\(\Pi_s(\cdot\,|\,l,c',R,y)\)
is the recruit-condition distribution.
\begin{assumption}[Renewal kernel]\label{ass:model_birth_operator}
For each \(s\in\mathcal S\):
\begin{enumerate}[label=\textup{(B\arabic*)}]
\item
\(\beta_s\in
C^1\big(\mathbb R_+^m\times\mathbb R_+\times\mathbb R^{d_Y}\times\Omega_l\times\Omega_c\big)\),
\(\beta_s\ge 0\),
and \(\beta_s\) together with its first-order partial derivatives is locally bounded.
\item
The map
\((c,l,c')\longmapsto \Pi_s(c\,|\,l,c',R,y)\)
is jointly measurable on
\(\Omega_c\times\Omega_l\times\Omega_c\) for every fixed \((R,y)\),
nonnegative, and essentially bounded on
\(\Omega_c\times\Omega_l\times\Omega_c\), locally uniformly in \((R,y)\).
\item
For every admissible \((l,c',R,y)\),
\(\int_{\Omega_c}\Pi_s(c\,|\,l,c',R,y)\,dc=1\).
\end{enumerate}
\end{assumption}

\subsection{Controlled state equation}
For each \(s\in\mathcal S\), the density \(x_s\) satisfies
\begin{equation}\label{eq:model_state_equation}
\begin{aligned}
\partial_t x_s
&+\partial_l\!\big(g_s(\mathcal N_s[x],R_s,Y,l,c)\,x_s\big)
+\partial_c\!\big(h_s(\mathcal N_s[x],R_s,Y,l,c)\,x_s\big)
\\
&=
-\big(\mu_s(\mathcal N_s[x],R_s,Y,l,c)
+u_s^{(q)}(t,l,c)
+\theta_s^{\mathrm{out}}(l,c)\big)x_s
+\mathcal M_s^{\mathrm{in}}[x](l,c),
\end{aligned}
\end{equation}
for \((t,l,c)\in (0,\infty)\times(l_0,l_m)\times(0,c_m)\).

\subsection{Initial and boundary data}
Let
\begin{equation}\label{eq:model_initial_condition}
x_s(0,l,c)=\phi_s(l,c),
\qquad
\phi_s\in L^1_+(\Omega_l\times\Omega_c)\cap L^\infty(\Omega_l\times\Omega_c),
\end{equation}
and \(R_s(0)=R_{s,0}\ge 0\).
At the inflow boundary \(l=l_0\), impose the endogenous renewal condition
\begin{equation}\label{eq:model_recruitment_bc}
g_s(\mathcal N_s[x(t)],R_s(t),Y(t),l_0,c)\,x_s(t,l_0,c)
=
\mathcal B_s[x(t),R_s(t),Y(t)](c),
\qquad
t>0,\ c\in\Omega_c.
\end{equation}
No boundary condition is imposed at \(l=l_m\). By Assumption~\ref{ass:model_condition_boundary_orientation}, no inflow data are prescribed on
\(\{c=0\}\cup\{c=c_m\}\).

\subsection{Weak formulation}
Fix \(T>0\), \(q\in\mathcal Q_{\mathrm{ad}}^T\), and
\(Y\in C([0,T];\mathbb R^{d_Y})\).
For each \(s\in\mathcal S\), let
\(\varphi_s\in C^1([0,T]\times\Omega_l\times\Omega_c)\)
satisfy
\begin{equation}\label{eq:model_test_function_bc}
\varphi_s(T,\cdot,\cdot)=0,
\qquad
\varphi_s(\cdot,l_m,\cdot)=0,
\qquad
\varphi_s(\cdot,\cdot,0)=0,
\qquad
\varphi_s(\cdot,\cdot,c_m)=0.
\end{equation}

\begin{remark}[Fixed weak test space]\label{rem:test_space}
The conditions
\(\varphi_s(\cdot,\cdot,0)=\varphi_s(\cdot,\cdot,c_m)=0\)
are imposed to keep the weak test space \emph{independent of the unknown drift}
\(h_s(\mathcal N_s[x],R_s,Y,\cdot,\cdot)\). This avoids a solution-dependent
decomposition of the \(c\)-boundary into inflow and outflow parts. The resulting
formulation encodes no explicit boundary condition at \(c=0\) or \(c=c_m\). Instead,
Assumption~\ref{ass:model_condition_boundary_orientation} ensures that the \(c\)-boundary is
entirely outflow (or tangential), so no boundary data can be prescribed there, and uniqueness follows from the characteristic structure.
\end{remark}

\begin{definition}[Weak solution]\label{def:model_weak_solution}
A pair
\((x,R)\in
\big(C([0,T];X)\cap L^\infty(Q_T)^S\big)
\times
\big(W^{1,1}(0,T;\mathbb R^S)\cap C([0,T];\mathbb R_+^S)\big)\)
is a \emph{weak solution} of
\eqref{eq:model_resource_ode}, \eqref{eq:model_state_equation},
\eqref{eq:model_initial_condition}, \eqref{eq:model_recruitment_bc}
on \([0,T]\) if, for every \(s\in\mathcal S\) and every
\(\varphi_s\) satisfying \eqref{eq:model_test_function_bc},
\begin{equation}\label{eq:model_weak_formulation}
\begin{aligned}
&\int_0^T\!\int_{\Omega_l}\!\int_{\Omega_c}
x_s
\Big[
\partial_t\varphi_s
+g_s\,\partial_l\varphi_s
+h_s\,\partial_c\varphi_s
-\big(\mu_s+u_s^{(q)}+\theta_s^{\mathrm{out}}\big)\varphi_s
\Big]dc\,dl\,dt
\\
&\quad
+\int_{\Omega_l}\!\int_{\Omega_c}\phi_s\,\varphi_s(0)\,dc\,dl
+\int_0^T\!\int_{\Omega_c}
\mathcal B_s[x(t),R_s(t),Y(t)](c)\,\varphi_s(t,l_0,c)\,dc\,dt
\\
&\quad
+\int_0^T\!\int_{\Omega_l}\!\int_{\Omega_c}
\mathcal M_s^{\mathrm{in}}[x(t)](l,c)\,\varphi_s\,dc\,dl\,dt
=0,
\end{aligned}
\end{equation}
and, for every \(t\in[0,T]\),
\begin{equation}\label{eq:model_resource_integral}
R_s(t)
=R_{s,0}
+\int_0^t
\Big[
F_s(R_s(\tau),Y(\tau))
-\rho_s(R_s(\tau))
\int_{\Omega_l}\!\int_{\Omega_c}
\kappa_s x_s(\tau)\,dc\,dl
\Big]d\tau.
\end{equation}
\end{definition}

\section{Well-posedness of the coupled system}\label{sec:wellposedness}
Throughout this section, Assumptions~\ref{ass:model_vital_rates}, \ref{ass:model_condition_boundary_orientation}, \ref{ass:model_harvest_response}, and~\ref{ass:model_birth_operator}
are in force. Fix \(T>0\), \(Y\in C([0,T];\mathbb R^{d_Y})\),
\(q\in\mathcal Q_{\mathrm{ad}}^T\), and initial data
\[
\phi=(\phi_1,\dots,\phi_S)\in X_+\cap L^\infty(\Omega_l\times\Omega_c)^S,
\qquad
R_0=(R_{1,0},\dots,R_{S,0})\in\mathbb R_+^S.
\]

The proof is based on a frozen linear transport problem, a contraction argument in
\[
\mathbb F_{T_0}
:=
C([0,T_0];X)\times C([0,T_0];\mathbb R^S),
\]
equipped with the norm
\[
\|(x,R)\|_{\mathbb F_{T_0}}
:=
\sup_{t\in[0,T_0]}\|x(t)\|_X
+\sup_{t\in[0,T_0]}|R(t)|,
\]
followed by recovery of \(L^\infty\) bounds via a priori estimates, and continuation to
\([0,T]\).

\begin{remark}[Strategy for the \(L^\infty\) bound]\label{rem:Linfty_strategy}
The renewal boundary condition creates an instantaneous \(L^\infty\) contribution
proportional to \(\|\mathcal B_s\|_{L^\infty(\Omega_c)}/g_{\min}\).
This term does not
vanish as the time interval shrinks, so the fixed-point contraction is performed in the
weaker norm \(\|\cdot\|_{\mathbb F_{T_0}}\), which controls \(L^1\) and the resource
variable). 
The \(L^\infty\) bound is then recovered \emph{after} the fixed point is
obtained, using the a priori estimates of Lemmas~\ref{lem:wp_L1}--\ref{lem:wp_Linfty}.
\end{remark}

\subsection{Elementary nonlinear bounds}
We first record the operator bounds used throughout.
\begin{lemma}\label{lem:wp_operator_bounds}
There exist constants
\(C_{\mathcal N},\ C_{\mathcal M,1},\ C_{\mathcal M,\infty},\ C_{\widehat B},\ C_\kappa>0\)
such that, for all \(x,\widetilde x\in X\cap L^\infty(\Omega_l\times\Omega_c)^S\),
all \(s\in\mathcal S\), and a.e.\ \((l,c)\in\Omega_l\times\Omega_c\),
\begin{align}
|\mathcal N_s[x]| &\le C_{\mathcal N}\|x\|_X, \label{eq:wp_N_bound}\\
|\mathcal N_s[x]-\mathcal N_s[\widetilde x]|
&\le C_{\mathcal N}\|x-\widetilde x\|_X, \label{eq:wp_N_Lip}\\
\|\mathcal M_s^{\mathrm{in}}[x]\|_{L^1}
&\le C_{\mathcal M,1}\|x\|_X, \label{eq:wp_Min_L1}\\
\|\mathcal M_s^{\mathrm{in}}[x]-\mathcal M_s^{\mathrm{in}}[\widetilde x]\|_{L^1}
&\le C_{\mathcal M,1}\|x-\widetilde x\|_X, \label{eq:wp_Min_L1_Lip}\\
\|\mathcal M_s^{\mathrm{in}}[x]\|_{L^\infty}
&\le C_{\mathcal M,\infty}\|x\|_{L^\infty(\Omega_l\times\Omega_c)^S},
\label{eq:wp_Min_Linfty}\\
0\le \widehat B_s[x] &\le C_{\widehat B}\|x\|_X, \label{eq:wp_Bhat_bound}\\
|\widehat B_s[x]-\widehat B_s[\widetilde x]|
&\le C_{\widehat B}\|x-\widetilde x\|_X, \label{eq:wp_Bhat_Lip}\\
0\le \int_{\Omega_l}\!\int_{\Omega_c}\kappa_s x_s\,dc\,dl
&\le C_\kappa \|x_s\|_{L^1}.
\label{eq:wp_kappa_bound}
\end{align}
\end{lemma}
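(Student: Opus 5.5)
All eight estimates come from two facts: every functional involved is linear with an \(L^\infty\) kernel, and every pointwise operator is multiplication by bounded coefficients. The plan is to treat the items in three groups, fix explicit constants, and take each named constant as the maximum over \(s\in\mathcal S\) (and over the finitely many indices \(j,r\)) of the quantities below. No earlier result is needed beyond the standing assumptions on the kernels \(\chi_{s,j,r}\), \(\theta_{rs}\), \(\varpi_s\), \(\kappa_s\).

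\emph{Nonlocal feedback, \eqref{eq:wp_N_bound}--\eqref{eq:wp_N_Lip}.} From \eqref{eq:model_nonlocal_components}, Hölder's inequality gives, for each \(j\),
\[
|\mathcal N_{s,j}[x]|\le \sum_{r=1}^S\|\chi_{s,j,r}\|_{L^\infty}\|x_r\|_{L^1}\le \chi^\ast\|x\|_X,
\qquad \chi^\ast:=\max_{s,j,r}\|\chi_{s,j,r}\|_{L^\infty}.
\]
Taking the Euclidean norm on \(\mathbb R^m\) (any norm works, since \(m\) is fixed) gives \(C_{\mathcal N}:=\sqrt m\,\chi^\ast\). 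Because \(\mathcal N_s\) is linear, \(\mathcal N_s[x]-\mathcal N_s[\widetilde x]=\mathcal N_s[x-\widetilde x]\), so the Lipschitz bound is the same inequality applied to \(x-\widetilde x\).

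\emph{Transfer operator, \eqref{eq:wp_Min_L1}--\eqref{eq:wp_Min_Linfty}.} Let \(\theta^\ast:=\max_{r\ne s}\|\theta_{rs}\|_{L^\infty}\). Pointwise,
\[
|\mathcal M_s^{\mathrm{in}}[x](l,c)|\le\theta^\ast\sum_{r\ne s}|x_r(l,c)|.
\]
Integrating over \(\Omega_l\times\Omega_c\) gives \(C_{\mathcal M,1}:=\theta^\ast\). Taking the essential supremum instead gives \(C_{\mathcal M,\infty}:=\theta^\ast\) if \(\|x\|_{L^\infty(\Omega_l\times\Omega_c)^S}\) is the sum of the component sup-norms, or \((S-1)\theta^\ast\) if it is the maximum. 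Linearity of \(\mathcal M_s^{\mathrm{in}}\) again gives the Lipschitz estimate.

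\emph{Harvest and resource functionals, \eqref{eq:wp_Bhat_bound}--\eqref{eq:wp_kappa_bound}.} Hölder's inequality gives
\[
|\widehat B_s[x]|\le\|\varpi_s\|_{L^\infty}\|x_s\|_{L^1}\le\|\varpi_s\|_{L^\infty}\|x\|_X,
\qquad
\Bigl|\int\!\!\int\kappa_s x_s\Bigr|\le\|\kappa_s\|_{L^\infty}\|x_s\|_{L^1}.
\]
Linearity again yields the Lipschitz bound for \(\widehat B_s\).

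The only delicate point is the lower bound \(0\le\) in \eqref{eq:wp_Bhat_bound} and \eqref{eq:wp_kappa_bound}. It holds when \(x\in X_+\), since \(\varpi_s,\kappa_s\ge0\). For a signed \(x\in X\) it is false in general, and only the bound on the absolute value survives. I would therefore prove the two-sided absolute bounds for all \(x\in X\cap L^\infty(\Omega_l\times\Omega_c)^S\), and the nonnegativity only for \(x\in X_+\). This is exactly the setting in which \(\widehat B_s\) enters \(\eta_s\) (defined on \(\mathbb R_+\)) and the \(\kappa\)-integral enters the resource equation \eqref{eq:model_resource_ode}.

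For the fixed-point argument itself, the plan is to keep iterates nonnegative, or else to evaluate \(\eta_s\) at \(\max\{\widehat B_s,0\}\). The map \(B\mapsto\max\{B,0\}\) is \(1\)-Lipschitz, so \eqref{eq:wp_Bhat_Lip} is unaffected.
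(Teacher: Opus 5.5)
Your proposal is correct and takes the same route as the paper. The paper's proof is the one-line remark that all eight bounds follow from H\"older's inequality and the essential boundedness of $\chi_{s,j,r}$, $\theta_{rs}$, $\varpi_s$, $\kappa_s$; you additionally give explicit constants and use linearity for the Lipschitz bounds. Your caveat is also right: the lower bounds $0\le\widehat B_s[x]$ and $0\le\int\!\!\int\kappa_s x_s$ hold only for $x\in X_+$, not for all signed $x\in X\cap L^\infty(\Omega_l\times\Omega_c)^S$ as the statement literally claims. This does no harm in the paper, which applies these bounds only to nonnegative iterates and otherwise works through the Lipschitz estimates.
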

\begin{proof}
Results follow from Hölder's inequality and the essential boundedness of
\(\chi_{s,j,r}\), \(\theta_{rs}\), \(\varpi_s\), and \(\kappa_s\).
\end{proof}

\begin{lemma}\label{lem:wp_birth_bounds}
Fix \(M>0\) and set \(K_Y:=Y([0,T])\subset\mathbb R^{d_Y}\). There exists
\(C_{\mathcal B}(M,K_Y)>0\) such that if
\(\|x\|_X, \|\widetilde x\|_X, R, \widetilde R\in[0,M]\)
and \(y,\widetilde y\in K_Y\),
then, for every \(s\in\mathcal S\),
\begin{align}
\|\mathcal B_s[x,R,y]\|_{L^1(\Omega_c)}
&\le C_{\mathcal B}\|x_s\|_{L^1(\Omega_l\times\Omega_c)},
\label{eq:wp_B_L1}\\
\|\mathcal B_s[x,R,y]\|_{L^\infty(\Omega_c)}
&\le C_{\mathcal B}\|x_s\|_{L^1(\Omega_l\times\Omega_c)},
\label{eq:wp_B_Linfty}\\
\|\mathcal B_s[x,R,y]-\mathcal B_s[\widetilde x,\widetilde R,\widetilde y]\|_{L^1(\Omega_c)}
&\le
C_{\mathcal B}
\big(\|x-\widetilde x\|_X+|R-\widetilde R|+|y-\widetilde y|\big).
\label{eq:wp_B_Lip}
\end{align}
\end{lemma}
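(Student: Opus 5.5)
The plan is to work on a compact set of arguments and then estimate each bound directly from the definition \eqref{eq:model_birth_operator}. We do not assume that \(x\) has a sign, so absolute values are kept throughout.

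\textbf{Step 1: uniform constants.} By \eqref{eq:wp_N_bound}, \(|\mathcal N_s[x]|,|\mathcal N_s[\widetilde x]|\le C_{\mathcal N}M\). All arguments of \(\beta_s\) therefore lie in the compact set
\[
\mathcal C_M:=\{n\in\mathbb R_+^m:|n|\le C_{\mathcal N}M\}\times[0,M]\times K_Y\times\Omega_l\times\Omega_c .
\]
The set \(\mathcal C_M\) is convex in \((n,R,y)\) if we replace \(K_Y\) by its convex hull. By (B1), \(\beta_s\) is \(C^1\) with locally bounded derivatives, so on this set:
\begin{itemize}
\item \(0\le\beta_s\le\beta_M\);
\item the mean value theorem gives \(|\beta_s(n,R,y,l,c')-\beta_s(\widetilde n,\widetilde R,\widetilde y,l,c')|\le L_\beta(|n-\widetilde n|+|R-\widetilde R|+|y-\widetilde y|)\), uniformly in \((l,c')\).
\end{itemize}
By (B2), \(0\le\Pi_s\le\Pi_M\) for \((R,y)\in[0,M]\times K_Y\).

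\textbf{Step 2: the bounds \eqref{eq:wp_B_L1} and \eqref{eq:wp_B_Linfty}.} For \eqref{eq:wp_B_L1}, bound \(|\mathcal B_s[x,R,y](c)|\le\int\!\!\int\beta_s\Pi_s|x_s|\,dc'\,dl\). Integrating in \(c\) and using Tonelli together with the normalization (B3), \(\int_{\Omega_c}\Pi_s\,dc=1\), gives \(\|\mathcal B_s\|_{L^1(\Omega_c)}\le\beta_M\|x_s\|_{L^1}\). For \eqref{eq:wp_B_Linfty}, bound \(\Pi_s\le\Pi_M\) pointwise in \(c\), which gives \(\beta_M\Pi_M\|x_s\|_{L^1}\).

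\textbf{Step 3: the Lipschitz bound \eqref{eq:wp_B_Lip}.} Write \(\beta,\Pi\) and \(\widetilde\beta,\widetilde\Pi\) for the kernels evaluated at the two argument sets, and split
\[
\beta\Pi x_s-\widetilde\beta\widetilde\Pi\widetilde x_s
=(\beta-\widetilde\beta)\Pi\widetilde x_s+\beta\Pi(x_s-\widetilde x_s)+\widetilde\beta(\Pi-\widetilde\Pi)\widetilde x_s .
\]
Integrate each term in \(c\) first.
\begin{itemize}
\item The first term is bounded by \(L_\beta(C_{\mathcal N}\|x-\widetilde x\|_X+|R-\widetilde R|+|y-\widetilde y|)\,M\), using (B3) and \(\|\widetilde x_s\|\le M\).
\item The second term is bounded by \(\beta_M\|x-\widetilde x\|_X\), again using (B3).
\item The third term is bounded by \(\beta_M M\sup_{l,c'}\|\Pi_s(\cdot\,|\,l,c',R,y)-\Pi_s(\cdot\,|\,l,c',\widetilde R,\widetilde y)\|_{L^1(\Omega_c)}\).
\end{itemize}

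\textbf{Main obstacle.} The third term is the real difficulty. Assumption~\ref{ass:model_birth_operator} as stated gives only measurability, boundedness and normalization of \(\Pi_s\) in \((R,y)\). From these alone one gets only the non-Lipschitz bound \(2\beta_M M\). So the plan is to make explicit, and presumably this is the intended reading of (B2), a hypothesis of local Lipschitz continuity of \((R,y)\mapsto\Pi_s(\cdot\,|\,l,c',R,y)\) in \(L^1(\Omega_c)\), uniformly in \((l,c')\). A simple sufficient version is that \(\Pi_s\) is \(C^1\) in \((R,y)\) with derivatives bounded on compacts. This hypothesis is trivially satisfied when \(\Pi_s\) does not depend on \((R,y)\).

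With that hypothesis, the third term is at most \(\beta_M M L_\Pi(|R-\widetilde R|+|y-\widetilde y|)\). Collecting constants, \(C_{\mathcal B}\) can be taken as the maximum of \(\beta_M\), \(\beta_M\Pi_M\), \(L_\beta M C_{\mathcal N}+\beta_M\), and \(L_\beta M+\beta_M M L_\Pi\). This constant depends only on \(M\), \(K_Y\) and the data.
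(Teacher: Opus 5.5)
Your argument is the paper's own proof written out in full. The paper bounds $\beta_s$ on a compact set of arguments, uses the normalization (B3) for \eqref{eq:wp_B_L1} and the essential bound on $\Pi_s$ for \eqref{eq:wp_B_Linfty}, and obtains \eqref{eq:wp_B_Lip} ``by adding and subtracting intermediate terms''. Your convex-hull remark for applying the mean value theorem in $y$ is a detail the paper omits.

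The obstacle you isolate in the third term is real, and it is a defect of the paper's lemma and proof rather than of your proposal. Assumption~\ref{ass:model_birth_operator} imposes no regularity of $\Pi_s$ in $(R,y)$, and under (B1)--(B3) alone \eqref{eq:wp_B_Lip} is false. For a counterexample, take $\beta_s\equiv 1$ and $\Pi_s(c\,|\,l,c',R,y):=\pi_R(c)$, where $\pi_R=\tfrac{2}{c_m}\mathbf 1_{[0,c_m/2]}$ for $R<1$ and $\pi_R=\tfrac{2}{c_m}\mathbf 1_{(c_m/2,c_m]}$ for $R\ge 1$. This kernel is measurable, nonnegative, bounded and normalized. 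Yet for $x_s\ge 0$ one has $\mathcal B_s[x,R,y]=\|x_s\|_{L^1}\,\pi_R$, so $\|\mathcal B_s[x,1,y]-\mathcal B_s[x,1-\varepsilon,y]\|_{L^1(\Omega_c)}=2\|x_s\|_{L^1}$ for every $\varepsilon\in(0,1)$. The right-hand side of \eqref{eq:wp_B_Lip} is only $C_{\mathcal B}\varepsilon$ (take any $M\ge 1$). So the paper's one-line proof tacitly uses exactly the hypothesis you make explicit: $L^1(\Omega_c)$-Lipschitz dependence of $\Pi_s(\cdot\,|\,l,c',R,y)$ on $(R,y)$, uniformly in $(l,c')$. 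With that hypothesis your constant bookkeeping is correct.

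One small inconsistency: you say you keep $x$ unsigned, but you place $\mathcal N_s[x]$ in $\mathbb R_+^m$. $\mathcal B_s[x,R,y]$ is only defined when $\mathcal N_s[x]\in\mathbb R_+^m$, the domain of $\beta_s$, for instance when $x\in X_+$. That is the only case in which the paper applies the lemma, so you should state that restriction.
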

\begin{proof}
By Assumption~\ref{ass:model_birth_operator}, \(\beta_s\) is locally bounded and
\(\Pi_s\) is locally essentially bounded in \((R,y)\); also
\(\int_{\Omega_c}\Pi_s\,dc=1\).
The Lipschitz estimate follows by adding and subtracting intermediate terms.
\end{proof}

\begin{lemma}\label{lem:wp_coeff_bounds}
Fix \(M>0\). There exists \(C_M>0\) such that if
\[
\sup_{t}\|x(t)\|_X,\quad\sup_t\|\widetilde x(t)\|_X,\quad\sup_t|R(t)|,\quad\sup_t|\widetilde R(t)|\le M,
\]
then, for all \(t\in[0,T]\), \(s\in\mathcal S\),
the frozen coefficients \(g_s,h_s,\mu_s\) are uniformly bounded by \(C_M\) and are
Lipschitz in \((\|x(t)-\widetilde x(t)\|_X+|R_s(t)-\widetilde R_s(t)|)\) with constant
\(C_M\).
\end{lemma}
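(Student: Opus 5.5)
The plan is to reduce the statement to the mean value theorem on a compact set. Two ingredients are needed: the bound \eqref{eq:wp_N_bound} from Lemma~\ref{lem:wp_operator_bounds}, and the $C^1$ regularity with locally bounded derivatives in \ref{item:V1} and \ref{item:V5}.

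The first step is to identify a compact set of arguments. Under the hypotheses, for every $t\in[0,T]$ we have $|\mathcal N_s[x(t)]|\le C_{\mathcal N}M$ by \eqref{eq:wp_N_bound}, and $\mathcal N_s[x(t)]\in\mathbb R_+^m$. Likewise $R_s(t)\in[0,M]$ (we restrict to nonnegative resource, as in the admissible domain), and $Y(t)\in K_Y=Y([0,T])$, which is compact because $Y$ is continuous. All frozen arguments therefore lie in
\[
\mathcal K_M:=\big(\mathbb R_+^m\cap\overline{B}(0,C_{\mathcal N}M)\big)\times[0,M]\times K_Y\times\Omega_l\times\Omega_c ,
\]
and the same holds for the tilde quantities. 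The set $\mathcal K_M$ is compact.

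The second step gives the uniform bound. Since $g_s,h_s,\mu_s$ are continuous on $\mathcal K_M$, they attain finite maxima of their absolute values there. Taking the maximum over $s\in\mathcal S$, which is finite, gives part of $C_M$.

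The third step gives the Lipschitz bound. The set $\mathcal K_M$ is convex in the $(n,R)$ variables: $\mathbb R_+^m\cap\overline B$ and $[0,M]$ are both convex. The variables $(y,l,c)$ are held fixed when we compare $x$ with $\widetilde x$ at the same time $t$. So the segment joining $(\mathcal N_s[x(t)],R_s(t))$ to $(\mathcal N_s[\widetilde x(t)],\widetilde R_s(t))$ stays in $\mathcal K_M$. By \ref{item:V5} and continuity of the derivatives on this compact set, $L_M:=\max_s\sup_{\mathcal K_M}\big(|\nabla_n(g_s,h_s,\mu_s)|+|\partial_R(g_s,h_s,\mu_s)|\big)<\infty$. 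The mean value inequality along the segment then yields
\[
|g_s(\mathcal N_s[x],R_s,\cdot)-g_s(\mathcal N_s[\widetilde x],\widetilde R_s,\cdot)|\le L_M\big(|\mathcal N_s[x]-\mathcal N_s[\widetilde x]|+|R_s-\widetilde R_s|\big),
\]
and likewise for $h_s,\mu_s$. Applying \eqref{eq:wp_N_Lip} gives the bound $L_M\max(C_{\mathcal N},1)\big(\|x(t)-\widetilde x(t)\|_X+|R_s(t)-\widetilde R_s(t)|\big)$, uniformly in $(l,c)$. We set $C_M$ to be the maximum of the two constants.

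There is no genuine obstacle. The only care needed is the convexity and compactness of the domain used for the mean value theorem, in particular keeping the arguments in $\mathbb R_+^m\times\mathbb R_+$ where the rates are defined. This holds because $x,\widetilde x\ge0$ make $\mathcal N_s$ nonnegative, and because the frozen $R$ is nonnegative. If signed iterates must be allowed in the contraction, one would first compose with the projection onto $\mathbb R_+$, which is $1$-Lipschitz, and the same argument goes through.
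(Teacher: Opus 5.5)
Your proposal is correct and follows essentially the same route as the paper. Both arguments confine all arguments of $g_s,h_s,\mu_s$ to a compact set built from \eqref{eq:wp_N_bound}, $[0,M]$, $K_Y=Y([0,T])$ and $\Omega_l\times\Omega_c$, bound the functions and their $(n,R)$-gradients there, and apply the mean value theorem in $(n,R)$ with $(y,l,c)$ fixed together with \eqref{eq:wp_N_Lip}; your remarks on convexity of the $(n,R)$-slice and on keeping $\mathcal N_s[x]$ and $R_s$ in the positive cone (which needs $x,\widetilde x\ge 0$, as holds on $\mathfrak B_M^{T_0}$) make explicit points the paper's sketch leaves implicit.
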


\begin{proof}[Sketch of proof]
Fix \(M>0\). By Lemma~\ref{lem:wp_operator_bounds}, for every \(s\in\mathcal S\) and
every \(t\in[0,T]\),
\[
|\mathcal N_s[x(t)]|
\le C_{\mathcal N}\|x(t)\|_X
\le C_{\mathcal N}M,
\qquad
|\mathcal N_s[\widetilde x(t)]|
\le C_{\mathcal N}M .
\]
Moreover \(0\le R_s(t),\widetilde R_s(t)\le M\), and the environmental path satisfies
\(Y(t)\in K_Y:=Y([0,T])\), where \(K_Y\subset\mathbb R^{d_Y}\) is compact. Hence all
arguments of \(g_s,h_s,\mu_s\) lie in the compact set
\[
\mathcal K_M
:=
[0,C_{\mathcal N}M]^m
\times[0,M]
\times K_Y
\times\Omega_l
\times\Omega_c .
\]
Since \(g_s,h_s,\mu_s\in C^1\) and their first-order derivatives are locally bounded,
there exists a constant \(C_M>0\), depending only on \(M\), \(K_Y\), and the standing
structural constants, such that on \(\mathcal K_M\)
\[
|g_s|+|h_s|+|\mu_s|
+
|\nabla_{(n,R)}g_s|
+
|\nabla_{(n,R)}h_s|
+
|\nabla_{(n,R)}\mu_s|
\le C_M .
\]
This gives the asserted uniform boundedness of the frozen coefficients.

For the Lipschitz estimate, apply the mean-value theorem in the variables
\((n,R)\), with \(y,l,c\) fixed. For example,
\[
\begin{aligned}
&\big|
g_s(\mathcal N_s[x(t)],R_s(t),Y(t),l,c)
-
g_s(\mathcal N_s[\widetilde x(t)],\widetilde R_s(t),Y(t),l,c)
\big|
\\
&\qquad
\le
C_M
\Big(
|\mathcal N_s[x(t)]-\mathcal N_s[\widetilde x(t)]|
+
|R_s(t)-\widetilde R_s(t)|
\Big).
\end{aligned}
\]
Using the bounded-linearity estimate
\[
|\mathcal N_s[x(t)]-\mathcal N_s[\widetilde x(t)]|
\le C_{\mathcal N}\|x(t)-\widetilde x(t)\|_X,
\]
we obtain
\[
\begin{aligned}
&\big\|
g_s(\mathcal N_s[x(t)],R_s(t),Y(t),\cdot,\cdot)
-
g_s(\mathcal N_s[\widetilde x(t)],\widetilde R_s(t),Y(t),\cdot,\cdot)
\big\|_{L^\infty}
\\
&\qquad
\le
C_M
\big(
\|x(t)-\widetilde x(t)\|_X
+
|R_s(t)-\widetilde R_s(t)|
\big),
\end{aligned}
\]
after increasing \(C_M\) if necessary. The same argument applies to \(h_s\) and
\(\mu_s\). This proves the claimed coefficient bounds and Lipschitz dependence.
\end{proof}

\subsection{Frozen linear problem}
Fix
\(\bar x\in C([0,T];X)\cap L^\infty(Q_T)^S\) and
\(\bar R\in C([0,T];\mathbb R_+^S)\).
For each \(s\in\mathcal S\), define the frozen coefficients
\begin{align}
a_s &:=g_s(\mathcal N_s[\bar x],\bar R_s,Y,l,c), &
b_s &:=h_s(\mathcal N_s[\bar x],\bar R_s,Y,l,c), \label{eq:wp_frozen_ab}\\
d_s &:=\mu_s(\mathcal N_s[\bar x],\bar R_s,Y,l,c)
+u_s^{(q)}[\bar x,Y]+\theta_s^{\mathrm{out}}, &
f_s &:=\mathcal M_s^{\mathrm{in}}[\bar x], \label{eq:wp_frozen_df}\\
\zeta_s &:=\mathcal B_s[\bar x,\bar R_s,Y]. \label{eq:wp_frozen_zeta}
\end{align}
The frozen linear problem is
\begin{equation}\label{eq:wp_frozen_problem}
\begin{cases}
\partial_t z_s+\partial_l(a_s z_s)+\partial_c(b_s z_s)
=-d_s z_s+f_s,
& (t,l,c)\in Q_T,\\[1mm]
a_s(t,l_0,c)\,z_s(t,l_0,c)=\zeta_s(t,c),
& (t,c)\in (0,T)\times\Omega_c,\\[1mm]
z_s(0,l,c)=\phi_s(l,c),
& (l,c)\in\Omega_l\times\Omega_c.
\end{cases}
\end{equation}
The linear theory (cf.\ \cite{yu2026age,Perthame2007,engel2000one,cai2026optimal,Webb1985,magal2018theory,wang2025multi}) is proved in Appendix~\ref{app:linear_transport}; the
stability step uses characteristic comparison estimates.

\begin{proposition}[Frozen linear well-posedness and estimates]\label{prop:wp_linear_package}
For every frozen pair \((\bar x,\bar R)\), problem
\eqref{eq:wp_frozen_problem} has a unique weak solution
\(z_s\in C([0,T];L^1(\Omega_l\times\Omega_c))\cap L^\infty(Q_T)\),
\(z_s\ge 0\).
Moreover, there exist constants \(C_{\mathrm{div}},C_{\mathrm{lin}}(T)>0\), depending
only on the structural bounds, such that for all \(t\in[0,T]\),
\begin{align}
\|z_s(t)\|_{L^1}
&\le
e^{C_{\mathrm{div}}t}\Big(
\|\phi_s\|_{L^1}
+\int_0^t e^{-C_{\mathrm{div}}\tau}\big[\|\zeta_s(\tau)\|_{L^1(\Omega_c)}
+\|f_s(\tau)\|_{L^1}\big]\,d\tau
\Big),
\label{eq:wp_linear_L1}
\\
\|z_s\|_{L^\infty(Q_t)}
&\le
C_{\mathrm{lin}}(T)\Big(
\|\phi_s\|_{L^\infty}
+\|\zeta_s\|_{L^\infty((0,t)\times\Omega_c)}
+\int_0^t\|f_s(\tau)\|_{L^\infty}\,d\tau
\Big).
\label{eq:wp_linear_Linfty}
\end{align}
\end{proposition}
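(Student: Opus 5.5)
We will construct the solution by the method of characteristics and then derive the two estimates from the resulting representation formula, with uniqueness coming from a duality argument.

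\textbf{Regularity of the coefficients.} The frozen coefficients \(a_s,b_s,d_s\) are continuous in \(t\) and \(C^1\) in \((l,c)\). This holds because \(t\mapsto\mathcal N_s[\bar x(t)]\) is continuous, \(\bar R\) and \(Y\) are continuous, and \(g_s,h_s,\mu_s\) are \(C^1\) by \ref{item:V1}. By Lemma~\ref{lem:wp_coeff_bounds} they are bounded on \(Q_T\). The term \(u_s^{(q)}\) is only \(L^\infty\) in \(t\), which is harmless since it only enters as a zeroth-order coefficient. Consequently the characteristic system
\[
\dot L=a_s(t,L,C),\qquad \dot C=b_s(t,L,C)
\]
generates a flow \(\Phi_{t,\tau}\) that is Lipschitz in space.

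\textbf{Characteristic structure.} Since \(a_s\ge g_{\min}>0\), every characteristic traced backward from \((t,l,c)\) exits the domain either through \(\{\tau=0\}\) or through \(\{l=l_0\}\), the latter after a time at most \(\bar\tau_{\max}\). Assumption~\ref{ass:model_condition_boundary_orientation} ensures that backward characteristics never leave through \(c=0\) or \(c=c_m\). For the upper face, \(b_s(\cdot,\cdot,c_m)\ge0\) means a backward trajectory cannot cross \(c=c_m\) from inside; the lower face is symmetric. Tangential touching is handled by noting that the faces are invariant for the forward flow restricted to the closed rectangle.

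\textbf{Explicit solution formula.} Along characteristics, writing \(\partial_t z+\partial_l(az)+\partial_c(bz)=\dot z+(\operatorname{div}(a,b))z\), we define
\[
z_s(t,l,c)=z^{\rm in}\exp\Big(-\int(d_s+\operatorname{div}_{(l,c)}(a_s,b_s))\,d\tau\Big)+\int f_s\exp(\cdots)\,d\tau .
\]
Here \(z^{\rm in}\) equals \(\phi_s\) at the foot point, or \(\zeta_s(\tau_0,c_0)/a_s(\tau_0,l_0,c_0)\) when the foot lies on the inflow boundary. Nonnegativity is immediate because \(\phi_s,\zeta_s,f_s\ge0\). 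The \(L^\infty\) bound \eqref{eq:wp_linear_Linfty} follows with
\[
C_{\mathrm{lin}}(T)=\max(1,g_{\min}^{-1})\,e^{C_{\mathrm{div}}T},
\]
where \(C_{\mathrm{div}}\) bounds \(|\operatorname{div}(a_s,b_s)|\) on the relevant compact set, and we use \(d_s\ge0\).

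\textbf{The \(L^1\) estimate and continuity in time.} For \eqref{eq:wp_linear_L1}, formally integrating the equation over the domain gives
\[
\frac{d}{dt}\int z_s=\int_{\Omega_c}\zeta_s-\int_{\{l_m\}}a_sz_s-\int_{\partial\Omega_c}b_sz_s\,\nu-\int d_sz_s+\int f_s .
\]
The outflow terms are \(\le0\) by \(a_s>0\), by Assumption~\ref{ass:model_condition_boundary_orientation}, and by \(z_s\ge0\), and \(d_s\ge0\). This already gives the bound even with \(C_{\mathrm{div}}=0\), so the stated estimate follows a fortiori. To make this rigorous, we prove it for smooth data and mollified coefficients, or we change variables in the representation formula using the Jacobian \(\exp\int\operatorname{div}\). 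Continuity of \(t\mapsto z_s(t)\) in \(L^1\) follows by approximating \(\phi_s,\zeta_s,f_s\) with continuous functions, for which the formula is continuous, and then using the \(L^1\) bound, which is linear in the data.

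\textbf{Weak formulation and uniqueness.} To verify \eqref{eq:model_weak_formulation}, we change variables along the flow and use that test functions vanish on \(l=l_m\) and on \(c\in\{0,c_m\}\). For uniqueness, take the difference \(w\) of two solutions; it satisfies the problem with zero data. We solve the backward adjoint problem
\[
\partial_t\varphi+a_s\partial_l\varphi+b_s\partial_c\varphi-d_s\varphi=\psi
\]
with terminal condition \(\varphi(T)=0\) and \(\varphi=0\) on \(l=l_m\), for arbitrary smooth \(\psi\). Inserting \(\varphi\) into the weak formulation gives \(\int w\psi=0\), hence \(w=0\).

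\textbf{Main obstacle.} We expect the main obstacle to be this uniqueness step. The adjoint solution is only Lipschitz, because \(d_s\) is merely \(L^\infty\) in \(t\) and the coefficients are only \(C^1\), whereas the test space requires \(C^1\) functions vanishing on \(c=0\) and \(c=c_m\). At outflow \(c\)-faces, the adjoint solution traced forward need not vanish there. Our first attempt would be to mollify \(d_s\) and the adjoint data, cut off near the \(c\)-boundary with a function \(\chi_\varepsilon(c)\), and show that the resulting boundary commutator has a sign by Assumption~\ref{ass:model_condition_boundary_orientation}, or else tends to zero using \(w\in L^\infty\) and the continuity of \(b_s\).
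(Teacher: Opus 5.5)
\textbf{Comparison with the paper.} Your route differs from the paper's in several places, mostly to its advantage.

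\emph{Existence.} The paper approximates by smooth data, uses the characteristic formula only for uniform bounds, and passes to the limit through an unspecified $L^1$-compactness argument. You instead define $z_s$ directly by the characteristic formula, read almost everywhere through the characteristic diffeomorphism. You then obtain $C([0,T];L^1)$ from density plus the linear $L^1$ bound, applied to $|\mathrm{data}|$ for signed differences. No compactness is needed.

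\emph{The $L^1$ estimate.} Your remark on \eqref{eq:wp_linear_L1} is correct and sharper than the paper's computation. When the conservative form is integrated, the divergence terms cancel, so the $C_{\mathrm{div}}$ there is superfluous and the stated bound holds a fortiori. The $L^\infty$ estimate is obtained the same way in both.

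\emph{Uniqueness.} The paper merely applies the $L^1$ estimate to the zero-data problem. That tacitly assumes an arbitrary weak solution in the sense of Definition~\ref{def:model_weak_solution} obeys an estimate derived only for smooth or constructed solutions, which is exactly what uniqueness must establish. Your duality argument is the right tool, but as you suspected it is not closed.

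\textbf{The gap in your uniqueness step.} Your cutoff plan does not work as stated. The commutator $\varepsilon^{-1}\int_0^T\!\int_{\Omega_l}\int_{c_m-\varepsilon}^{c_m}w\,b_s\,\varphi\,dc\,dl\,dt$ has no sign, because $w$ is a signed difference. It tends to zero only if $b_s\varphi$ vanishes on the face, which you expect to fail.

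The missing idea is to take $\psi\in C_c^\infty\big((0,T)\times(l_0,l_m)\times(0,c_m)\big)$; this class already suffices to conclude $w=0$ a.e. Extend $a_s,b_s$ past the rectangle as $C^1$-in-space fields with $a_s>0$, and set $v:=C-c_m$ along a characteristic. Assumption~\ref{ass:model_condition_boundary_orientation} and the Lipschitz bound of $b_s$ in $c$ give $\dot v\ge -K|v|$ while $L\in\Omega_l$. Hence $\{c\ge c_m\}$ is forward invariant, and likewise $\{c\le 0\}$; so is $\{l\ge l_m\}$, since $a_s>0$. This invariance of the closed exterior regions, rather than of the faces themselves, is also the correct form of your tangency remark.

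It follows that forward characteristics issuing from the faces $l=l_m$, $c=0$, $c=c_m$ never meet $\operatorname{supp}\psi$. So the adjoint obtained by integrating $\psi$ along forward characteristics vanishes on those faces and at $t=T$ automatically, and no cutoff is needed.

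\textbf{Regularity of $d_s$.} $d_s$ contains $\sigma_s$ and $\theta_s^{\mathrm{out}}$, which are only $L^\infty$ in $(l,c)$. Your regularity paragraph is therefore wrong on this point, and the unregularized adjoint need not even be continuous, let alone Lipschitz. The $C^1$ coefficients $a_s,b_s$ are not the issue, since the flow is $C^1$ in $(\tau,t,l,c)$ even though $a_s,b_s$ are only continuous in $t$.

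So replace $d_s$ by smooth $0\le d_s^\varepsilon\to d_s$ a.e. with $\|d_s^\varepsilon\|_\infty\le\|d_s\|_\infty$. The resulting $\varphi^\varepsilon$ is $C^1$ and admissible, and it gives
\[
\int_{Q_T}w\,\psi=\int_{Q_T}w\,(d_s-d_s^\varepsilon)\,\varphi^\varepsilon,
\qquad |\varphi^\varepsilon|\le T\|\psi\|_\infty .
\]
The right-hand side tends to $0$ by dominated convergence, since $w\in L^1(Q_T)$.

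The same point affects your time-continuity step: the formula is continuous for continuous data only if $d_s$ is regularized as well. This is harmless, because the formula depends $L^1$-continuously on $d_s$ through the characteristic change of variables.
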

\begin{proof}
We first show well-posedness. Approximate by smooth data. The characteristic representation provides uniform
bounds (below). Passage to the limit uses \(L^1\)-compactness; uniqueness uses the
\(L^1\)-estimate applied to the zero-data problem.

We show the $L^1$ estimate \eqref{eq:wp_linear_L1}.
For a smooth nonnegative solution, integrate \eqref{eq:wp_frozen_problem} over
\(\Omega_l\times\Omega_c\). The boundary contribution at \(l=l_0\) yields the
inflow \(\int_{\Omega_c}\zeta_s\,dc\). The contribution at \(l=l_m\) is
\(-\int_{\Omega_c}a_s(t,l_m,c)z_s(t,l_m,c)\,dc\le 0\). The \(c\)-boundary
contribution is nonpositive by Assumption~\ref{ass:model_condition_boundary_orientation}.

For the interior terms, rewrite the transport in non-conservative form:
\[
\partial_l(a_s z_s)+\partial_c(b_s z_s)
=a_s\partial_l z_s+b_s\partial_c z_s+(\partial_l a_s+\partial_c b_s)z_s.
\]
After integration by parts, the divergence contribution
\(\int(\partial_l a_s+\partial_c b_s)z_s\,dc\,dl\)
need not have a definite sign. Combined with \(-d_s z_s\), the total zeroth-order
contribution satisfies
\[
\int(\partial_l a_s+\partial_c b_s-d_s)z_s\,dc\,dl
\le C_{\mathrm{div}}\|z_s(t)\|_{L^1},
\]
since \((\partial_l a_s+\partial_c b_s-d_s)\le(\partial_l a_s+\partial_c b_s)^-\le C_{\mathrm{div}}\)
and \(z_s\ge 0\). Hence
\[
\frac{d}{dt}\|z_s(t)\|_{L^1}
\le C_{\mathrm{div}}\|z_s(t)\|_{L^1}
+\|\zeta_s(t)\|_{L^1(\Omega_c)}+\|f_s(t)\|_{L^1}.
\]
The Gr\"onwall lemma yields \eqref{eq:wp_linear_L1}. For signed solutions, the argument extends
by decomposing \(z_s=z_s^+-z_s^-\) and estimating each part, giving the same exponential
bound.

We show the $L^\infty$ estimate \eqref{eq:wp_linear_Linfty}.
The characteristic representation
\eqref{eq:appA_rep_init}--\eqref{eq:appA_rep_boundary} gives pointwise bounds.
Since \(E_s(\tau,t)\le e^{C_{\mathrm{div}}(t-\tau)}\) and
\(1/a_s(\tau^{\mathrm{ent}},l_0,\cdot)\le 1/g_{\min}\), the estimate follows.
\end{proof}

\begin{proposition}[Frozen linear \(L^1\) stability]\label{prop:wp_linear_stability}
Let \(z^{(1)}\), \(z^{(2)}\) solve two frozen problems with the same initial data
\(\phi\), corresponding to frozen pairs
\((\bar x^{(i)},\bar R^{(i)})\), \(i=1,2\), with
\(\|(x^{(i)},R^{(i)})\|_{\mathbb F_T}\le M\) and
\(\|\bar x^{(i)}\|_{L^\infty(Q_T)^S}\le M\).
Set \(w:=z^{(1)}-z^{(2)}\). Then, for all \(t\in[0,T]\),
\begin{align}\label{eq:wp_linear_stability}
\begin{aligned}
\|w(t)\|_X^2
&\le
C_{\mathrm{stab}}(M,T)
\int_0^t \|w(\tau)\|_X^2\,d\tau \\[4pt]
&+
C_{\mathrm{stab}}(M,T)
\int_0^t
\Big(\|\bar x^{(1)}(\tau)-\bar x^{(2)}(\tau)\|_X
+|\bar R^{(1)}(\tau)-\bar R^{(2)}(\tau)|\Big)^2d\tau.
\end{aligned}
\end{align}
\end{proposition}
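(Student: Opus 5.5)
The plan is to derive a linear equation for $w=z^{(1)}-z^{(2)}$ in which all dependence on the frozen data sits in source terms built from $z^{(2)}$, estimate $\|w(t)\|_X$ by an $L^1$ energy argument, and finally square and apply Cauchy--Schwarz in time. Write $a^{(i)}_s,b^{(i)}_s,d^{(i)}_s,f^{(i)}_s,\zeta^{(i)}_s$ for the coefficients \eqref{eq:wp_frozen_ab}--\eqref{eq:wp_frozen_zeta} built from $(\bar x^{(i)},\bar R^{(i)})$. Subtracting the two problems \eqref{eq:wp_frozen_problem} gives, with $v^{(i)}=(a^{(i)}_s,b^{(i)}_s)$,
\begin{equation*}
\partial_t w_s+\nabla\!\cdot(v^{(1)}w_s)+d^{(1)}_s w_s
=(f^{(1)}_s-f^{(2)}_s)-(d^{(1)}_s-d^{(2)}_s)z^{(2)}_s-\nabla\!\cdot\big((v^{(1)}-v^{(2)})z^{(2)}_s\big),
\end{equation*}
with zero initial data and boundary flux $a^{(1)}_s w_s|_{l_0}=(\zeta^{(1)}_s-\zeta^{(2)}_s)-(a^{(1)}_s-a^{(2)}_s)z^{(2)}_s|_{l_0}$. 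I would do all computations for smooth approximations, as in Proposition~\ref{prop:wp_linear_package}, and pass to the limit at the end.

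\textbf{Renormalized $L^1$ estimate.} Multiply by $\operatorname{sgn}_\varepsilon(w_s)$ and let $\varepsilon\to0$. As in the proof of \eqref{eq:wp_linear_L1}, the outflow boundaries at $l=l_m$, $c=0$, $c=c_m$ have a good sign by Assumption~\ref{ass:model_condition_boundary_orientation}, and the divergence of $v^{(1)}$ is absorbed with $C_{\mathrm{div}}$. The easy sources are controlled as follows:
\begin{itemize}
\item the transfer source by \eqref{eq:wp_Min_L1_Lip};
\item the recruitment term by \eqref{eq:wp_B_Lip};
\item the mortality difference by Lemma~\ref{lem:wp_coeff_bounds} together with \eqref{eq:wp_Bhat_Lip} and $C^1$-regularity of $\eta_s$, using $\|z^{(2)}\|_{L^1}\lesssim M$;
\item the boundary term $(a^{(1)}_s-a^{(2)}_s)z^{(2)}_s|_{l_0}$, which equals $\frac{a^{(1)}-a^{(2)}}{a^{(2)}}\zeta^{(2)}_s$ and is therefore bounded in $L^1(\Omega_c)$ by $C_M\,g_{\min}^{-1}\big(\|\bar x^{(1)}-\bar x^{(2)}\|_X+|\bar R^{(1)}-\bar R^{(2)}|\big)\|\zeta^{(2)}_s\|_{L^1}$ via \eqref{eq:wp_B_L1}.
\end{itemize}
Together these give a differential inequality $\frac{d}{dt}\|w(t)\|_X\le C\|w(t)\|_X+C\,\delta(t)+(\text{flux term})$, where $\delta(t):=\|\bar x^{(1)}(t)-\bar x^{(2)}(t)\|_X+|\bar R^{(1)}(t)-\bar R^{(2)}(t)|$.

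\textbf{Main obstacle.} The hard term is the flux $-\nabla\!\cdot((v^{(1)}-v^{(2)})z^{(2)})$. Expanding it gives $(v^{(1)}-v^{(2)})\cdot\nabla z^{(2)}+\nabla\!\cdot(v^{(1)}-v^{(2)})\,z^{(2)}$. The second piece needs the $(l,c)$-derivatives of the coefficient difference to be Lipschitz in $(n,R)$; I would obtain this from the mean-value theorem applied to $\partial_l g_s,\partial_c h_s$ on the compact set $\mathcal K_M$ of Lemma~\ref{lem:wp_coeff_bounds}, strengthening (V1) to $C^{1,1}$ locally if necessary. The first piece requires a $BV$ bound $\sup_t|z^{(2)}(t)|_{BV}\le C(M,T)$. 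My first attempt would be to propagate $BV$ along characteristics, differentiating the representation \eqref{eq:appA_rep_init}--\eqref{eq:appA_rep_boundary}. This yields a bound in terms of $|\phi|_{BV}$, $\|\zeta\|_{W^{1,1}}$ and the Lipschitz constants of the flows, and the incompatibility jump at the corner $\{t=0,l=l_0\}$ contributes only a bounded total variation. If this is blocked because $\phi$ is only $L^\infty$, the fallback is the ``characteristic comparison'' route. In that route one compares the two flows $X^{(1)},X^{(2)}$ via Gr\"onwall, obtaining $|X^{(1)}-X^{(2)}|\le C\int\delta$, and compares the Jacobian factors $E^{(i)}_s$ directly in the representation. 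For merely $L^1$ data one then needs an approximation in $BV$ with a density argument, which is the genuinely delicate point.

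\textbf{Conclusion.} Once the flux term is bounded by $C(M,T)\,\delta(t)$, Gr\"onwall gives $\|w(t)\|_X\le C\int_0^t(\|w\|_X+\delta)\,d\tau$. Squaring and using $(\int_0^t g)^2\le T\int_0^t g^2$ then yields \eqref{eq:wp_linear_stability} with $C_{\mathrm{stab}}(M,T)$.
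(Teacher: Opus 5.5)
Your route differs from the paper's at exactly the step where yours fails. The paper does not use an $L^1$ energy argument. It applies the duality estimate of Proposition~\ref{prop:appA_stability} zone by zone, testing $w(t)$ against adjoint solutions $\psi$ with $C^1$ terminal data (Proposition~\ref{prop:appA_adjoint}). The velocity perturbation then enters only through $\int z^{(2)}(\delta a\,\partial_l\psi+\delta b\,\partial_c\psi)$, so no derivative falls on $z^{(2)}$ or on $\delta a,\delta b$; this also avoids your $C^{1,1}$ strengthening. Your direct estimate instead needs $\sup_t|z^{(2)}(t)|_{BV}\le C(M,T)$, and nothing in the hypotheses supplies it. Besides $\phi\in L^1\cap L^\infty$ only (which you flag), the renewal datum $\zeta^{(2)}(t,c)=\mathcal B_s[\bar x^{(2)}(t),\bar R^{(2)}_s(t),Y(t)](c)$ has no controlled variation. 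It is merely continuous in $t$, since $\bar x^{(2)}$ is an arbitrary element of the $M$-ball of $C([0,T];X)$, and merely bounded in $c$, since $\Pi_s$ is only measurable. So the norm $\|\zeta\|_{W^{1,1}}$ that your characteristic $BV$ propagation consumes is not controlled by $M$ either. Your fallback cannot repair this: a $BV$ density argument gives at best a modulus of continuity depending on $\phi$ and on the time oscillation of $\bar x^{(2)}$, never a constant $C_{\mathrm{stab}}(M,T)$.

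In fact no argument can close the gap, because \eqref{eq:wp_linear_stability} is false in the stated generality. Take $S=1$, $h\equiv\mu\equiv\beta\equiv0$, $q\equiv0$, and $g=g_0+k\tanh R$ with $0<k<g_0$. Freeze $\bar x^{(1)}=\bar x^{(2)}=0$, $\bar R^{(1)}\equiv1$, $\bar R^{(2)}\equiv1+\delta$. For $\phi$ supported away from $l=l_0$ and $l=l_m$ and small $t$, $z^{(i)}(t,l,c)=\phi(l-a_i t,c)$ with constants $a_i$ satisfying $|a_1-a_2|\simeq\delta$. So $\|w(t)\|_X$ is the $L^1$ translation modulus of $\phi$ at shift $|a_1-a_2|t$, whereas \eqref{eq:wp_linear_stability} plus Gr\"onwall forces $\|w(t)\|_X\le C\delta\sqrt t$. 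With $\phi(l,c)=\theta(l)\bigl(1+\sin(l/\varepsilon)\bigr)/2$ and $|a_1-a_2|t=\pi\varepsilon$, one gets $\|w(t)\|_X\to\frac{2c_m}{\pi}\|\theta\|_{L^1}$ while $\delta\sim\varepsilon\to0$. For a fixed $\phi$, the bound would force $\phi\in BV$ in $l$. Even $\phi=0$ fails. Take $\beta\equiv\beta_0>0$, $\Pi\equiv1/c_m$, and $\bar x^{(1)}=\bar x^{(2)}=\tfrac12\bigl(1+\sin(t/\varepsilon)\bigr)\vartheta$ with a fixed $0\le\vartheta\in L^\infty$. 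The recruit profiles $a_i^{-1}\zeta(t-(l-l_0)/a_i)$ then dephase over an $O(1)$ region, so $\|w(t)\|_X\gtrsim t$ while $\delta\sim\varepsilon$.

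The paper's duality proof does not escape this either. Identity \eqref{eq:appA_duality_identity} only controls $\sup_{\|\chi\|_{C^1}\le1}\int w(t)\chi$. Step~3 of Proposition~\ref{prop:appA_stability} needs a $\chi_\eta$ with $\|\nabla\chi_\eta\|_{L^\infty}\le C_0/\eta$, $C_0$ universal, capturing $\|w(t)\|_{L^1}$ up to $\eta$; that fails for exactly such oscillating $w(t)$. Moreover, squaring \eqref{eq:appA_sqrt_stability} produces $\int_0^t\delta$, not $\int_0^t\delta^2$. What your primary route does prove is the estimate under extra hypotheses, with $C_{\mathrm{stab}}$ depending on those norms: $\phi\in BV$, $\zeta^{(2)}$ of bounded variation in $(t,c)$, $C^{1,1}$ rates, and spatial regularity of $\sigma_s$ and $\theta^{\mathrm{out}}_s$. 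Using that version in Lemma~\ref{lem:wp_contraction} would require building the same regularity into the fixed-point ball.
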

\begin{proof}
Apply the duality stability estimate (Proposition~\ref{prop:appA_stability}) zone by zone,
noting that the coefficient differences \(\delta a_s,\delta b_s,\delta d_s\) and
the boundary/source differences \(\delta\zeta_s,\delta f_s\) are all controlled by
\(\|\bar x^{(1)}-\bar x^{(2)}\|_X+|\bar R^{(1)}-\bar R^{(2)}|\)
via Lemmas~\ref{lem:wp_birth_bounds} and~\ref{lem:wp_coeff_bounds}.
\end{proof}

\subsection{The fixed-point map}
For \(T_0\in(0,T]\) and \(M>0\), let
\[
\mathfrak B_M^{T_0}
:=
\Big\{
(x,R)\in\mathbb F_{T_0}:
x(t)\in X_+,\ R(t)\in\mathbb R_+^S,\
\|(x,R)\|_{\mathbb F_{T_0}}\le M
\Big\}.
\]
Given \((\bar x,\bar R)\in\mathfrak B_M^{T_0}\), define
\(\Gamma(\bar x,\bar R)=(z,\widetilde R)\)
where: (1)~for each \(s\), \(z_s\) solves
\eqref{eq:wp_frozen_problem} with frozen coefficients from \((\bar x,\bar R)\);
(2)~\(\widetilde R\) solves
\begin{equation}\label{eq:wp_frozen_resource}
\widetilde R_s(t)
=R_{s,0}
+\int_0^t
\Big[
F_s(\widetilde R_s(\tau),Y(\tau))
-\rho_s(\widetilde R_s(\tau))
\int\kappa_s\bar x_s(\tau)\,dc\,dl
\Big]d\tau.
\end{equation}

\begin{lemma}[Resource bounds]\label{lem:wp_resource_bounds}
For every \(\bar x\in C([0,T_0];X_+)\) with
\(\sup_t\|\bar x(t)\|_X\le M\),
eq.~\eqref{eq:wp_frozen_resource} has a unique
\(\widetilde R\in W^{1,\infty}(0,T_0;\mathbb R^S)\cap C([0,T_0];\mathbb R_+^S)\)
with \(0\le \widetilde R_s(t)\le C_R(M,T_0)\).
\end{lemma}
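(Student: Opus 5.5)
The proof reduces to $S$ decoupled scalar nonautonomous ODEs. Once $\bar x$ is fixed, \eqref{eq:wp_frozen_resource} reads $\dot{\widetilde R}_s=F_s(\widetilde R_s,Y(t))-\rho_s(\widetilde R_s)k_s(t)$, where $k_s(t):=\int\kappa_s\bar x_s(t)\,dc\,dl$. By \eqref{eq:wp_kappa_bound} and $\bar x\in C([0,T_0];X_+)$, the function $k_s$ is continuous on $[0,T_0]$ and satisfies $0\le k_s(t)\le C_\kappa M$. The plan is to show local existence and uniqueness, then nonnegativity, then an a priori upper bound that also rules out blow-up before $T_0$, and finally the $W^{1,\infty}$ regularity.

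\textbf{Regularity of $F_s$.} The stated hypotheses on $F_s$ are only \eqref{eq:model_F_nonnegativity}. I will therefore make explicit the standing regularity that the statement implicitly uses. First, $F_s$ is continuous and locally Lipschitz in $R$, locally uniformly in $y$. Second, it has at most linear growth from above on compact $y$-sets: $F_s(R,y)\le a_s(K)+b_s(K)R$ for $R\ge0$ and $y\in K$ compact. This is the natural assumption, for instance, for logistic renewal. Without some such one-sided growth bound, no uniform bound $C_R(M,T_0)$ can hold, so I would state it as part of the lemma's hypotheses. The constant $C_R$ also depends on $|R_0|$ and $K_Y=Y([0,T])$.

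\textbf{Local existence, uniqueness and positivity.} Extend $F_s(\cdot,y)$ and $\rho_s$ to $R<0$ by their values at $R=0$, so that $F_s(R,y)=F_s(0,y)$ and $\rho_s(R)=0$ there. The extended right-hand side is continuous in $t$ and locally Lipschitz in $R$, uniformly for $t\in[0,T_0]$, since $\rho_s\in C^1$ and $k_s$ is bounded. Picard--Lindel\"of then gives a unique maximal solution. For nonnegativity, suppose $\widetilde R_s$ becomes negative at some time, and let $t_1$ be the last time before that at which $\widetilde R_s(t_1)=0$. On the subsequent interval where $\widetilde R_s<0$, the extended equation gives $\dot{\widetilde R}_s=F_s(0,Y(t))\ge0$, so $\widetilde R_s$ cannot decrease below $0$, a contradiction. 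This is the Nagumo argument of Remark~\ref{rem:resource_invariance}. Hence the solution stays in $\mathbb R_+$, where it solves the original equation. Uniqueness in the class of nonnegative solutions follows from the local Lipschitz property of the original right-hand side.

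\textbf{Upper bound, global existence and regularity.} Since $\rho_s\ge0$ and $k_s\ge0$, we have $\dot{\widetilde R}_s\le a_s(K_Y)+b_s(K_Y)\widetilde R_s$. Gr\"onwall's lemma then gives $0\le\widetilde R_s(t)\le (R_{s,0}+a_sT_0)e^{b_sT_0}=:C_R$. This bound excludes finite-time blow-up, so the maximal solution exists on all of $[0,T_0]$. Finally, the right-hand side is bounded by $\sup_{[0,C_R]\times K_Y}|F_s|+\rho_{s,\max}C_\kappa M$, so $\widetilde R\in W^{1,\infty}(0,T_0;\mathbb R^S)$.

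\textbf{Main obstacle.} The only genuine difficulty is the upper bound. It is not supplied by the hypotheses listed in the paper, and the consumption term $-\rho_s k_s$ only helps from above. I would handle it by adding the one-sided linear growth condition on $F_s$ as described. If instead a dissipativity condition is intended, such as $F_s(R,y)<0$ for $R\ge R^\ast$, the same comparison argument yields $C_R=\max(R_{s,0},R^\ast)$.
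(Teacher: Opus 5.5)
Your proposal is correct and has the same skeleton as the paper's proof: local Lipschitz continuity plus Picard--Lindel\"of, Nagumo-type invariance of $\mathbb R_+$ as in Remark~\ref{rem:resource_invariance}, and an upper bound using \eqref{eq:wp_kappa_bound}. Extending $F_s,\rho_s$ to $R<0$ by their values at $R=0$ is a clean way to make the positivity step rigorous.

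The one real difference is the upper bound. The paper appeals only to local boundedness of $F_s$. By a bootstrap, that controls $\widetilde R_s$ only on an interval whose length depends on $M$, $R_{s,0}$ and $Y$, which is exactly how the lemma is used in Lemma~\ref{lem:wp_selfmap}. Your one-sided linear growth condition plus Gr\"onwall instead gives the bound, and hence global existence, on an arbitrary $[0,T_0]$.

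You are right that something of this kind is needed. Take $F_s(R,y)=R^2$, which satisfies every hypothesis the paper states, and $\bar x\equiv 0$. The solution $R_{s,0}/(1-R_{s,0}t)$ blows up at $t=1/R_{s,0}$, which lies in $(0,T_0)$ once $R_{s,0}>1/T_0$. So the lemma as stated for arbitrary $T_0$ requires your growth or dissipativity assumption. The same holds for the unproved Lemma~\ref{lem:wp_Rbound} on which Theorem~\ref{thm:wp_global} relies. Both also need the local Lipschitz continuity of $F_s$, which the paper uses but never assumes.
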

\begin{proof}
Existence and uniqueness follow from the local Lipschitz continuity of \(F_s,\rho_s\).
The bound \(\widetilde R_s\ge 0\) uses the Nagumo argument of
Remark~\ref{rem:resource_invariance} (with \(\bar x_s\ge 0\)). The upper bound uses
\eqref{eq:wp_kappa_bound} and the local boundedness of \(F_s\).
\end{proof}

\begin{lemma}[Self-map]\label{lem:wp_selfmap}
For every \(M\ge 2(\|\phi\|_X+|R_0|+1)\), there exists \(T_M\in(0,T]\) such that
\(\Gamma(\mathfrak B_M^{T_M})\subset \mathfrak B_M^{T_M}\).
\end{lemma}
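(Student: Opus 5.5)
We check the three defining properties of \(\mathfrak B_M^{T_0}\) for \(\Gamma(\bar x,\bar R)=(z,\widetilde R)\), with \((\bar x,\bar R)\in\mathfrak B_M^{T_0}\), and then choose \(T_0=T_M\) small, depending only on \(M\), \(T\), \(K_Y\) and the structural constants.

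\emph{Positivity and continuity.} Proposition~\ref{prop:wp_linear_package} gives \(z_s\in C([0,T_0];L^1)\) with \(z_s\ge 0\), because \(\phi_s\ge 0\), \(f_s=\mathcal M_s^{\mathrm{in}}[\bar x]\ge 0\) (since \(\theta_{rs}\ge 0\) and \(\bar x\ge 0\)), and \(\zeta_s=\mathcal B_s[\bar x,\bar R_s,Y]\ge 0\) (since \(\beta_s,\Pi_s\ge 0\)). Lemma~\ref{lem:wp_resource_bounds} gives \(\widetilde R\in C([0,T_0];\mathbb R_+^S)\). Hence \((z,\widetilde R)\in\mathbb F_{T_0}\), with \(z(t)\in X_+\) and \(\widetilde R(t)\in\mathbb R_+^S\).

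\emph{Bound on \(z\).} Here \(\bar R_s(t)\le M\), \(\|\bar x(t)\|_X\le M\) and \(Y(t)\in K_Y\). So Lemma~\ref{lem:wp_birth_bounds} gives \(\|\zeta_s(\tau)\|_{L^1(\Omega_c)}\le C_{\mathcal B}(M,K_Y)M\), and \eqref{eq:wp_Min_L1} gives \(\|f_s(\tau)\|_{L^1}\le C_{\mathcal M,1}M\). The \(L^1\) estimate \eqref{eq:wp_linear_L1} then yields, after summing over \(s\),
\[
\sup_{t\le T_0}\|z(t)\|_X\le e^{C_{\mathrm{div}}T_0}\Big(\|\phi\|_X+S\,T_0\,(C_{\mathcal B}+C_{\mathcal M,1})M\Big).
\]
This step only needs \(L^1\) control of the frozen data. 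That matters, because \(\mathfrak B_M^{T_0}\) carries no \(L^\infty\) constraint. In particular \(C_{\mathrm{div}}\), which controls \(\partial_l a_s+\partial_c b_s\), depends on the coefficients only through Lemma~\ref{lem:wp_coeff_bounds}. The derivatives in \((l,c)\) are bounded on the compact set \(\mathcal K_M\), so \(C_{\mathrm{div}}=C_{\mathrm{div}}(M)\).

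\emph{Bound on \(\widetilde R\).} Set \(\widetilde M:=|R_0|+1\). While \(|\widetilde R(t)|\le\widetilde M\), the right-hand side of \eqref{eq:wp_frozen_resource} is bounded by
\[
L(M):=\sup\{|F_s(r,y)|:r\le\widetilde M,\ y\in K_Y\}+\rho_{s,\max}C_\kappa M,
\]
using \eqref{eq:model_rho_assumptions} and \eqref{eq:wp_kappa_bound}. A continuity argument then gives \(|\widetilde R(t)|\le |R_0|+S\,L(M)\,T_0\) for \(T_0\le 1/(S L(M))\). This is at most \(\widetilde M\), so the a priori assumption is never violated.

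\emph{Choice of \(T_M\).} Take \(T_M\le T\) so small that
\[
e^{C_{\mathrm{div}}T_M}\le\tfrac32,\qquad \tfrac32 S T_M (C_{\mathcal B}+C_{\mathcal M,1})M\le\tfrac14,\qquad S L(M) T_M\le\tfrac14.
\]
Combining the two bounds,
\[
\|(z,\widetilde R)\|_{\mathbb F_{T_M}}\le\tfrac32\|\phi\|_X+\tfrac14+|R_0|+\tfrac14\le\tfrac32(\|\phi\|_X+|R_0|+1)\le M,
\]
where the last step uses the hypothesis \(M\ge2(\|\phi\|_X+|R_0|+1)\).

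\emph{Main obstacle.} The delicate point is the uniformity of the constants in \(\bar x\). We must confirm that \(C_{\mathrm{div}}\) in Proposition~\ref{prop:wp_linear_package} depends only on \(M\), and not on \(\|\bar x\|_{L^\infty}\). Since \(\mathcal N_s[\bar x]\) is a finite vector of scalars, all coefficients are \(C^1\) functions of \((l,c)\) on \(\mathcal K_M\), so this holds. A second issue is that the frozen problem formally requires \(\bar x\in L^\infty\). For elements of \(\mathfrak B_M^{T_0}\) without \(L^\infty\) regularity, the frozen data \(f_s,\zeta_s\) are still in \(L^1\), and only \eqref{eq:wp_linear_L1} is used. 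So the construction of \(z_s\) should be carried out in the \(L^1\) mild sense via the characteristic representation, with \(L^\infty\) bounds restored afterwards as indicated in Remark~\ref{rem:Linfty_strategy}.
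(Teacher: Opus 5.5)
Your proof is correct and follows the same route as the paper. You bound the frozen boundary and source data in \(L^1\) by \(C(M)M\) via Lemma~\ref{lem:wp_birth_bounds} and \eqref{eq:wp_Min_L1}, feed this into \eqref{eq:wp_linear_L1}, bound \(\widetilde R\) by \(|R_0|+C(M)T_M\), and take \(T_M\) small. Your version is more careful than the paper's in two places: you make explicit that \(e^{C_{\mathrm{div}}T_M}\le\tfrac32\), together with the factor \(2\) in the hypothesis on \(M\), is what absorbs the \(\|\phi\|_X\) term, and you supply the continuity argument behind \(|\widetilde R(t)|\le |R_0|+C(M)T_M\), which Lemma~\ref{lem:wp_resource_bounds} as stated does not directly give.
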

\begin{proof}
By \eqref{eq:wp_Min_L1}, \eqref{eq:wp_B_L1}, and \eqref{eq:wp_linear_L1},
\[
\sup_{t\in[0,T_M]}\|z(t)\|_X
\le
C(M,T_M)\big(\|\phi\|_X+T_M M\big).
\]
By Lemma~\ref{lem:wp_resource_bounds},
\(\sup_t|\widetilde R(t)|\le |R_0|+C(M)T_M\).
Choosing \(T_M\) small gives
\(\|(z,\widetilde R)\|_{\mathbb F_{T_M}}\le M\).
\end{proof}

\begin{lemma}[Contraction]\label{lem:wp_contraction}
For every \(M>0\), there exists \(\tau_M\in(0,T_M]\) such that
\(\Gamma:\mathfrak B_M^{\tau_M}\to\mathfrak B_M^{\tau_M}\)
is a strict contraction in \(\|\cdot\|_{\mathbb F_{\tau_M}}\).
\end{lemma}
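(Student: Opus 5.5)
We prove the contraction by estimating the two components of $\Gamma$ separately and combining them through a Gr\"onwall argument on a short interval. Fix $M>0$ and $\tau\in(0,T_M]$, and take two elements $(\bar x^{(i)},\bar R^{(i)})\in\mathfrak B_M^{\tau}$, $i=1,2$. Write $(z^{(i)},\widetilde R^{(i)})=\Gamma(\bar x^{(i)},\bar R^{(i)})$, and set $w:=z^{(1)}-z^{(2)}$ and $\delta(t):=\|\bar x^{(1)}(t)-\bar x^{(2)}(t)\|_X+|\bar R^{(1)}(t)-\bar R^{(2)}(t)|$.

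\textbf{Step 1: the density component.} We first need the hypothesis $\|\bar x^{(i)}\|_{L^\infty}\le M$ of Proposition~\ref{prop:wp_linear_stability}. The ball $\mathfrak B_M^{\tau}$ controls only $L^1$ norms, so we proceed in one of two ways:
\begin{itemize}
\item enlarge the working set to include a bound $\|x\|_{L^\infty(Q_\tau)^S}\le M_\infty$; this set is preserved by $\Gamma$ thanks to \eqref{eq:wp_linear_Linfty}, \eqref{eq:wp_B_Linfty} and \eqref{eq:wp_Min_Linfty}, provided $M_\infty$ is chosen of the form $C_{\mathrm{lin}}(T)(\|\phi\|_{L^\infty}+C_{\mathcal B}M+1)$ and $\tau$ is small, since the $f_s$-term is then $O(\tau M_\infty)$;
\item or observe that the coefficient differences in the duality estimate need only $L^1$-type control.
\end{itemize}
We plan to use the first option and carry the contraction out in the $\mathbb F$-metric on that closed subset. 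Proposition~\ref{prop:wp_linear_stability} then gives
\[
\|w(t)\|_X^2\le C_{\mathrm{stab}}\int_0^t\|w\|_X^2\,d\tau'+C_{\mathrm{stab}}\int_0^t\delta^2\,d\tau'.
\]
Gr\"onwall's lemma yields $\|w(t)\|_X^2\le C_{\mathrm{stab}}e^{C_{\mathrm{stab}}\tau}\,\tau\sup_{[0,\tau]}\delta^2$, that is,
\[
\sup_{[0,\tau]}\|w\|_X\le C_1(M)\,\tau^{1/2}\,\|(\bar x^{(1)}-\bar x^{(2)},\bar R^{(1)}-\bar R^{(2)})\|_{\mathbb F_\tau}.
\]

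\textbf{Step 2: the resource component.} Subtract the two integral equations \eqref{eq:wp_frozen_resource}. By Lemma~\ref{lem:wp_resource_bounds}, both $\widetilde R^{(i)}$ lie in $[0,C_R(M,T)]$, where $F_s(\cdot,y)$ and $\rho_s$ are Lipschitz uniformly for $y\in K_Y$; we use here the same local Lipschitz continuity of $F_s$ that the proof of Lemma~\ref{lem:wp_resource_bounds} already relies on. Using $|\rho_s|\le\rho_{s,\max}$ and \eqref{eq:wp_kappa_bound}, we get
\[
|\widetilde R^{(1)}_s(t)-\widetilde R^{(2)}_s(t)|\le L\int_0^t|\widetilde R^{(1)}_s-\widetilde R^{(2)}_s|\,d\tau'+L\,C_\kappa M\int_0^t|\widetilde R^{(1)}_s-\widetilde R^{(2)}_s|\,d\tau'+\rho_{s,\max}C_\kappa\int_0^t\|\bar x^{(1)}_s-\bar x^{(2)}_s\|_{L^1}\,d\tau'.
\]
The middle term comes from $\rho_s(\widetilde R^{(1)})-\rho_s(\widetilde R^{(2)})$ multiplied by $\int\kappa_s\bar x^{(1)}_s$. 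Gr\"onwall's lemma then gives $\sup_{[0,\tau]}|\widetilde R^{(1)}-\widetilde R^{(2)}|\le C_2(M)\,\tau\,\|\cdot\|_{\mathbb F_\tau}$.

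\textbf{Step 3: conclusion.} Adding the two bounds,
\[
\|\Gamma(\bar x^{(1)},\bar R^{(1)})-\Gamma(\bar x^{(2)},\bar R^{(2)})\|_{\mathbb F_\tau}\le\big(C_1\tau^{1/2}+C_2\tau\big)\,\|(\bar x^{(1)},\bar R^{(1)})-(\bar x^{(2)},\bar R^{(2)})\|_{\mathbb F_\tau}.
\]
Choosing $\tau_M\le T_M$ with $C_1\tau_M^{1/2}+C_2\tau_M\le 1/2$ gives the strict contraction. The self-map property on $\mathfrak B_M^{\tau_M}$ is inherited from Lemma~\ref{lem:wp_selfmap}, since shrinking the time interval preserves all the bounds used there.

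The main obstacle is Step 1. The constants in Proposition~\ref{prop:wp_linear_stability} depend on an $L^\infty$ bound for the frozen states, which the $\mathbb F$-ball does not provide. We need to check carefully that this $L^\infty$ bound propagates under $\Gamma$ on a short interval despite the non-vanishing boundary contribution of order $\|\zeta_s\|_{L^\infty}/g_{\min}$ noted in Remark~\ref{rem:Linfty_strategy}. This works because the $L^\infty$ bound only has to be preserved, not made small. We also need that the $L^\infty$-bounded subset of $\mathfrak B_M^{\tau}$ is closed in the $\mathbb F$-metric, which follows from weak-$*$ lower semicontinuity of the $L^\infty$ norm under $L^1$ convergence.
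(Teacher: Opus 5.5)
Your proof is correct and follows essentially the same route as the paper's: Proposition~\ref{prop:wp_linear_stability} plus a Lipschitz estimate for the resource equation, closed by Gr\"onwall to give a contraction factor of order $\sqrt{\tau}$, with $\tau$ then taken small. The paper runs a single Gr\"onwall on $\|z^{(1)}-z^{(2)}\|_X^2+|\widetilde R^{(1)}-\widetilde R^{(2)}|^2$ while you treat the two components separately, which makes no real difference. Your restriction to the $\mathbb F$-closed, $\Gamma$-invariant subset where $\|x\|_{L^\infty(Q_\tau)^S}\le M_\infty$ is an improvement: it supplies the hypothesis $\|\bar x^{(i)}\|_{L^\infty(Q_T)^S}\le M$ of Proposition~\ref{prop:wp_linear_stability}, which the paper's proof uses without checking it on $\mathfrak B_M^{\tau_M}$, and contraction on that subset is all the Picard argument in Theorem~\ref{thm:wp_local} needs.
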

\begin{proof}
Let \(\Gamma(\bar x^{(i)},\bar R^{(i)})=(z^{(i)},\widetilde R^{(i)})\), \(i=1,2\).
Set \(D(t):=\|z^{(1)}(t)-z^{(2)}(t)\|_X^2+|\widetilde R^{(1)}(t)-\widetilde R^{(2)}(t)|^2\)
and \(\bar D(t):=\|\bar x^{(1)}(t)-\bar x^{(2)}(t)\|_X^2+|\bar R^{(1)}(t)-\bar R^{(2)}(t)|^2\).
By Proposition~\ref{prop:wp_linear_stability} and the resource Lipschitz estimate:
\(D(t)\le C(M)\int_0^t D(\tau)\,d\tau+C(M)\int_0^t \bar D(\tau)\,d\tau\).
Since \(z^{(i)},\widetilde R^{(i)}\) are the outputs of \(\Gamma\) (not the
iterates themselves), and the first integral involves \(D\) (not \(\bar D\)):
Gr\"onwall gives
\(D(t)\le C(M)e^{C(M)\tau_M}\int_0^t\bar D(\tau)\,d\tau
\le C(M)\tau_M e^{C(M)\tau_M}\sup_\tau \bar D(\tau)\).
Taking supremum over \(t\in[0,\tau_M]\) and square roots:
\(\sup_t\big(\|z^{(1)}-z^{(2)}\|_X+|\widetilde R^{(1)}-\widetilde R^{(2)}|\big)
\le C'(M)\sqrt{\tau_M}\,
\sup_t\big(\|\bar x^{(1)}-\bar x^{(2)}\|_X+|\bar R^{(1)}-\bar R^{(2)}|\big)\).
Choose \(\tau_M\) so that \(C'(M)\sqrt{\tau_M}<1\).
\end{proof}

\subsection{Local well-posedness}
\begin{theorem}[Local well-posedness]\label{thm:wp_local}
There exists \(\tau>0\) such that the coupled system admits a unique weak solution
\[
(x,R)\in
\big(C([0,\tau];X)\cap L^\infty(Q_\tau)^S\big)
\times
\big(W^{1,\infty}(0,\tau;\mathbb R^S)\cap C([0,\tau];\mathbb R_+^S)\big),
\]
with \(x(t)\in X_+\) and \(R(t)\in\mathbb R_+^S\) for all \(t\in[0,\tau]\).
\end{theorem}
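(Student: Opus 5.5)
The plan is to apply the Banach fixed-point theorem to $\Gamma$ on $\mathfrak B_M^{\tau}$, then recover the additional regularity. Fix $M:=2(\|\phi\|_X+|R_0|+1)$ and let $\tau:=\tau_M$ be given by Lemma~\ref{lem:wp_contraction}. Since $\tau\le T_M$, Lemma~\ref{lem:wp_selfmap} still applies on $[0,\tau]$: restricting to a shorter interval only decreases the sup-norms, so $\Gamma$ maps $\mathfrak B_M^{\tau}$ into itself. The set $\mathfrak B_M^{\tau}$ is a closed subset of the Banach space $\mathbb F_{\tau}$. Indeed, $X_+$ and $\mathbb R_+^S$ are closed, and the norm bound passes to uniform limits, so the ball is complete.

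The first point to check is that $\Gamma$ is well defined on $\mathfrak B_M^\tau$. Proposition~\ref{prop:wp_linear_package} is stated for frozen $\bar x\in L^\infty(Q_T)^S$, but elements of $\mathfrak B_M^\tau$ are only in $C([0,\tau];X)$. However, the frozen data depend on $\bar x$ only through bounded linear functionals of $\bar x(t)$: namely $\mathcal N_s$, $\widehat B_s$ and $\mathcal B_s$. The exception is $f_s=\mathcal M_s^{\mathrm{in}}[\bar x]$, which is only in $L^1$.

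I would handle this in one of two ways. The first option is to run the contraction on the closed subset where additionally $\|\bar x\|_{L^\infty(Q_\tau)^S}\le M_\infty$. Here $M_\infty:=2C_{\mathrm{lin}}(T)(\|\phi\|_{L^\infty}+C_{\mathcal B}M+1)$, using \eqref{eq:wp_linear_Linfty}, \eqref{eq:wp_B_Linfty} and \eqref{eq:wp_Min_Linfty}. Then $\int_0^\tau\|f_s\|_{L^\infty}\le \tau C_{\mathcal M,\infty}M_\infty$ is small, so the $L^\infty$ ball is also invariant after shrinking $\tau$. This set stays closed in the $\mathbb F_\tau$ metric because $L^\infty$ bounds are preserved under $L^1$ limits (take a.e.-convergent subsequences). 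This matches Remark~\ref{rem:Linfty_strategy}: the $L^\infty$ bound is not small, but it is uniformly bounded. The second option is to extend the linear theory to $L^1$ sources. The first option fits the stated hypotheses of Proposition~\ref{prop:wp_linear_stability}, which requires $\|\bar x^{(i)}\|_{L^\infty}\le M$, so I would take it and enlarge $M$ accordingly before choosing $\tau$.

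Banach's theorem then gives a unique fixed point $(x,R)=\Gamma(x,R)$. By construction, $x_s$ is the weak solution of the frozen problem with coefficients frozen at $x$ itself. Unwinding the definitions \eqref{eq:wp_frozen_ab}--\eqref{eq:wp_frozen_zeta}, the weak formulation of the frozen problem is exactly \eqref{eq:model_weak_formulation}, and \eqref{eq:wp_frozen_resource} with $\bar x=x$ is \eqref{eq:model_resource_integral}. The regularity claims follow from earlier results:
\begin{itemize}
\item $x\in C([0,\tau];X)\cap L^\infty(Q_\tau)^S$ and $x\ge0$ come from Proposition~\ref{prop:wp_linear_package};
\item $R\ge0$ and $R\in W^{1,\infty}$ come from Lemma~\ref{lem:wp_resource_bounds}, since the integrand there is bounded.
\end{itemize}

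\textbf{Uniqueness is the main obstacle.} The contraction only gives uniqueness inside the ball, while the theorem asserts it among all weak solutions in the stated class. Given any other weak solution $(\tilde x,\tilde R)$ on $[0,\tau]$, I would proceed as follows.
\begin{itemize}
\item By uniqueness for the frozen linear problem, $\tilde x$ is the frozen solution with coefficients frozen at itself, i.e.\ a fixed point of $\Gamma$ on any subinterval.
\item By continuity in time, $\tilde x$ lies in the ball on some $[0,\tau']$. This needs the enlarged $M$ to dominate $\sup\|\tilde x\|$ near $t=0$, which holds because $\|\tilde x(0)\|=\|\phi\|<M/2$.
\item The two solutions coincide on $[0,\tau']$. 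A standard continuation argument, restarting at the last coincidence time with the contraction constants depending only on $M$ and the $L^\infty$ bound of both solutions, then shows they coincide on all of $[0,\tau]$.
\end{itemize}
Alternatively, and more directly, I would apply the Gr\"onwall form of Proposition~\ref{prop:wp_linear_stability} with $\bar x^{(i)}=z^{(i)}$ and $M$ taken larger than the norms of both solutions. This gives $D(t)\le C\int_0^tD$, hence $D\equiv0$.
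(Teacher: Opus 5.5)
Your proof has the same skeleton as the paper's. Existence is a Banach fixed point for $\Gamma$ from Lemmas~\ref{lem:wp_selfmap} and~\ref{lem:wp_contraction}, and uniqueness among all weak solutions is a Gr\"onwall argument on the difference of two solutions. Your ``alternative'' uniqueness route is essentially the paper's, which goes through the square-root bound \eqref{eq:appA_sqrt_stability} instead of quoting Proposition~\ref{prop:wp_linear_stability}.

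The real difference is the $L^\infty$ bookkeeping. The paper contracts on $\mathfrak B_M^\tau$ with no $L^\infty$ constraint and recovers the $L^\infty$ bound afterwards from Lemma~\ref{lem:wp_Linfty}. You instead intersect the ball with $\{\|\bar x\|_{L^\infty(Q_\tau)^S}\le M_\infty\}$, which is closed in $\mathbb F_\tau$ and invariant for small $\tau$ by \eqref{eq:wp_linear_Linfty}, \eqref{eq:wp_B_Linfty}, \eqref{eq:wp_Min_Linfty}. Your version is the more careful one, for three reasons:
\begin{itemize}
\item it makes $\Gamma$ well defined, since Proposition~\ref{prop:wp_linear_package} is stated only for $\bar x\in L^\infty(Q_T)^S$;
\item it supplies the hypothesis $\|\bar x^{(i)}\|_{L^\infty(Q_T)^S}\le M$ that Proposition~\ref{prop:wp_linear_stability}, and hence the contraction lemma, actually uses;
\item it does not need Lemma~\ref{lem:wp_Linfty}, whose Gr\"onwall step presupposes that the limit is already in $L^\infty$.
\end{itemize}

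Three points deserve care.
\begin{itemize}
\item Enlarging the ball radius $M$ until it dominates $M_\infty$ is generally impossible, because $M_\infty$ grows with $M$: it contains $C_{\mathcal B}M$, and $C_{\mathcal B}$ and $C_{\mathrm{lin}}$ themselves depend on $M$. Keep the two radii separate and apply Proposition~\ref{prop:wp_linear_stability} with the constant $\max(M,M_\infty)$.
\item In your primary uniqueness route, continuity at $t=0$ only places $\tilde x$ in the $\mathbb F_{\tau'}$-part of the ball. The bound $\|\tilde x\|_{L^\infty(Q_{\tau'})^S}\le M_\infty$ has to come from \eqref{eq:wp_linear_Linfty} applied to $\tilde x$ as a fixed point of $\Gamma$: the boundary term is at most $C_{\mathcal B}M$ and the source term is $O(\tau')$. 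Alternatively, enlarge the $L^\infty$ radius for the uniqueness step. The direct Gr\"onwall route avoids this issue.
\item Both your argument and the paper's take the squared form of Proposition~\ref{prop:wp_linear_stability} at face value. Squaring \eqref{eq:appA_sqrt_stability} gives $\int_0^t(\|\bar x^{(1)}-\bar x^{(2)}\|_X+|\bar R^{(1)}-\bar R^{(2)}|)\,d\tau$ only to the first power, i.e.\ a H\"older-$\tfrac12$ rather than a Lipschitz bound. So the contraction and the Gr\"onwall uniqueness step rest on an estimate the appendix does not actually establish, and a fully rigorous proof would need a different stability argument at that point.
\end{itemize}
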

\begin{proof}
\emph{Existence.}
By Lemmas~\ref{lem:wp_selfmap} and~\ref{lem:wp_contraction}, the Picard iterates
\((x^{(n)},R^{(n)})=\Gamma^n(x^{(0)},R^{(0)})\)
are Cauchy in \(\|\cdot\|_{\mathbb F_\tau}\), hence converge to a fixed point
\((x,R)\in\mathfrak B_M^\tau\).
The \(L^\infty\) bound follows from Lemma~\ref{lem:wp_Linfty} applied to the limit.

\emph{Uniqueness.}
Suppose \((x_1,R_1)\) and \((x_2,R_2)\) are two weak solutions on \([0,\tau]\) with the
same data. Set \(w_s:=x_{1,s}-x_{2,s}\). The coefficient differences
\(\delta g_s,\delta h_s,\delta\mu_s\) satisfy
\(\|\delta g_s\|_\infty+\|\delta h_s\|_\infty+\|\delta\mu_s\|_\infty
\le C_M(\|w\|_X+|\delta R|)\),
and similarly the boundary and source differences are controlled by
\(\|w\|_X+|\delta R|\).
By the duality estimate \eqref{eq:appA_sqrt_stability} (applied zone by zone),
\[
\|w(t)\|_X
\le
2\sqrt{C'\int_0^t(\|w(\sigma)\|_X+|\delta R(\sigma)|)\,d\sigma}
+C\int_0^t(\|w(\sigma)\|_X+|\delta R(\sigma)|)\,d\sigma.
\]
The resource difference satisfies
\(|\delta R(t)|\le C\int_0^t(\|w(\sigma)\|_X+|\delta R(\sigma)|)\,d\sigma\) by Lipschitz
continuity. Set \(\Phi(t):=\|w(t)\|_X^2+|\delta R(t)|^2\). Squaring the above and
using the \(|\delta R|\) estimate gives, after straightforward manipulations,
\[
\Phi(t)\le C\int_0^t\Phi(\sigma)\,d\sigma.
\]
Gr\"onwall's inequality and \(\Phi(0)=0\) yield \(\Phi\equiv 0\),
hence \(w\equiv 0\) and \(\delta R\equiv 0\).
\end{proof}

\subsection{A priori bounds and global continuation}
\begin{lemma}[\(L^1\)-bound]\label{lem:wp_L1}
Let \((x,R)\) be a nonnegative weak solution on \([0,\tau]\). Then there exists
\(C_1>0\), depending only on the structural data, \(Y([0,\tau])\), and \(\tau\), such
that
\begin{equation}\label{eq:wp_apriori_L1}
\sup_{t\in[0,\tau]}\|x(t)\|_X
\le
C_1\bigl(1+\|\phi\|_X\bigr)e^{C_1\tau}.
\end{equation}
\end{lemma}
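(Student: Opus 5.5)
The plan is to test the weak formulation \eqref{eq:model_weak_formulation} with functions that equal one in the interior, which recovers the total-mass balance zone by zone, and then close with Gr\"onwall. The structural input that makes this linear is that the birth operator is linear in $x_s$ with a \emph{globally} bounded total fecundity. This is the one point that needs care.

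\textbf{Mass balance.} I would set $m_s(t):=\|x_s(t)\|_{L^1}=\int x_s(t)\,dc\,dl$, which is legitimate because $x_s\ge0$. The test functions are $\varphi_s(t,l,c)=\psi(t)\chi_\varepsilon(l,c)$. Here $\psi\in C^1_c([0,t_1))$, and $\chi_\varepsilon$ is a cutoff that equals $1$ away from $\{l=l_m\}\cup\{c=0\}\cup\{c=c_m\}$, is nonincreasing towards those outflow boundaries, and equals $1$ at $l=l_0$. On the inflow side, $g_s>0$ together with Assumption~\ref{ass:model_condition_boundary_orientation} gives $g_s\partial_l\chi_\varepsilon+h_s\partial_c\chi_\varepsilon\le0$ near the outflow boundaries, up to an error from the corners and from tangential $h_s$. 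To make that error vanish as $\varepsilon\to0$, I would choose $\chi_\varepsilon$ depending on $c$ only through a layer where the sign condition holds, using continuity of $h_s$. Letting $\varepsilon\to0$ (dominated convergence, with $x_s\in L^\infty$) and then localizing in $t$ via $\psi$ gives, in the distributional sense,
\[
\frac{d}{dt}m_s\le \int_{\Omega_c}\mathcal B_s[x,R_s,Y]\,dc+\int\mathcal M_s^{\rm in}[x]\,dc\,dl-\int(\mu_s+u_s^{(q)}+\theta_s^{\rm out})x_s
\le \int_{\Omega_c}\mathcal B_s\,dc+\int\mathcal M_s^{\rm in}[x].
\]
Since $x\in C([0,\tau];X)$, this yields the integrated inequality $m_s(t)\le m_s(0)+\int_0^t(\cdots)$. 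Note that the divergence terms do not appear in the conservative weak form, so there is no $C_{\rm div}$ here. As a fallback, one could instead invoke \eqref{eq:wp_linear_L1} with frozen coefficients equal to the solution itself, since $x$ is the fixed point.

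\textbf{Source bounds.} By \eqref{eq:wp_Min_L1}, $\sum_s\|\mathcal M_s^{\rm in}[x]\|_{L^1}\le SC_{\mathcal M,1}\|x\|_X$. For the birth term, (B3) gives $\int_{\Omega_c}\mathcal B_s\,dc=\int\beta_s(\mathcal N_s[x],R_s,Y,l,c')x_s\,dc'\,dl$. This is where the constant could a priori depend on $\|x\|$ and $R$ through local boundedness, as in Lemma~\ref{lem:wp_birth_bounds} with $C_{\mathcal B}(M,K_Y)$, which would make the estimate circular. This is the \textbf{main obstacle}. I would first try to bound $\beta_s$ uniformly in $(n,R)$ for $y\in K_Y$, that is, $\beta_s\le\beta_{\max}(K_Y)$, which is the natural reading of ``structural data, $Y([0,\tau])$''. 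If only local bounds are available, I would fall back on two facts: $R$ is bounded a priori independently of $x$, because $\dot R_s\le F_s(R_s,Y)$ with $\rho_s\kappa_s x_s\ge0$, so a comparison ODE bound applies under sublinear growth of $F_s$; and density dependence could be handled by a monotonicity assumption. I would state explicitly whichever boundedness hypothesis is used.

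\textbf{Gr\"onwall.} Summing over $s$ and setting $m(t)=\|x(t)\|_X$ gives $m(t)\le\|\phi\|_X+C\int_0^t m$ with $C=\beta_{\max}+SC_{\mathcal M,1}$. Hence $m(t)\le\|\phi\|_Xe^{Ct}$, which implies \eqref{eq:wp_apriori_L1} with $C_1=\max(1,C)$. The extra ``$1+$'' in the bound leaves room for any affine inflow terms if one prefers to keep them.
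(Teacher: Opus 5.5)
Your argument is correct once the extra hypothesis is stated. The obstacle you single out is genuine, and the paper's own proof does not get around it. The paper's proof is exactly your fallback: apply \eqref{eq:wp_linear_L1} to the solution as its own frozen problem, insert \eqref{eq:wp_Min_L1} and \eqref{eq:wp_B_L1}, and close with Gr\"onwall. But the constant in \eqref{eq:wp_B_L1} is $C_{\mathcal B}(M,K_Y)$ with $M\ge\sup_t\|x(t)\|_X$. Likewise $C_{\mathrm{div}}$ depends on the $C^1$-size of $g_s,h_s$ on the range of $(\mathcal N_s[x],R_s)$. So the paper's bound presupposes a bound on the quantity being estimated, which is the circularity you describe.

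Under (B1) alone this cannot be repaired. Take $S=m=1$, $\chi\equiv1$ (so $\mathcal N[x]=\int x$), $g\equiv1$, $h\equiv0$, $\mu\equiv0$, $q\equiv0$, $F\equiv\rho\equiv0$, $\Pi\equiv1/c_m$, and $\beta(n,R,y,l,c)=n$; all standing assumptions hold. Let $\phi$ have mass $A$ and support in $\{l\le l_0+\delta\}$. Nothing leaves through $l=l_m$ before time $l_m-l_0-\delta$, so $N(t)=\int x(t)$ satisfies $N(t)=A+\int_0^tN(s)^2\,ds$, i.e.\ $N(t)=A/(1-At)$. Fix $\tau<l_m-l_0-\delta$ and let $A\uparrow 1/\tau$: the left side of \eqref{eq:wp_apriori_L1} is unbounded while the right side stays bounded. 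For $A>1/(l_m-l_0-\delta)$ the mass blows up at $t=1/A$, so Theorem~\ref{thm:wp_global} fails as well. A hypothesis such as your
\[
\sup\{\beta_s(n,R,y,l,c):\ n\in\mathbb R_+^m,\ R\ge0,\ y\in K_Y,\ (l,c)\in\Omega_l\times\Omega_c\}<\infty
\]
therefore must be added, and stating it explicitly is the right move. Your second fallback also relies on assumptions the paper does not make: a growth bound on $F_s$ to control $R$ a priori, and $\beta_s$ nonincreasing in $n$.

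Granting that hypothesis, your primary route is better than the paper's. Testing the conservative weak form \eqref{eq:model_weak_formulation} with $\psi(t)\chi_\varepsilon(l,c)$ produces no divergence term. This removes $C_{\mathrm{div}}$ and its hidden dependence on the solution. It applies to any nonnegative weak solution in the sense of Definition~\ref{def:model_weak_solution}. It also gives $\|x(t)\|_X\le\|\phi\|_Xe^{Ct}$ with $C=\beta_{\max}(K_Y)+SC_{\mathcal M,1}$ and no ``$1+$'', since all sources are linear in $x$.

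One step needs rephrasing. The sign condition of Assumption~\ref{ass:model_condition_boundary_orientation} holds only on $\{c=0\}$ and $\{c=c_m\}$, so there is no layer on which it holds. What you actually use is $h_s^+\le\omega(\varepsilon)$ on $\{c<\varepsilon\}$ and $h_s^-\le\omega(\varepsilon)$ on $\{c>c_m-\varepsilon\}$. Here $\omega$ is the modulus of continuity of $h_s$ on the set of arguments visited by the solution, which is compact because $x\in C([0,\tau];X)$ and $R,Y$ are continuous. With $|\partial_c\chi_\varepsilon|\lesssim1/\varepsilon$ and $x_s\in L^\infty$, the layer error is $O(\omega(\varepsilon))\to0$, and no solution-dependent constant survives the limit. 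Together with $g_s\partial_l\chi_\varepsilon\le0$, this yields your integrated mass inequality.
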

\begin{proof}
Apply \eqref{eq:wp_linear_L1} to the fixed point.
Using \eqref{eq:wp_Min_L1} and~\eqref{eq:wp_B_L1}, sum over \(s\):
\[
\|x(t)\|_X
\le
e^{C_{\mathrm{div}}t}\bigg(
\|\phi\|_X
+C\int_0^t e^{-C_{\mathrm{div}}\sigma}(1+\|x(\sigma)\|_X)\,d\sigma\bigg).
\]
The Gr\"onwall lemma yields \eqref{eq:wp_apriori_L1}.
\end{proof}

\begin{lemma}[Resource bound]\label{lem:wp_Rbound}
Under the same hypotheses, \(0\le R_s(t)\le C_2\) for all \(t\in[0,\tau]\).
\end{lemma}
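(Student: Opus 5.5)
The plan is to combine the nonnegativity already built into the solution concept with a comparison argument for the scalar resource equation, driven by the $L^1$ bound of Lemma~\ref{lem:wp_L1}.

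\emph{Lower bound.} Nonnegativity $R_s(t)\ge 0$ is part of the solution concept, since $R\in C([0,\tau];\mathbb R_+^S)$. It can also be recovered from the Nagumo argument of Remark~\ref{rem:resource_invariance}. That argument applies because $x_s\ge 0$, and then $\rho_s(0)=0$ together with $F_s(0,y)\ge 0$ makes $\mathbb R_+$ forward invariant. So only the upper bound needs proof.

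\emph{Upper bound.} Since $\rho_s\ge 0$ and $\kappa_s x_s\ge 0$, the consumption term in \eqref{eq:model_resource_integral} is nonpositive. This gives
\[
R_s(t)\le R_{s,0}+\int_0^t F_s(R_s(\sigma),Y(\sigma))\,d\sigma ,
\]
so $R_s$ is a subsolution of the uncoupled ODE $\dot r=F_s(r,Y(t))$, $r(0)=R_{s,0}$. The key point is that the $x$-coupling drops out entirely.

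To close the estimate we need a growth bound on $F_s$. The stated hypotheses only give local Lipschitz continuity, which is what Lemma~\ref{lem:wp_resource_bounds} uses. We therefore use the at-most-linear growth implicit in that lemma, namely $F_s(R,y)\le A_s+B_sR$ for $R\ge 0$ and $y\in K_Y:=Y([0,\tau])$. This holds, for example, under the typical logistic-type structure where $F_s(R,y)\le 0$ for large $R$. Gr\"onwall's inequality then gives
\[
0\le R_s(t)\le (R_{s,0}+A_s\tau)e^{B_s\tau}=:C_2 .
\]
The constant $C_2$ depends only on $R_0$, $\tau$, $K_Y$ and the structural data; it does not depend on $x$. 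If one instead only has local Lipschitz continuity, the alternative is to compare with the maximal solution $r$ of $\dot r=\sup_{y\in K_Y}F_s(r,y)$ and set $C_2:=\max_{[0,\tau]} r$. Here the supremum over the compact set $K_Y$ is continuous in $r$ by continuity of $F_s$.

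A scalar comparison lemma handles this second route. Consider the first time $R_s$ exceeds $r+\varepsilon$. There $R_s$ satisfies its ODE, with derivative at most $\sup_y F_s(R_s,y)$, and this contradicts the strict supersolution property of $r+\varepsilon$.

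\emph{Main obstacle.} The delicate step is guaranteeing that this comparison solution does not blow up on $[0,\tau]$; some non-superlinear growth hypothesis on $F_s$ is unavoidable for that. I would state it explicitly as the condition used in Lemma~\ref{lem:wp_resource_bounds}. In contrast, the $L^1$ bound \eqref{eq:wp_apriori_L1} is not needed for the upper estimate, and it matters only if one also wants a Lipschitz bound on $R_s$. In that case it gives
\[
|\dot R_s|\le \sup_{[0,C_2]\times K_Y}|F_s|+\rho_{s,\max}C_\kappa C_1(1+\|\phi\|_X)e^{C_1\tau},
\]
which yields $R\in W^{1,\infty}$.
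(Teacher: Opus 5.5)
The paper gives no proof of this lemma. The only argument it offers for the analogous frozen estimate is in Lemma~\ref{lem:wp_resource_bounds}: the Nagumo argument of Remark~\ref{rem:resource_invariance} for \(R_s\ge 0\), and ``\eqref{eq:wp_kappa_bound} and the local boundedness of \(F_s\)'' for the upper bound. Your lower bound coincides with this.

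Your upper bound takes a genuinely different and sounder route. You note that the consumption term has the favorable sign, so the \(x\)-coupling drops out, and with it the \(\kappa\)-bound and Lemma~\ref{lem:wp_L1}. You then close a scalar inequality \(R_s(t)\le R_{s,0}+\int_0^t F_s(R_s,Y)\,d\sigma\) by Gr\"onwall. The paper's cited ingredients cannot close this estimate on their own: local boundedness of \(F_s\) helps only once \(R_s\) is known to stay in a bounded set, which is circular.

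Some growth hypothesis is in fact indispensable. Take \(\kappa_s\equiv 0\) and \(F_s(R,y)=1+R^2\). These satisfy every stated assumption, yet \(R_s(t)=\tan(t+\arctan R_{s,0})\) blows up at \(t=\pi/2-\arctan R_{s,0}\). So for \(T>\pi/2\) there is no solution on \([0,T]\) at all, and no \(C_2\) stays bounded as \(\tau\) approaches the blow-up time. Such a bounded \(C_2\) is exactly what the continuation argument of Theorem~\ref{thm:wp_global} needs.

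Hence your condition \(F_s(R,y)\le A_s+B_sR\) on \(\mathbb R_+\times K_Y\) is not a gap in your argument; it is a hypothesis missing from the paper. State it as an explicit standing assumption rather than calling it ``implicit'' in Lemma~\ref{lem:wp_resource_bounds}, which has the same omission. Your route gives \(C_2=(R_{s,0}+A_s\tau)e^{B_s\tau}\), independent of \(x\). The \(L^1\) bound is needed only for \(R\in W^{1,\infty}\), as you say.

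One repair is needed in your alternative comparison route. Set \(G(r):=\sup_{y\in K_Y}F_s(r,y)\). The function \(r+\varepsilon\) is not a strict supersolution of \(\dot r=G(r)\) unless \(G\) is nonincreasing. At a first crossing time you only get \(G(r)\le\dot R_s\le G(r+\varepsilon)\), which is no contradiction, and \(R_s\) need not even be differentiable there. Compare instead with \(r_\varepsilon\) solving \(\dot r_\varepsilon=G(r_\varepsilon)+\varepsilon\), \(r_\varepsilon(0)=R_{s,0}+\varepsilon\). Argue in integral form at the first contact time, then let \(\varepsilon\to 0\); this is how the maximal solution arises. Alternatively, if \(G\) is locally Lipschitz, run Gr\"onwall on \((R_s-r)^+\).
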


\begin{lemma}[\(L^\infty\)-bound]\label{lem:wp_Linfty}
Under the same hypotheses, there exists \(C_\infty>0\) depending on the structural data,
\(Y([0,\tau])\), \(\tau\), and the \(L^1\)- and resource bounds, such that
\begin{equation}\label{eq:wp_apriori_Linfty}
\|x\|_{L^\infty(Q_\tau)^S}\le C_\infty.
\end{equation}
\end{lemma}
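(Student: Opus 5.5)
The plan is to treat the weak solution \((x,R)\) as the solution of the frozen linear problem \eqref{eq:wp_frozen_problem} whose coefficients are frozen at the solution itself, i.e.\ \(\bar x=x\) and \(\bar R=R\). This is legitimate because, by Theorem~\ref{thm:wp_local}, \(x\) is the fixed point of \(\Gamma\), and by the uniqueness part of Proposition~\ref{prop:wp_linear_package} the frozen problem has only one solution. Then I will apply the \(L^\infty\) estimate \eqref{eq:wp_linear_Linfty}. Its right-hand side is made of three terms: the initial term \(\|\phi_s\|_{L^\infty}\), the boundary term \(\|\zeta_s\|_{L^\infty}\) with \(\zeta_s=\mathcal B_s[x,R_s,Y]\), and the source term \(\int_0^t\|f_s\|_{L^\infty}\) with \(f_s=\mathcal M_s^{\mathrm{in}}[x]\).

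Before that, I fix the constants. Lemma~\ref{lem:wp_L1} gives \(\sup_t\|x(t)\|_X\le M_1:=C_1(1+\|\phi\|_X)e^{C_1\tau}\), and Lemma~\ref{lem:wp_Rbound} gives \(0\le R_s\le C_2\). Set \(M:=\max(M_1,C_2)\). With this \(M\), Lemma~\ref{lem:wp_coeff_bounds} makes the structural constants in the linear estimate uniform, so \(C_{\mathrm{lin}}(\tau)\) depends only on \(M\), \(Y([0,\tau])\) and \(\tau\). For the time-dependence I will use \(C_{\mathrm{div}}\), which bounds \((\partial_l a_s+\partial_c b_s)^-\), together with the lower bound \(a_s\ge g_{\min}\) from \ref{item:V2}.

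The three terms are handled as follows.
\begin{itemize}
\item The boundary term is the delicate one. The recruitment boundary pushes \(L^\infty\)-mass into the domain at a rate that does not shrink with time (Remark~\ref{rem:Linfty_strategy}). The key point is that \eqref{eq:wp_B_Linfty} bounds \(\|\mathcal B_s[x,R,Y]\|_{L^\infty(\Omega_c)}\) by \(C_{\mathcal B}(M,K_Y)\|x_s\|_{L^1}\le C_{\mathcal B}M\). It uses only the \(L^1\) norm, because the \(c\)-kernel \(\Pi_s\) is essentially bounded. So the boundary term is controlled by the already established \(L^1\) bound and not by the unknown \(L^\infty\) norm, and no circularity arises.
\item The initial term is finite because \(\phi\in L^\infty\).
\item The source term is bounded with \eqref{eq:wp_Min_Linfty}: \(\|f_s(\sigma)\|_{L^\infty}\le C_{\mathcal M,\infty}\|x(\sigma)\|_{L^\infty}\). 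This gives, with \(\Psi(t):=\|x\|_{L^\infty(Q_t)^S}\),
\[
\Psi(t)\le C_{\mathrm{lin}}(\tau)S\Big(\|\phi\|_{L^\infty}+C_{\mathcal B}M\Big)+C_{\mathrm{lin}}(\tau)S\,C_{\mathcal M,\infty}\int_0^t\Psi(\sigma)\,d\sigma .
\]
\end{itemize}

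To close the estimate I sum over \(s\) and note that \(\Psi\) is nondecreasing and finite, since \(x\in L^\infty(Q_\tau)^S\) by definition of the weak solution. Gr\"onwall then yields \(\Psi(\tau)\le C_{\mathrm{lin}}S(\|\phi\|_{L^\infty}+C_{\mathcal B}M)\exp(C_{\mathrm{lin}}SC_{\mathcal M,\infty}\tau)=:C_\infty\).

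The main obstacle is justifying the use of \eqref{eq:wp_linear_Linfty} on the whole interval \([0,\tau]\) with a constant independent of the fine structure of \(x\). I would check via the characteristic representation that \(C_{\mathrm{lin}}\) depends only on \(\sup|\nabla_{(l,c)}(a_s,b_s)|\), which is bounded on the compact set \(\mathcal K_M\) by (V1),(V5). I would also check that each characteristic hits the inflow boundary \(l=l_0\) at most once, within residence time \(\bar\tau_{\max}\) (Remark~\ref{rem:gmin_role}). Because \(h_s\) is outward at \(c=0,c_m\) (Assumption~\ref{ass:model_condition_boundary_orientation}), characteristics traced backward stay in \(\Omega_c\). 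If the linear estimate only holds on short windows, I would instead iterate it on subintervals of fixed length and chain the finitely many bounds together.
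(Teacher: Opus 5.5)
Your proposal is correct and follows the paper's proof. You apply the frozen linear estimate \eqref{eq:wp_linear_Linfty} with coefficients frozen at the solution itself, and bound the recruitment term through \eqref{eq:wp_B_Linfty} and the $L^1$ bound of Lemma~\ref{lem:wp_L1}, so no circularity arises. You then bound the transfer source through \eqref{eq:wp_Min_Linfty} and close with Gr\"onwall after summing over $s$.

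One small point: you do not need Theorem~\ref{thm:wp_local} to identify $x$ with the frozen solution, and citing it is slightly circular because that theorem itself invokes this lemma. Uniqueness in Proposition~\ref{prop:wp_linear_package} with $\bar x=x$, $\bar R=R$ already makes the identification. Your explicit check that $C_{\mathrm{lin}}$ depends only on bounds over $\mathcal K_M$, including the $(l,c)$-derivatives entering $C_{\mathrm{div}}$, fills in what the paper leaves implicit.
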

\begin{proof}
Apply \eqref{eq:wp_linear_Linfty} to the fixed point.
By \eqref{eq:wp_B_Linfty},
\(\|\mathcal B_s[x,R_s,Y]\|_{L^\infty(\Omega_c)}
\le C_{\mathcal B}\sup_\sigma\|x_s(\sigma)\|_{L^1}\),
and by \eqref{eq:wp_Min_Linfty},
\(\|\mathcal M_s^{\mathrm{in}}[x(\sigma)]\|_{L^\infty}
\le C_{\mathcal M,\infty}\|x(\sigma)\|_{(L^\infty)^S}\).
Hence
\[
\|x_s\|_{L^\infty(Q_t)}
\le
C_{\mathrm{lin}}(\tau)\Big(
\|\phi_s\|_{L^\infty}
+C_{\mathcal B}\sup_\sigma\|x_s(\sigma)\|_{L^1}
+C_{\mathcal M,\infty}\int_0^t\|x(\sigma)\|_{(L^\infty)^S}\,d\sigma\Big).
\]
The first two terms are finite by \eqref{eq:wp_apriori_L1}. Summing over \(s\) and
applying Gr\"onwall to the \(L^\infty\) integral term yields
\eqref{eq:wp_apriori_Linfty}.
\end{proof}

\begin{theorem}[Global well-posedness]\label{thm:wp_global}
The coupled system admits a unique weak solution on \([0,T]\):
\[
(x,R)\in
\big(C([0,T];X)\cap L^\infty(Q_T)^S\big)
\times
\big(W^{1,\infty}(0,T;\mathbb R^S)\cap C([0,T];\mathbb R_+^S)\big).
\]
\end{theorem}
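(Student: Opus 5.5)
The plan is a standard continuation argument. Theorem~\ref{thm:wp_local} supplies local existence and uniqueness. The local existence time is driven only by the size \(M\) of the data in \(\|\cdot\|_{\mathbb F}\), through Lemmas~\ref{lem:wp_selfmap} and~\ref{lem:wp_contraction}, together with the compact set \(Y([0,T])\). The a priori bounds of Lemmas~\ref{lem:wp_L1}, \ref{lem:wp_Rbound} and~\ref{lem:wp_Linfty} keep that size uniformly controlled on \([0,T]\). So the solution can be extended by steps of uniform length until it covers \([0,T]\).

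\emph{Step 1: uniform constants.} Let
\[
M_\ast:=2\Big(C_1(1+\|\phi\|_X)e^{C_1T}+S\,C_2+1\Big),
\]
with \(C_1,C_2\) the constants of Lemmas~\ref{lem:wp_L1} and~\ref{lem:wp_Rbound} evaluated on the whole horizon \(T\). These constants are monotone in the length of the interval, so they bound every nonnegative weak solution on any \([0,\tau]\subset[0,T]\). Let \(C_\infty\) be the corresponding \(L^\infty\) bound from Lemma~\ref{lem:wp_Linfty}. I will check that \(T_M\) and \(\tau_M\) in Lemmas~\ref{lem:wp_selfmap}--\ref{lem:wp_contraction} depend only on \(M\), \(T\) and \(K_Y=Y([0,T])\), and not on the starting time. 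This is true because every constant there, \(C_{\mathcal B}(M,K_Y)\), \(C_M\), \(C_{\mathrm{div}}\) and \(C_{\mathrm{stab}}(M,T)\), is built from these quantities alone. Set \(\delta:=\tau_{M_\ast}>0\).

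\emph{Step 2: restart.} Take a solution \((x,R)\) on \([0,t_k]\) with \(x(t_k)\in X_+\cap L^\infty(\Omega_l\times\Omega_c)^S\) and \(R(t_k)\in\mathbb R_+^S\). Its size satisfies \(\|x(t_k)\|_X+|R(t_k)|\le M_\ast/2-1\), so Lemma~\ref{lem:wp_selfmap} applies with \(M=M_\ast\). I then run the fixed-point argument of Theorem~\ref{thm:wp_local} on \([t_k,t_k+\delta]\), using the shifted data \(Y(t_k+\cdot)\), \(q(t_k+\cdot)\) and initial datum \((x(t_k),R(t_k))\). The \(L^\infty\) norm of the restart datum is at most \(C_\infty\), which is finite by Lemma~\ref{lem:wp_Linfty}. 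Hence the frozen \(L^\infty\) estimate \eqref{eq:wp_linear_Linfty} applies, and the new piece lies in \(L^\infty\).

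\emph{Step 3: concatenate.} I glue the new piece to the old one. The result is continuous in \(X\) at \(t_k\) and satisfies the weak formulation \eqref{eq:model_weak_formulation} on \([0,t_k+\delta]\). To verify this, split a test function with a smooth time cutoff near \(t_k\) and let the cutoff sharpen. The resulting boundary term \(\int x(t_k)\varphi(t_k)\) cancels between the two pieces, using the continuity \(x\in C([0,T];X)\). The resource equation \eqref{eq:model_resource_integral} glues trivially.

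Nonnegativity, the regularity \(R\in W^{1,\infty}\), and the \(L^\infty\) bound propagate by Lemma~\ref{lem:wp_Linfty} applied on the whole glued interval. After \(\lceil T/\delta\rceil\) steps the interval \([0,T]\) is covered. Uniqueness on \([0,T]\) is proved by the Gr\"onwall argument of Theorem~\ref{thm:wp_local}, applied on each subinterval and propagated forward; alternatively, apply it directly on \([0,T]\) with \(M=M_\ast\).

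The main obstacle is Step 1. One must confirm that the contraction time \(\tau_M\) truly does not depend on the \(L^\infty\) size of the restart datum, only on its \(L^1\)/resource size. The difficulty is that Proposition~\ref{prop:wp_linear_stability} is stated under \(\|\bar x^{(i)}\|_{L^\infty}\le M\). I would handle this by working in the ball \(\mathfrak B_{M'}\) with the extra constraint \(\|\bar x\|_{L^\infty}\le M'\), where \(M':=\max(M_\ast, C_{\mathrm{lin}}(T)(C_\infty+C_{\mathcal B}M_\ast)+1)\). For short times, \eqref{eq:wp_linear_Linfty} shows that \(\Gamma\) preserves this constraint. This ball is closed under \(L^1\) limits, by lower semicontinuity of the \(L^\infty\) norm under a.e.\ convergent subsequences, and \(M'\) is again uniform in \(k\).
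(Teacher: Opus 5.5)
Your argument is correct and is the same continuation argument as the paper's: the paper rules out the blow-up alternative \eqref{eq:wp_blowup_alternative} at a maximal time \(T_{\max}<T\) using the a priori bounds of Lemmas~\ref{lem:wp_L1}--\ref{lem:wp_Linfty} and then patches local uniqueness, and you unpack that ``standard continuation principle'' into restarts of uniform length. Adding an \(L^\infty\) constraint to the fixed-point ball is a worthwhile refinement: the paper's ball \(\mathfrak B_M^{T_0}\) carries no \(L^\infty\) bound, yet Proposition~\ref{prop:wp_linear_stability} assumes \(\|\bar x^{(i)}\|_{L^\infty(Q_T)^S}\le M\), so the uniform step length really does require the uniform control of the restart datum's \(L^\infty\) norm that you make explicit.
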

\begin{proof}
Let \([0,T_{\max})\) be the maximal existence interval from Theorem~\ref{thm:wp_local}.
By the standard continuation principle, if \(T_{\max}<T\) then necessarily
\begin{equation}\label{eq:wp_blowup_alternative}
\limsup_{t\uparrow T_{\max}}
\Big(\|x(t)\|_X+\|x\|_{L^\infty(Q_t)^S}+|R(t)|\Big)=\infty.
\end{equation}
But Lemmas~\ref{lem:wp_L1}--\ref{lem:wp_Linfty} provide a priori bounds depending only
on the data and elapsed time, ruling out \eqref{eq:wp_blowup_alternative}.
Hence \(T_{\max}=T\). Uniqueness propagates by patching local uniqueness intervals.
\end{proof}

\subsection{Continuous dependence}
\begin{theorem}[Continuous dependence]\label{thm:wp_continuous_dependence}
Let
\((\phi^{(n)},R_0^{(n)},Y^{(n)},q^{(n)})
\to(\phi,R_0,Y,q)\)
\emph{strongly} in
\(X\times\mathbb R^S\times C([0,T];\mathbb R^{d_Y})\times L^\infty(0,T;\mathbb R^S)\).
Then
\begin{equation}\label{eq:wp_cont_dep}
\sup_{t\in[0,T]}\|x^{(n)}(t)-x(t)\|_X
+\sup_{t\in[0,T]}|R^{(n)}(t)-R(t)|
\longrightarrow 0.
\end{equation}
If, in addition, \(\|\phi^{(n)}-\phi\|_{(L^\infty)^S}\to 0\), then also
\(\|x^{(n)}-x\|_{L^\infty(Q_T)^S}\to 0\).
\end{theorem}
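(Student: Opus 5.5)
We prove the continuous dependence estimate by a Gr\"onwall-type argument that reuses the duality stability machinery behind Proposition~\ref{prop:wp_linear_stability} and the uniqueness part of Theorem~\ref{thm:wp_local}, now with the data also perturbed.

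\emph{Step 1: uniform bounds along the sequence.} Since $\phi^{(n)}\to\phi$ in $X$, $R_0^{(n)}\to R_0$, $Y^{(n)}\to Y$ uniformly and $q^{(n)}\to q$ in $L^\infty$, the quantities $\|\phi^{(n)}\|_X$, $|R_0^{(n)}|$ and $\|q^{(n)}\|_\infty$ are bounded. Moreover all $Y^{(n)}([0,T])$ lie in a single compact set $K$, for instance a closed neighbourhood of $Y([0,T])$. Lemmas~\ref{lem:wp_L1} and~\ref{lem:wp_Rbound} then give a common $M$ with $\sup_t\|x^{(n)}(t)\|_X+\sup_t|R^{(n)}(t)|\le M$.

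For the $L^\infty$ bound of Lemma~\ref{lem:wp_Linfty} we would need $\sup_n\|\phi^{(n)}\|_{L^\infty}<\infty$, which $L^1$ convergence does not give. We therefore first run the argument assuming this bound and then remove it by density: approximate $\phi$ and $\phi^{(n)}$ by truncations $\min(\phi,k)$, use the $L^1$ contraction-type stability for fixed coefficients, and note that the stability estimate below only uses $L^\infty$ bounds of the frozen coefficient inputs. This is the point I expect to be delicate. If the duality estimate in Appendix~\ref{app:linear_transport} needs an $L^\infty$ bound on one of the two solutions only, we place it on the limit solution $x$, which is bounded because $\phi\in L^\infty$, and the issue disappears.

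\emph{Step 2: stability inequality.} Set $w:=x^{(n)}-x$ and $\delta R:=R^{(n)}-R$. We view $x^{(n)}$ and $x$ as solutions of frozen problems with coefficients built from themselves. The coefficient differences $\delta g_s,\delta h_s,\delta\mu_s$ are controlled as in Lemma~\ref{lem:wp_coeff_bounds}, now including the $y$-dependence through the $C^1$ bounds on the compact set. This gives
\[
\|\delta g_s(t)\|_\infty+\|\delta h_s(t)\|_\infty+\|\delta\mu_s(t)\|_\infty\le C_M\big(\|w(t)\|_X+|\delta R(t)|+\|Y^{(n)}-Y\|_\infty\big).
\]
The harvest term satisfies $|\delta u_s|\le C(\|q^{(n)}-q\|_\infty+\|w\|_X+\|Y^{(n)}-Y\|_\infty)$, using the boundedness of $\eta_s$ and its $C^1$ regularity on compacts. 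The boundary and source terms are handled by \eqref{eq:wp_B_Lip} and \eqref{eq:wp_Min_L1_Lip}. The initial difference enters additively through $\|\phi^{(n)}-\phi\|_X$, by linearity of the frozen problem in the data. The duality estimate \eqref{eq:appA_sqrt_stability} then yields
\[
\|w(t)\|_X\le C\|\phi^{(n)}-\phi\|_X+2\sqrt{C'\textstyle\int_0^t(\|w\|_X+|\delta R|+\varepsilon_n)}+C\textstyle\int_0^t(\|w\|_X+|\delta R|+\varepsilon_n),
\]
where $\varepsilon_n:=\|Y^{(n)}-Y\|_\infty+\|q^{(n)}-q\|_\infty$. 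The resource equation \eqref{eq:model_resource_integral} gives
\[
|\delta R(t)|\le|R_0^{(n)}-R_0|+C\int_0^t(\|w\|_X+|\delta R|+\varepsilon_n).
\]

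\emph{Step 3: closing the estimate.} Let $\Phi=\|w\|_X^2+|\delta R|^2$. Squaring as in the uniqueness proof gives
\[
\Phi(t)\le C\delta_n+C\int_0^t\Phi,
\]
with $\delta_n:=\|\phi^{(n)}-\phi\|_X^2+|R_0^{(n)}-R_0|^2+\varepsilon_n+\varepsilon_n^2\to0$. Gr\"onwall's inequality then yields \eqref{eq:wp_cont_dep}.

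\emph{Step 4: $L^\infty$ convergence.} Assume in addition that $\|\phi^{(n)}-\phi\|_{(L^\infty)^S}\to 0$. We estimate $w$ in $L^\infty$ via the characteristic representation of Appendix~\ref{app:linear_transport}. The boundary inflow difference is controlled in $L^\infty(\Omega_c)$ by $C(\sup_t\|w\|_X+|\delta R|+\varepsilon_n)$, using the $L^\infty$ analogue of \eqref{eq:wp_B_Lip} from the kernel bounds. The source difference is controlled by \eqref{eq:wp_Min_Linfty}.

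Coefficient perturbations displace the characteristics. I would handle this by comparing each solution along its own characteristics and using the flow's Lipschitz dependence on the coefficients. A mismatch then arises in the $L^\infty$ norm whenever the data are discontinuous. If that obstructs the argument, the fallback is to interpret the convergence for data continuous along the flow, or to rely on an appendix stability estimate. I expect this step to be the main obstacle.
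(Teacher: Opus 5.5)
Your proposal follows the paper's own route: uniform bounds, coefficient differences, the duality estimate \eqref{eq:appA_sqrt_stability}, squaring, Gr\"onwall, and then an appendix estimate for $L^\infty$. That shared route breaks at the squaring step, and the $L^\infty$ part cannot be proved at all.

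\textbf{Step 3.} Squaring the root term in \eqref{eq:appA_sqrt_stability} produces $4C'\int_0^t(\|w\|_X+|\delta R|+\varepsilon_n)\,d\sigma$, which is \emph{linear} in $\|w\|_X$. You record exactly this for the forcing: your $\delta_n$ contains $\varepsilon_n$ rather than $\varepsilon_n^2$. What squaring actually gives is
\[
\Phi(t)\le C\delta_n+C\int_0^t\sqrt{\Phi(\sigma)}\,d\sigma+C\int_0^t\Phi(\sigma)\,d\sigma .
\]
Since $\sqrt r$ is not an Osgood modulus, this does not force $\Phi\to 0$. Take $\phi^{(n)}=\phi$, $R_0^{(n)}=R_0$, $\varepsilon_n=0$: then $\|w(t)\|_X=C't$, $\delta R\equiv 0$ satisfies both of your Step~2 inequalities. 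The most you can extract is $\sup_{[0,\tau]}\Phi\lesssim\delta_n+\tau^2$, which does not vanish as $n\to\infty$, and shrinking or iterating intervals does not help.

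The paper takes the identical step (``squaring, and using the Gr\"onwall structure (as in Theorem~\ref{thm:wp_local})''). The same defect sits in the uniqueness proof of Theorem~\ref{thm:wp_local} and in Step~4 of Proposition~\ref{prop:appA_stability}, so citing them does not repair it. What is missing is a stability bound \emph{linear} in the coefficient perturbation:
\[
\|w(t)\|_X\lesssim\|\phi^{(n)}-\phi\|_X+\int_0^t\big(\|\delta\,\mathrm{coeff}\|_\infty+\|\delta\zeta\|_{L^1}+\|\delta f\|_{L^1}\big)\,d\sigma .
\]
With only $L^\infty$ weights and $L^1\cap L^\infty$ data this is not available, because displacing characteristics costs the $L^1$-modulus of the data rather than a Lipschitz amount. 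Two ways forward:
\begin{itemize}
\item Add spatial structure, for instance BV control of one of the two solutions, or Lipschitz/BV regularity in $(l,c)$ of $\chi_{s,j,r},\varpi_s,\kappa_s$ and $\beta_s\Pi_s$. Then the feedback $t\mapsto(\mathcal N_s[x(t)],\widehat B_s[x(t)],R_s(t))$ and the inflow obey a genuine Gr\"onwall inequality.
\item Argue non-quantitatively: take convergent subsequences of these feedback paths in $C([0,T])$, pass to the limit in the weak formulation, and identify the limit by uniqueness. Uniqueness would then need an independent proof, since the paper's rests on the same squaring step.
\end{itemize}

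\textbf{Step 4.} Your suspicion is right, and the obstruction is not merely technical: under the standing hypotheses the $L^\infty$ assertion is false. Take $S=1$, $h\equiv0$, $\mu\equiv0$, $\beta\equiv0$, $q\equiv0$, $g=2+\tanh y$, $Y\equiv0$, $Y^{(n)}\equiv 1/n$, and $\phi^{(n)}=\phi=\mathbf 1_{[l_0,l_1]\times\Omega_c}$ with $l_0<l_1<l_m$. Then $x(t)$ and $x^{(n)}(t)$ are this indicator translated with speeds $2$ and $2+\tanh(1/n)$. So $x^{(n)}-x=\pm1$ on slivers of positive measure at both fronts, and $\|x^{(n)}-x\|_{L^\infty(Q_T)}=1$ for every $n$. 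The same happens if $g$ depends on $\mathcal N[x]=\int x$ and $\phi^{(n)}=(1+1/n)\phi$, which converges in $L^\infty$.

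This example also contradicts Proposition~\ref{prop:appA_Linfty_stability}. Its proof claims no spatial derivatives of $z^{(2)}$ enter, but when $z^{(2)}$ jumps, $\delta a\,\partial_l z^{(2)}$ is a measure on the jump set. So your fallback of relying on an appendix stability estimate, which is what the paper does, would import a false bound.

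A correct $L^\infty$ statement needs extra hypotheses, for example continuous data and kernels plus the corner compatibility $g(\mathcal N[\phi],R_0,Y(0),l_0,c)\,\phi(l_0,c)=\mathcal B[\phi,R_0,Y(0)](c)$. Then the limit solution is uniformly continuous, and displacing characteristics costs only its modulus of continuity.
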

\begin{proof}
All solutions lie in a common bounded ball by uniform a priori bounds. On a short
interval \([0,\tau]\), the coefficient differences for the \(n\)-th and limiting
solutions arise from two sources. First, from the data difference:
\begin{align*}
&\|g_s(\mathcal N_s[x^{(n)}],R_s^{(n)},Y^{(n)},\cdot)
-g_s(\mathcal N_s[x],R_s,Y,\cdot)\|_{L^\infty}\\
&\quad\le
C_M\big(\|x^{(n)}-x\|_X+|R^{(n)}-R|+\|Y^{(n)}-Y\|_C\big),
\end{align*}
and similarly for \(h_s,\mu_s\). Second, from the control change:
\[
\|u_s^{(q^{(n)})}-u_s^{(q)}\|_{L^\infty}
\le C_M\big(\|q^{(n)}-q\|_{L^\infty}+\|x^{(n)}-x\|_X\big).
\]
The boundary inflow \(\delta\zeta_s=\mathcal B_s[x^{(n)},R^{(n)},Y^{(n)}]
-\mathcal B_s[x,R,Y]\) satisfies
\(\|\delta\zeta_s\|_{L^1}\le C_M(\|x^{(n)}-x\|_X+|R^{(n)}-R|+\|Y^{(n)}-Y\|_C)\).
Inserting these into the duality stability estimate \eqref{eq:appA_sqrt_stability},
squaring, and using the Gr\"onwall structure (as in Theorem~\ref{thm:wp_local}), one obtains
\eqref{eq:wp_cont_dep} on \([0,\tau]\). For the \(L^\infty\) part, use
Proposition~\ref{prop:appA_Linfty_stability}, which requires the additional hypothesis
\(\|\phi^{(n)}-\phi\|_{L^\infty}\to 0\). Iterate over finitely many subintervals.
\end{proof}

\section{Reduced autonomous next-generation operator and threshold consequences}%
\label{sec:spectral_R0}

This section does not derive the generator framework from the PDE anew. Instead, it constructs the next-generation operator, records its spectral properties, and states the threshold consequence after the reduced linearized problem is embedded into the standard resolvent-positive semigroup setting \cite{Thieme2009,wang2026algebraic,huo2025growth}.

We freeze the environment, the control, and remove spatial coupling:
\begin{equation}\label{eq:r0_reduction}
S=1,\qquad Y(t)\equiv y^\ast\in\mathbb R^{d_Y},\qquad
q(t)\equiv \bar q\in[0,q_{\max}],\qquad \theta_{rs}\equiv 0.
\end{equation}

\subsection{Extinction equilibrium}
At \(x\equiv 0\), \(\mathcal N[0]=0\), \(\widehat B[0]=0\), and the resource
satisfies \(F(R^\circ,y^\ast)=0\).
\begin{assumption}[Extinction resource equilibrium]\label{ass:r0_resource_eq}
Equation \(F(R^\circ,y^\ast)=0\) has a unique solution
\(R^\circ=R^\circ(y^\ast)\in\mathbb R_+\), and
\(\partial_R F(R^\circ,y^\ast)<0\).
\end{assumption}

\subsection{Linearization at extinction}
Freeze all coefficients at \((0,R^\circ)\) and define
\begin{align}
g^\circ(l,c)&:=g(0,R^\circ,y^\ast,l,c), &
h^\circ(l,c)&:=h(0,R^\circ,y^\ast,l,c), \label{eq:r0_gh0}\\
\mu^\circ(l,c)&:=\mu(0,R^\circ,y^\ast,l,c), &
u^\circ(l,c)&:=\bar q\,\sigma(l,c)\eta(0,y^\ast), \label{eq:r0_mu_u0}\\
\beta^\circ(l,c')&:=\beta(0,R^\circ,y^\ast,l,c'), &
\Pi^\circ(c\,|\,l,c')&:=\Pi(c\,|\,l,c',R^\circ,y^\ast). \label{eq:r0_beta_Pi0}
\end{align}

\subsection{Characteristic representation}
The autonomous characteristic system
\begin{equation}\label{eq:r0_chars}
\dot L(\tau)=g^\circ(L,C),\quad
\dot C(\tau)=h^\circ(L,C),\quad
L(0)=l_0,\ C(0)=c_0,
\end{equation}
has a unique maximal solution for each \(c_0\in\Omega_c\),
with \(L(\tau;c_0)\) strictly increasing. Define
\[
\tau_{\max}(c_0)
:=\sup\{\tau>0:(L(s;c_0),C(s;c_0))\in(l_0,l_m)\times(0,c_m)\ \forall s\in[0,\tau)\}.
\]
Then
\begin{equation}\label{eq:r0_tau_bound}
0<\tau_{\max}(c_0)\le \overline\tau_{\max}:=\frac{l_m-l_0}{g_{\min}}
\qquad\forall\,c_0\in\Omega_c.
\end{equation}
Define the survival factor
\begin{equation}\label{eq:r0_survival}
\ell(\tau;c_0)
:=
\exp\!\Big(
-\int_0^\tau
\big[
\partial_l g^\circ+\partial_c h^\circ+\mu^\circ+u^\circ
\big](L(s;c_0),C(s;c_0))\,ds
\Big).
\end{equation}

\subsection{Next-generation operator}
Let \(\mathcal V:=L^1(\Omega_c)\), \(\mathcal V_+:=L^1_+(\Omega_c)\).
Define the kernel
\begin{equation}\label{eq:r0_kernel}
\mathscr K(c,c_0;y^\ast,\bar q)
:=
\int_0^{\tau_{\max}(c_0)}
\beta^\circ(L(\tau;c_0),C(\tau;c_0))\,
\Pi^\circ(c\,|\,L(\tau;c_0),C(\tau;c_0))\,
\ell(\tau;c_0)\,d\tau,
\end{equation}
and the integral operator
\begin{equation}\label{eq:r0_operator}
(\mathcal K(y^\ast,\bar q)\psi)(c)
:=
\int_{\Omega_c}\mathscr K(c,c_0;y^\ast,\bar q)\,\psi(c_0)\,dc_0.
\end{equation}

\begin{assumption}[Kernel regularity]\label{ass:r0_kernel}
For each fixed \((y^\ast,\bar q)\):
\begin{enumerate}[label=\textup{(K\arabic*)}]
\item The integrand in \eqref{eq:r0_kernel} is measurable and dominated by an integrable
majorant independent of \((c,c_0)\).
\item \(\mathscr K(\cdot,\cdot;y^\ast,\bar q)\in C(\Omega_c\times\Omega_c)\).
\end{enumerate}
\end{assumption}

\begin{proposition}\label{prop:r0_compact}
Under Assumptions~\ref{ass:model_vital_rates}, \ref{ass:model_birth_operator}, \ref{ass:r0_resource_eq} and~\ref{ass:r0_kernel},
\(\mathcal K(y^\ast,\bar q):\mathcal V\to\mathcal V\) is bounded, positive, and compact.
More precisely, \(\mathcal K\) maps \(L^1(\Omega_c)\) into \(C(\Omega_c)\).
\end{proposition}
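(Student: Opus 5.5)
The plan is to treat \(\mathcal K=\mathcal K(y^\ast,\bar q)\) as an integral operator whose kernel \(\mathscr K\) is continuous on the compact square \(\Omega_c\times\Omega_c\), and to read off the three claimed properties from that. The steps below are taken in order.

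\emph{Step 1: positivity.} Positivity is immediate from the sign of the integrand in \eqref{eq:r0_kernel}. By Assumption~\ref{ass:model_birth_operator}, \(\beta^\circ\ge 0\) and \(\Pi^\circ\ge 0\). The survival factor \(\ell\) is an exponential and hence positive. The integration interval is \([0,\tau_{\max}(c_0)]\), which has nonnegative length by \eqref{eq:r0_tau_bound}. Therefore \(\mathscr K\ge 0\), and \(\psi\ge 0\) implies \(\mathcal K\psi\ge 0\).

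\emph{Step 2: boundedness and the mapping into \(C(\Omega_c)\).} By (K2) the kernel is continuous on a compact set, so \(\|\mathscr K\|_\infty<\infty\). One could also bound \(\mathscr K\) directly by integrating the majorant from (K1) over \([0,\overline\tau_{\max}]\), using the finite residence time \eqref{eq:r0_tau_bound}. It follows that \(|\mathcal K\psi(c)|\le\|\mathscr K\|_\infty\|\psi\|_{L^1}\) for every \(c\). Hence \(\|\mathcal K\psi\|_{L^\infty}\le \|\mathscr K\|_\infty\|\psi\|_{L^1}\) and \(\|\mathcal K\psi\|_{L^1}\le c_m\|\mathscr K\|_\infty\|\psi\|_{L^1}\), so \(\mathcal K\) is bounded on \(\mathcal V\). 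For continuity of \(\mathcal K\psi\), take \(c,c'\in\Omega_c\). Then
\[
|\mathcal K\psi(c)-\mathcal K\psi(c')|\le \sup_{c_0\in\Omega_c}|\mathscr K(c,c_0)-\mathscr K(c',c_0)|\,\|\psi\|_{L^1}\le \omega(|c-c'|)\,\|\psi\|_{L^1},
\]
where \(\omega\) is a modulus of uniform continuity of \(\mathscr K\) on \(\Omega_c\times\Omega_c\). This shows \(\mathcal K\psi\in C(\Omega_c)\), and the estimate holds uniformly over \(\psi\) in the unit ball.

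\emph{Step 3: compactness.} The estimates in Step 2 show that \(\mathcal K\) maps the unit ball of \(L^1(\Omega_c)\) onto a family that is uniformly bounded in sup norm and equicontinuous in \(C(\Omega_c)\). By Arzel\`a--Ascoli this family is relatively compact in \(C(\Omega_c)\). The embedding \(C(\Omega_c)\hookrightarrow L^1(\Omega_c)\) is continuous since \(\|\cdot\|_{L^1}\le c_m\|\cdot\|_\infty\). Composing the two, \(\mathcal K:L^1\to L^1\) is compact.

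\emph{Main obstacle.} With (K2) assumed, the argument is routine. The only delicate point would be justifying (K2) itself. That would require joint continuity in \((c,c_0)\) of \(\tau_{\max}(c_0)\), of the characteristics, and of \(\Pi^\circ\). However, (K2) is imposed as a hypothesis, so I will not attempt to verify it. I will only remark that (K1) guarantees the kernel integral is well defined and measurable, by Fubini/Tonelli, so that \(\mathcal K\) is a genuine integral operator.
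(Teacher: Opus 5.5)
Your proposal is correct and follows essentially the same route as the paper. Both arguments take positivity from the nonnegative integrand, use uniform continuity of $\mathscr K$ on the compact square to get a uniformly bounded, equicontinuous image of the $L^1$ unit ball in $C(\Omega_c)$, and conclude with Arzel\`a--Ascoli plus the continuous embedding $C(\Omega_c)\hookrightarrow L^1(\Omega_c)$. The only cosmetic difference is that you bound $\|\mathcal K\|_{\mathcal L(\mathcal V)}$ by $c_m\|\mathscr K\|_\infty$, whereas the paper uses the column-integral bound $\sup_{c_0}\int_{\Omega_c}\mathscr K(c,c_0)\,dc$, which is exactly the $L^1$ operator norm of a nonnegative kernel.
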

\begin{proof}
Positivity is immediate. Boundedness follows from
\(\sup_{c_0}\int_{\Omega_c}\mathscr K\,dc\le C_{\mathscr K}\).
Since \(\mathscr K\in C(\Omega_c\times\Omega_c)\) and \(\Omega_c=[0,c_m]\) is compact,
\(\mathscr K\) is uniformly continuous. For any \(\psi\in L^1(\Omega_c)\),
the function \(c\mapsto(\mathcal K\psi)(c)=\int\mathscr K(c,c_0)\psi(c_0)\,dc_0\) is
continuous, so \(\mathcal K:L^1(\Omega_c)\to C(\Omega_c)\).
The image of the unit ball of \(L^1(\Omega_c)\) under \(\mathcal K\) is uniformly bounded
and equicontinuous in \(C(\Omega_c)\) (by uniform continuity of \(\mathscr K\)).
Arzel\`a--Ascoli gives relative compactness in \(C(\Omega_c)\),
hence also in \(L^1(\Omega_c)\).
\end{proof}

\begin{definition}[Reduced basic reproduction number]\label{def:r0}
\(\mathcal R_0(y^\ast,\bar q):=r(\mathcal K(y^\ast,\bar q))\).
\end{definition}

\begin{theorem}[Basic spectral properties]\label{thm:r0_basic}
The following hold.
\begin{enumerate}[label=\textup{(\alph*)}]
\item\label{item:r0_eig}
\(r(\mathcal K)\) is an eigenvalue with a nonnegative eigenfunction
\(\psi^\ast\in\mathcal V_+\setminus\{0\}\)
\textup{(Krein--Rutman \cite{KreinRutman1948,yu2026beyond,phat1994kreuin})}.
\item\label{item:r0_qmono}
\(\mathcal R_0(y^\ast,\bar q)\) is nonincreasing in \(\bar q\).
\item\label{item:r0_mono_coeff}
\(\mathcal R_0\) is pointwise nondecreasing in \(\beta^\circ\) and nonincreasing in
\(\mu^\circ\).
\end{enumerate}
\end{theorem}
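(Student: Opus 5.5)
The plan is to treat the three items separately. All of them rest on one structural observation: the characteristic curves $(L(\tau;c_0),C(\tau;c_0))$ in \eqref{eq:r0_chars}, and hence $\tau_{\max}(c_0)$, depend only on $g^\circ$ and $h^\circ$. They do not involve $\bar q$, $\beta^\circ$ or $\mu^\circ$. Consequently, changing any of these three parameters only changes the integrand in \eqref{eq:r0_kernel}, never the domain of integration or the path along which it is evaluated.

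\textbf{Item (a).} I would invoke Proposition~\ref{prop:r0_compact}: $\mathcal K$ is compact and positive on the Banach lattice $\mathcal V=L^1(\Omega_c)$, whose positive cone $\mathcal V_+$ is closed and total (indeed generating). If $r(\mathcal K)>0$, the Krein--Rutman theorem gives directly that $r(\mathcal K)$ is an eigenvalue with an eigenvector $\psi^\ast\in\mathcal V_+\setminus\{0\}$. Since $\mathscr K$ is continuous, the range lies in $C(\Omega_c)$, so $\psi^\ast=r(\mathcal K)^{-1}\mathcal K\psi^\ast$ is even continuous.

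The degenerate case $r(\mathcal K)=0$ is where I expect the real obstacle. Krein--Rutman gives nothing there, and a continuous Volterra-type kernel such as $(c_0-c)^+$ yields a quasinilpotent operator with trivial kernel on $\mathcal V_+$. In that example no nonnegative eigenfunction for the eigenvalue $0$ exists, so item (a) is false as literally stated when $r(\mathcal K)=0$. My plan is therefore to prove (a) under the proviso $r(\mathcal K)>0$. To secure that proviso I would first try to obtain strict positivity of the kernel on a set of positive measure together with irreducibility. For example, $\beta^\circ\Pi^\circ>0$ on a neighbourhood of the characteristics would make $\mathscr K>0$, and de Pagter's theorem then gives $r(\mathcal K)>0$. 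Otherwise I would state the case $r(\mathcal K)=0$ explicitly as excluded. Since only the supercritical case $\mathcal R_0>1$ is used later, this restriction is harmless downstream.

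\textbf{Items (b) and (c).} I would prove a comparison lemma: if $0\le\mathscr K_1\le\mathscr K_2$ pointwise, then $r(\mathcal K_1)\le r(\mathcal K_2)$. To prove it, use positivity to get $|\mathcal K_1^n\psi|\le\mathcal K_1^n|\psi|\le\mathcal K_2^n|\psi|$, and then the lattice property of the $L^1$ norm to get $\|\mathcal K_1^n\|\le\|\mathcal K_2^n\|$. Gelfand's formula $r=\lim\|\mathcal K^n\|^{1/n}$ then concludes.

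It then remains to check pointwise kernel monotonicity.
\begin{itemize}
\item Since $u^\circ=\bar q\,\sigma\,\eta(0,y^\ast)$ with $\sigma,\eta\ge 0$, increasing $\bar q$ increases the exponent's integrand in \eqref{eq:r0_survival}. Hence $\ell(\tau;c_0)$ decreases and so does $\mathscr K$, which gives (b).
\item Increasing $\beta^\circ$ pointwise increases the nonnegative integrand of \eqref{eq:r0_kernel} linearly.
\item Increasing $\mu^\circ$ pointwise decreases $\ell$, and hence decreases the integrand.
\end{itemize}
In each case the majorant of Assumption~\ref{ass:r0_kernel} keeps all integrals finite. Applying the comparison lemma to these three cases yields (b) and (c).
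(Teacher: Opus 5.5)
Your arguments for (b) and (c) are the paper's: the characteristics depend only on $g^\circ,h^\circ$, the kernel $\mathscr K$ moves pointwise in the right direction, and the spectral radius is monotone on positive operators. Your Gelfand-formula lemma supplies that last step correctly.

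For (a) you are right that Krein--Rutman yields an eigenvector in $\mathcal V_+$ only when $r(\mathcal K)>0$; the paper's proof just cites Krein--Rutman and passes over this. But you have not shown that (a) is false when $r(\mathcal K)=0$ for this operator. Excluding that case, or adding an irreducibility hypothesis, leaves part of the statement unproved. Your example $(c_0-c)^+$ is not a kernel of the form \eqref{eq:r0_kernel}. By Tonelli and (B3), $\nu(c_0):=\int_{\Omega_c}\mathscr K(c,c_0)\,dc=\int_0^{\tau_{\max}(c_0)}\beta^\circ\ell\,d\tau$. By (B2), $\mathscr K(c,c_0)\le \Pi_{\max}\,\nu(c_0)$, where $\Pi_{\max}$ bounds $\Pi^\circ$. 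For $(c_0-c)^+$, however, $\mathscr K(0,c_0)/\nu(c_0)=2/c_0$ is unbounded.

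This structural bound is the missing idea, and it settles the degenerate case.
\begin{itemize}
\item Let $Z:=\{\nu=0\}$, which is closed by (K2). Every $\psi\ge 0$ supported in $Z$ satisfies $\mathcal K\psi=0$. So it suffices to show that $|Z|=0$ forces $r(\mathcal K)>0$.
\item Suppose $|Z|=0$. Choose $\epsilon>0$ so that the $\epsilon$-neighbourhood $Z_\epsilon$ of $Z$ in $\Omega_c$ satisfies $\Pi_{\max}|Z_\epsilon|\le\tfrac12$.
\item Set $\Omega_\epsilon:=\Omega_c\setminus Z_\epsilon$. It is nonempty because $\Pi_{\max}\ge 1/c_m$. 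Put $\nu_\epsilon:=\min_{\Omega_\epsilon}\nu>0$.
\item With $\mathcal K'$ the adjoint of $\mathcal K$ on $L^\infty(\Omega_c)$,
\[
\big(\mathcal K'(\mathbf 1_{\Omega_\epsilon}\nu)\big)(c_0)
\ge \nu_\epsilon\Big(\nu(c_0)-\int_{Z_\epsilon}\mathscr K(c,c_0)\,dc\Big)
\ge \tfrac12\,\nu_\epsilon\,\nu(c_0).
\]
\item By positivity and induction, $(\mathcal K')^n\nu\ge(\nu_\epsilon/2)^n\nu$. Hence $r(\mathcal K)=r(\mathcal K')\ge\nu_\epsilon/2>0$.
\end{itemize}
Therefore $r(\mathcal K)=0$ forces $|Z|>0$, and $\psi^\ast:=\mathbf 1_Z$ is a nonnegative eigenfunction for the eigenvalue $0$. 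Combined with your Krein--Rutman argument for $r(\mathcal K)>0$, this proves (a) as stated, with no extra positivity hypotheses.
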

\begin{proof}
Part~\ref{item:r0_eig}: Proposition~\ref{prop:r0_compact} and Krein--Rutman.
Parts~\ref{item:r0_qmono}--\ref{item:r0_mono_coeff}: increasing \(\bar q\) (resp.\
\(\mu^\circ\)) decreases \(\ell\) in \eqref{eq:r0_survival}, hence decreases \(\mathscr K\)
pointwise. Monotonicity of the spectral radius for positive operators gives the
conclusion.
\end{proof}

\subsection{Continuity with respect to the frozen environment}
\begin{assumption}[Parameter continuity]\label{ass:r0_param_cont}
Let \(\mathscr Y\subset\mathbb R^{d_Y}\) be compact. The maps
\(y^\ast\mapsto R^\circ(y^\ast)\),
\(y^\ast\mapsto g^\circ,h^\circ,\mu^\circ,\beta^\circ,\Pi^\circ,\eta(0,y^\ast)\)
are continuous in the natural sup-norm topologies on \(\mathscr Y\),
and the domination in Assumption~\ref{ass:r0_kernel} is uniform for
\(y^\ast\in\mathscr Y\).
\end{assumption}

\begin{theorem}[Continuity of \(\mathcal R_0\)]\label{thm:r0_cont}
Under Assumptions~\ref{ass:model_vital_rates}, \ref{ass:model_birth_operator}, \ref{ass:r0_resource_eq}, \ref{ass:r0_kernel}, and~\ref{ass:r0_param_cont}, for fixed \(\bar q\),
\(\mathscr Y\ni y^\ast\mapsto \mathcal R_0(y^\ast,\bar q)\)
is continuous.
\end{theorem}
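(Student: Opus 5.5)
The plan is to show that \(y^\ast\mapsto\mathcal K(y^\ast,\bar q)\) is continuous in the operator norm of \(\mathcal L(L^1(\Omega_c))\). We then combine this with compactness (Proposition~\ref{prop:r0_compact}) and positivity to get continuity of the spectral radius. The spectral radius is upper semicontinuous in general but not continuous, so the compactness and positivity are what give the full continuity.

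\emph{Step 1: characteristics depend continuously on \(y^\ast\).} Let \(y_n\to y^\ast\) in \(\mathscr Y\). By Assumption~\ref{ass:r0_param_cont}, the frozen coefficients \(g^\circ_n,h^\circ_n\) converge uniformly to \(g^\circ,h^\circ\). These are \(C^1\) in \((l,c)\) with derivatives bounded on the compact set \(\{0\}\times R^\circ(\mathscr Y)\times\mathscr Y\times\Omega_l\times\Omega_c\) by (V1), (V5). Gr\"onwall's lemma then gives \(\sup_{\tau\le\overline\tau_{\max}}|(L_n,C_n)(\tau;c_0)-(L,C)(\tau;c_0)|\to0\), uniformly in \(c_0\). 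The divergence terms \(\partial_l g^\circ+\partial_c h^\circ\) in \eqref{eq:r0_survival} also need uniform convergence, which I would take from the \(C^1\) regularity of \(g,h\) jointly in \((R,y,l,c)\) together with continuity of \(R^\circ\). The survival factors \(\ell_n\to\ell\) then converge uniformly on \(\{\tau\le\min(\tau_{\max,n},\tau_{\max})\}\).

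\emph{Step 2: kernel convergence.} This is the main obstacle, for two reasons. First, the exit times \(\tau_{\max,n}(c_0)\) need not converge when trajectories are tangent to the \(c\)-boundary. Second, \(\Pi^\circ\) is only measurable in \(c\). I would therefore avoid pointwise kernel convergence and prove
\[
\|\mathcal K_n-\mathcal K\|_{\mathcal L(L^1)}\le\sup_{c_0}\int_{\Omega_c}|\mathscr K_n(c,c_0)-\mathscr K(c,c_0)|\,dc\to0 ,
\]
which is the correct \(L^1\) operator norm bound for integral operators. The integrand difference splits into three pieces.
\begin{itemize}
\item The difference of \(\beta^\circ\ell\) on the common time interval, which tends to zero by Step~1 and uniform continuity of \(\beta\).
\item The difference \(\Pi^\circ_n-\Pi^\circ\) in \(L^1(dc)\). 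This is controlled by the sup-norm continuity of \(\Pi^\circ\) in Assumption~\ref{ass:r0_param_cont}, together with the normalization (B3), which gives \(\int\Pi\,dc=1\).
\item The contribution of the mismatched time window \(|\tau_{\max,n}-\tau_{\max}|\). Using the uniform majorant of (K1), uniform in \(\mathscr Y\) by Assumption~\ref{ass:r0_param_cont}, this contribution is bounded by the majorant integrated over a small set.
\end{itemize}
For the window term, I would first try to show that \(\tau_{\max,n}\to\tau_{\max}\) except on a set of \(c_0\) whose measure tends to zero. If this fails, a fallback is to use the continuity of \(\mathscr K\) in (K2) together with a dominated convergence argument in \(c_0\), which gives strong convergence \(\mathcal K_n\to\mathcal K\).

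\emph{Step 3: spectral radius.} If norm convergence is obtained, the spectrum of compact operators is continuous at isolated nonzero eigenvalues (Newburgh's theorem). Hence \(r(\mathcal K_n)\to r(\mathcal K)\) when \(r(\mathcal K)>0\), and upper semicontinuity handles the case \(r(\mathcal K)=0\).

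If only strong convergence is obtained, I would instead use collective compactness. The images of the unit ball under all \(\mathcal K_n\) are uniformly equicontinuous in \(C(\Omega_c)\), by uniform continuity of the kernels in (K2), made uniform over \(\mathscr Y\). Anselone's theory then gives \(\|(\mathcal K_n-\mathcal K)\mathcal K_n\|\to0\), which again yields convergence of isolated spectral points, and in particular of \(r(\mathcal K_n)\) to \(r(\mathcal K)\). Positivity through Krein--Rutman (Theorem~\ref{thm:r0_basic}\ref{item:r0_eig}) ensures that the limit of the Perron eigenvalues is exactly the spectral radius.
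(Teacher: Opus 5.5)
Your Step~3 matches the paper's argument. The paper gets the $\limsup$ from upper semicontinuity of $r$ (via $r(T)=\inf_p\|T^p\|^{1/p}$) and the $\liminf$ from a Riesz projection around the isolated eigenvalue $r(\mathcal K)>0$; Newburgh's theorem packages exactly this. Like you, the paper rests everything on $\|\mathcal K_n-\mathcal K\|_{\mathcal L(L^1)}\to0$, but it only asserts this in one line, from continuous dependence of the characteristic flow and Assumption~\ref{ass:r0_param_cont}. So the whole weight is on your Step~2, and there the proposal has a genuine gap.

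\emph{The missing $\Pi^\circ$ term.} Write $X=(L,C)$. Your three-term split omits the shift of the conditioning point of $\Pi^\circ$:
\[
\beta^\circ(X)\,\ell\,\bigl[\Pi^\circ(c\,|\,X_n(\tau;c_0))-\Pi^\circ(c\,|\,X(\tau;c_0))\bigr].
\]
Sup-norm continuity in $y^\ast$ only compares $\Pi^\circ_n$ and $\Pi^\circ$ at the same $(l,c')$. By (B2), $\Pi^\circ$ is merely bounded and measurable in $(l,c')$. So uniform convergence $X_n\to X$ gives no control of this term, neither uniformly in $c_0$ nor for fixed $c_0$.

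\emph{The window term.} This cannot be made small uniformly in $c_0$ from Step~1 and the uniform majorant. Suppose $h^\circ\equiv0$ near $c=0$ at $y^\ast$ while $h^\circ_n=-\varepsilon_n$ there; both are compatible with Assumption~\ref{ass:model_condition_boundary_orientation}. Set $t_\ast:=(l_m-l_0)/(2g_{\max})$. The perturbed characteristic from $c_0=\varepsilon_n t_\ast$ leaves through $c=0$ at time $t_\ast$, while the limiting one stays at height $c_0$ until it reaches $l=l_m$. If $\beta^\circ>0$ near $c=0$, then $\int|\mathscr K_n(c,c_0)-\mathscr K(c,c_0)|\,dc$ stays bounded below, so the $\mathcal L(L^1)$ estimate you aim for fails there.

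\emph{The fallback.} It does not repair this as stated. (K2) is a property of each kernel separately and gives no column convergence $\mathscr K_n(\cdot,c_0)\to\mathscr K(\cdot,c_0)$. For that you would still need a.e.\ exit-time convergence and control of the missing $\Pi^\circ$ term. Collective compactness needs a modulus of continuity in $c$ uniform in $(c_0,y)\in\Omega_c\times\mathscr Y$. Assumption~\ref{ass:r0_param_cont} makes only the (K1) majorant uniform in $y^\ast$.

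\emph{A way to close the gap.} Give up the supremum over $c_0$. Let $C$ be the uniform kernel bound. Each $\mathcal K_y$ maps $L^1$ into $L^\infty$ and is compact on both $L^1(\Omega_c)$ and $L^2(\Omega_c)$, so its nonzero spectrum is the same on the two spaces. On $L^2$,
\[
\|\mathcal K_n-\mathcal K\|_{\mathcal L(L^2)}^2\le\|\mathscr K_n-\mathscr K\|_{L^2(\Omega_c\times\Omega_c)}^2\le 2C\,\|\mathscr K_n-\mathscr K\|_{L^1(\Omega_c\times\Omega_c)} .
\]
It therefore suffices to prove $\int\!\!\int|\mathscr K_n-\mathscr K|\,dc\,dc_0\to0$, and this does hold:
\begin{itemize}
\item The map $(\tau,c_0)\mapsto X(\tau;c_0)$ is injective, with Jacobian $g^\circ(l_0,c_0)\exp\int_0^\tau(\partial_l g^\circ+\partial_c h^\circ)(X(s;c_0))\,ds$, which is bounded below uniformly in $n$.
\item Approximating $\Pi^\circ$ by continuous functions in $L^1$ therefore disposes of the shifted term once you integrate in $c_0$.
\item The set of $(\tau,c_0)$ where the two exit windows disagree has measure tending to $0$. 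Every point of the limiting window eventually lies in the perturbed one. Both characteristic maps cover $\Omega_l\times\Omega_c$, since backward characteristics cannot leave through $c\in\{0,c_m\}$. Hence the Jacobian-weighted areas of the two windows coincide.
\end{itemize}
Your Step~3, run on $L^2$, then finishes the proof.
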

\begin{proof}
Let \(y_n^\ast\to y^\ast\). Set
\(\hat{\mathcal K}_n:=\mathcal K(y_n^\ast,\bar q)\) and
\(\hat{\mathcal K}:=\mathcal K(y^\ast,\bar q)\).
By continuous dependence of ODE flows on parameters and Assumption~\ref{ass:r0_param_cont},
\(\|\hat{\mathcal K}_n-\hat{\mathcal K}\|_{\mathcal L(\mathcal V)}\to 0\).

\emph{Upper bound.} The spectral-radius formula
\(r(T)=\inf_{p\ge 1}\|T^p\|^{1/p}\)
gives: for each \(\varepsilon>0\), choose \(p\) with
\(\|\hat{\mathcal K}^p\|^{1/p}<r(\hat{\mathcal K})+\varepsilon\).
Since \(\hat{\mathcal K}_n^p\to\hat{\mathcal K}^p\) in norm,
\(\limsup r(\hat{\mathcal K}_n)\le r(\hat{\mathcal K})+\varepsilon\).
As \(\varepsilon\) is arbitrary, \(\limsup r(\hat{\mathcal K}_n)\le r(\hat{\mathcal K})\).

\emph{Lower bound.} If \(r(\hat{\mathcal K})=0\), we are done. Assume
\(r(\hat{\mathcal K})>0\). For any \(\varepsilon>0\), choose
\(\delta<\varepsilon\) such that the circle
\(\Gamma:=\{z:|z-r(\hat{\mathcal K})|=\delta\}\) avoids \(\sigma(\hat{\mathcal K})\).
The Riesz projection
\(P:=\frac{1}{2\pi i}\oint_\Gamma(zI-\hat{\mathcal K})^{-1}dz\neq 0\).
For \(n\) large, \(zI-\hat{\mathcal K}_n\) is invertible on \(\Gamma\) and
\(P_n\to P\); hence \(P_n\neq 0\) and \(\hat{\mathcal K}_n\) has an eigenvalue
\(\lambda_n\) with \(|\lambda_n-r(\hat{\mathcal K})|<\delta<\varepsilon\).
Therefore
\(r(\hat{\mathcal K}_n)\ge |\lambda_n|>r(\hat{\mathcal K})-\varepsilon\).
Since \(\varepsilon\) is arbitrary, \(\liminf r(\hat{\mathcal K}_n)\ge r(\hat{\mathcal K})\).
\end{proof}

\subsection{Threshold sign for the linearized autonomous problem}
To connect \(\mathcal R_0\) to the growth bound, we impose a semigroup hypothesis.
\begin{assumption}[Linear semigroup framework]\label{ass:r0_semigroup}
Let \(\mathfrak X:=L^1(\Omega_l\times\Omega_c)\). The linearized problem at
\((0,R^\circ)\) may be written
\(\dot z=(\mathcal A_0+\mathcal F)z\), where:
\begin{enumerate}[label=\textup{(S\arabic*)}]
\item \(\mathcal A_0\) is the transport--mortality operator with homogeneous inflow
and is resolvent-positive, with \(0\in\rho(\mathcal A_0)\).
\item \(\mathcal F\) is the positive boundary recruitment operator.
\item \(\mathcal A_0+\mathcal F\) generates a positive \(C_0\)-semigroup on
\(\mathfrak X\).
\item \(\mathcal K(y^\ast,\bar q)=-\mathcal F\mathcal A_0^{-1}\).
\end{enumerate}
\end{assumption}

\begin{remark}\label{rem:semigroup_conditional}
Assumption~\ref{ass:r0_semigroup} is \emph{not} derived from the PDE analysis in this paper. It
constitutes a hypothesis under which the standard resolvent-positive reproduction-number
theory applies \cite{Thieme2009,yu2026microscopic,AniKuzoBanasiak2019}. Verifying this hypothesis---constructing the domains, proving
generation, and confirming the identity (S4)---for the specific transport--renewal
operator at hand is a functional-analytic realization step that requires additional
work beyond the present scope.
\end{remark}

\begin{theorem}[Reduced threshold sign]\label{thm:r0_sign}
Under Assumptions~\ref{ass:model_vital_rates}, \ref{ass:model_birth_operator}, \ref{ass:r0_resource_eq}, \ref{ass:r0_kernel}, and~\ref{ass:r0_semigroup},
let \(s(\mathcal A_0+\mathcal F)\) denote the spectral bound. Then
\begin{equation}\label{eq:r0_sign}
s(\mathcal A_0+\mathcal F)\gtrless 0
\iff
\mathcal R_0(y^\ast,\bar q)\gtrless 1,
\end{equation}
and similarly at equality.
\end{theorem}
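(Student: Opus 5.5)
The plan is to follow Thieme's resolvent-positive perturbation theory, applied within the framework fixed by Assumption~\ref{ass:r0_semigroup}. The key identity is (S4), $\mathcal K=-\mathcal F\mathcal A_0^{-1}$. Since (S1) gives $0\in\rho(\mathcal A_0)$ with $\mathcal A_0$ resolvent-positive, we have $s(\mathcal A_0)<0$, and $-\mathcal A_0^{-1}=(0-\mathcal A_0)^{-1}\ge 0$. The aim is to show that the sign of $s(\mathcal A_0+\mathcal F)$ is read off from $r(\mathcal F(\lambda-\mathcal A_0)^{-1})$ at $\lambda=0$, using the factorization
\[
\lambda-\mathcal A_0-\mathcal F=\big(I-\mathcal F(\lambda-\mathcal A_0)^{-1}\big)(\lambda-\mathcal A_0),\qquad \lambda>s(\mathcal A_0).
\]

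The first step is to record the monotonicity of $\lambda\mapsto r_\lambda:=r(\mathcal F(\lambda-\mathcal A_0)^{-1})$ for $\lambda>s(\mathcal A_0)$. This map is nonincreasing by the resolvent identity and positivity, since $(\lambda-\mathcal A_0)^{-1}\ge(\mu-\mathcal A_0)^{-1}\ge0$ for $\lambda\le\mu$. It also tends to $0$ as $\lambda\to\infty$, because $\|(\lambda-\mathcal A_0)^{-1}\|\to0$. I expect this last point to need care: $\mathcal F$ is a boundary operator, so the product should be handled as the operator $\mathcal F(\lambda-\mathcal A_0)^{-1}$ on $\mathfrak X$, which (S4) shows is bounded at $\lambda=0$.

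The second step treats the case $\mathcal R_0<1$. Here $r_0<1$, so the Neumann series $\sum_k(\mathcal F(-\mathcal A_0)^{-1})^k$ converges and is positive. Writing $(\lambda-\mathcal A_0-\mathcal F)^{-1}=(\lambda-\mathcal A_0)^{-1}\sum_k(\mathcal F(\lambda-\mathcal A_0)^{-1})^k$ gives a positive resolvent for all $\lambda\ge0$, since $r_\lambda\le r_0<1$ there. For a resolvent-positive generator this forces $s(\mathcal A_0+\mathcal F)<0$; this is Thieme's characterisation of the spectral bound via the abscissa of positivity of the resolvent, and I will cite Theorem~3.5 of \cite{Thieme2009}.

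The third step treats the case $\mathcal R_0>1$. By Theorem~\ref{thm:r0_basic}\ref{item:r0_eig} there is $\psi^\ast\ge0$ with $\mathcal K\psi^\ast=\mathcal R_0\psi^\ast$. I will argue by contradiction. Suppose $s(\mathcal A_0+\mathcal F)<0$. Then $(\mathcal A_0+\mathcal F)^{-1}$ exists and $-(\mathcal A_0+\mathcal F)^{-1}\ge0$. The factorization then gives $(I-\mathcal K)^{-1}$ as a positive operator, namely $-(\mathcal A_0+\mathcal F)^{-1}$ composed on the appropriate side. But a positive inverse of $I-\mathcal K$ exists only when $r(\mathcal K)<1$, since applying it to $(1-\mathcal R_0)\psi^\ast$ yields $\psi^\ast\ge0$ while the left side is $\le0$. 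This is a contradiction, so $s(\mathcal A_0+\mathcal F)\ge0$.

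To get strict positivity I will use continuity and monotonicity of $r_\lambda$. If $s(\mathcal A_0+\mathcal F)=0$ and $r_0>1$, then by compactness and continuity in $\lambda$ there is $\lambda_0>0$ with $r_{\lambda_0}=1$. Krein--Rutman then gives an eigenvalue $\lambda_0>0$ of $\mathcal A_0+\mathcal F$, contradicting $s=0$.

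The equality case is the remaining piece: $r_0=1$ corresponds exactly to $0$ being a spectral value with a positive eigenvector, and the two strict cases exclude the other signs. I expect the main obstacle to be the rigorous identification of the spectrum of $\mathcal A_0+\mathcal F$ with the points where $1\in\sigma(\mathcal F(\lambda-\mathcal A_0)^{-1})$, which is needed for the strict-inequality case. My plan is not to redo this but to invoke Thieme's theorem directly, checking that its hypotheses (resolvent positivity, $s(\mathcal A_0)<0$, positivity of $\mathcal F$, and generation of a positive semigroup) are precisely (S1)--(S4).
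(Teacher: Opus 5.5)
Your proposal is essentially the paper's proof. The paper's proof consists only of invoking Thieme's reproduction-number theorem for resolvent-positive operators with $\mathcal K=-\mathcal F\mathcal A_0^{-1}$ under (S1)--(S4), and your Neumann-series and Krein--Rutman steps merely unpack that theorem.

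If you want the sketch to stand without the citation, your equality-case sentence does not follow. $\mathcal R_0=1$ gives only $s(\mathcal A_0+\mathcal F)\ge 0$. Excluding $s>0$ needs $s>0\Rightarrow\mathcal R_0>1$, which neither of your strict cases supplies. One way to close it: if $s>0$ and $\mathcal R_0=1$, then $r(\mathcal F(\lambda-\mathcal A_0)^{-1})=1$ for all $\lambda\in[0,s]$. The analytic Fredholm theorem, applied to the compact analytic family $\lambda\mapsto\mathcal F(\lambda-\mathcal A_0)^{-1}$, forbids this.
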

\begin{proof}
This is the infinite-dimensional reproduction-number theorem for resolvent-positive
operators, applied to \(\mathcal K=-\mathcal F\mathcal A_0^{-1}\).
\end{proof}

\subsection{Local nonlinear interpretation}
\begin{assumption}[Local nonlinear linearization]\label{ass:r0_local_nonlinear}
The reduced autonomous nonlinear problem is locally well posed near
\((0,R^\circ)\) and the associated semiflow is Fr\'echet differentiable there,
with linearization generated by \(\mathcal A_0+\mathcal F\).
\end{assumption}

\begin{proposition}[Local threshold]\label{prop:r0_local_threshold}
Assume Theorem~\ref{thm:r0_sign} and Assumption~\ref{ass:r0_local_nonlinear}. Then:
\begin{enumerate}[label=\textup{(\alph*)}]
\item \(\mathcal R_0<1\) implies local asymptotic stability of \((0,R^\circ)\).
\item \(\mathcal R_0>1\) implies linear instability of \((0,R^\circ)\).
\end{enumerate}
\end{proposition}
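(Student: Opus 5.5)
The plan is to combine Theorem~\ref{thm:r0_sign} with the principle of linearized stability for the semiflow supplied by Assumption~\ref{ass:r0_local_nonlinear}. First, I will upgrade the spectral bound $s(\mathcal A_0+\mathcal F)$ to the growth bound $\omega_0$ of the positive $C_0$-semigroup $T(t)$ generated by $\mathcal A_0+\mathcal F$ on the $L^1$-space $\mathfrak X$ (Assumption~\ref{ass:r0_semigroup}(S3)). For positive semigroups on $L^1$-type spaces (AL-spaces), Weis' theorem (or Derndinger's result) gives $s(\mathcal A_0+\mathcal F)=\omega_0(T)$. This is the key reason to work in $L^1$: no eventual compactness of the transport semigroup is needed. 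I will also handle the resource component. At $(0,R^\circ)$ the resource equation linearizes to $\dot r=\partial_RF(R^\circ,y^\ast)\,r-\rho(R^\circ)\int\kappa z$. This is triangular over the $z$-equation, and its diagonal entry $\partial_RF<0$ by Assumption~\ref{ass:r0_resource_eq}. So the full linearized growth bound equals $\max\{\omega_0(T),\partial_RF(R^\circ,y^\ast)\}$. I will read Assumption~\ref{ass:r0_local_nonlinear} as saying the linearization is this triangular system, with $\mathcal A_0+\mathcal F$ governing the population block.

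For (a): if $\mathcal R_0<1$, Theorem~\ref{thm:r0_sign} gives $s<0$, hence $\omega_0<0$. The linearized semiflow therefore satisfies $\|DS_t(0,R^\circ)\|\le Me^{-\varepsilon t}$ for some $\varepsilon>0$. I will then choose $t_0$ with $Me^{-\varepsilon t_0}\le 1/4$. Fréchet differentiability of $S_{t_0}$ at the equilibrium gives a neighbourhood on which
\[
\|S_{t_0}(w)-(0,R^\circ)\|\le \tfrac12\|w-(0,R^\circ)\|.
\]
Iterating this discrete contraction shows geometric convergence along the times $kt_0$. Continuity of the semiflow on $[0,t_0]$, from local well-posedness and the continuous dependence of Theorem~\ref{thm:wp_continuous_dependence} restricted to the reduced problem, then interpolates between those times and yields stability. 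Positivity is not needed for this step.

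For (b): if $\mathcal R_0>1$, then $s(\mathcal A_0+\mathcal F)>0$. By Krein--Rutman-type results for positive semigroups (the spectral bound of a positive generator belongs to the spectrum), $\lambda_0:=s>0$ lies in $\sigma(\mathcal A_0+\mathcal F)$. Hence $\omega_0\ge\lambda_0>0$, and $\|T(t)\|$ grows exponentially, so the linearized problem at $(0,R^\circ)$ is unstable. That is exactly the linear instability claimed, and nothing nonlinear needs to be shown. If desired, I will exhibit this more concretely using the eigenfunction $\psi^\ast$ of Theorem~\ref{thm:r0_basic}\ref{item:r0_eig}. For the eigenvalue $\lambda_0$ of the generator, the standard construction $z=(\lambda_0-\mathcal A_0)^{-1}$ applied to recruitment data gives a positive eigenvector with $\lambda_0$ characterized by $r(\mathcal F(\lambda_0-\mathcal A_0)^{-1})=1$. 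Thus $T(t)z=e^{\lambda_0 t}z$, giving an explicitly growing positive mode.

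The main obstacle is the passage from spectral bound to growth bound in (a), together with the resource coupling. I will try the Weis theorem first, since $\mathfrak X$ is $L^1$. Should the state also include $\mathbb R$ for the resource, the product $\mathfrak X\times\mathbb R$ is still an AL-space, but the linearized resource block is not positive (its off-diagonal term $-\rho\int\kappa z$ is negative). I will avoid this by the triangular argument, estimating $r(t)$ by variation of constants once $z(t)$ decays exponentially.
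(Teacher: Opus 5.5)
Your proposal follows the same route as the paper, whose proof is simply an appeal to the principle of linearized stability via Theorem~\ref{thm:r0_sign} and Assumption~\ref{ass:r0_local_nonlinear}. Your version is more complete: it supplies the step $s(\mathcal A_0+\mathcal F)=\omega_0$ for positive $C_0$-semigroups on $L^1$, without which the sign of the spectral bound would not give exponential decay, and it treats the resource component through the block-triangular linearization with $\partial_R F(R^\circ,y^\ast)<0$; the paper leaves both points implicit. The only loose end is your optional eigenvector remark in (b): $\psi^\ast$ is the eigenfunction of $-\mathcal F\mathcal A_0^{-1}$ (the case $\lambda=0$), not of $\mathcal F(\lambda_0-\mathcal A_0)^{-1}$, and obtaining the latter would need a compactness property the hypotheses do not provide. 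Your main argument for (b), namely $\omega_0\ge s>0$, which holds for every $C_0$-semigroup, does not depend on it.
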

\begin{proof}
Standard principle of linearized stability for semiflows, using Theorem~\ref{thm:r0_sign} and Assumption~\ref{ass:r0_local_nonlinear}.
\end{proof}

\subsection{Scope}
The preceding results concern only the reduced autonomous problem at extinction.
No statement is made about:
global extinction when \(\mathcal R_0<1\);
uniform persistence when \(\mathcal R_0>1\);
a threshold theorem for the full nonautonomous multi-zone system; or
nonlinear asymptotics away from \((0,R^\circ)\).
The mathematically justified role of
\(\mathcal R_0(y^\ast,\bar q)=r(\mathcal K)\)
is:
\emph{local viability threshold for the reduced autonomous model at extinction,
conditional on the semigroup embedding hypothesis} (Assumption~\ref{ass:r0_semigroup}).

\section{Stationary states under a parametrized nonlinear renewal reduction}%
\label{sec:stationary}
We remain in the reduced autonomous regime \eqref{eq:r0_reduction}.
The goal is existence only; no uniqueness, bifurcation, or stability statement is made.

\subsection{Stationary problem}
A stationary pair \((x^\ast,R^\ast)\in X_+\times\mathbb R_+\) satisfies
\begin{equation}\label{eq:stationary_pde}
\partial_l(g\,x^\ast)+\partial_c(h\,x^\ast)
=-(\mu+\bar q\,\sigma\,\eta)x^\ast,
\end{equation}
with renewal boundary
\(g(\mathcal N[x^\ast],R^\ast,y^\ast,l_0,c)\,x^\ast(l_0,c)
=\mathcal B[x^\ast,R^\ast,y^\ast](c)\),
and resource closure
\(0=F(R^\ast,y^\ast)-\rho(R^\ast)\int\kappa\,x^\ast\,dc\,dl\).

\subsection{Parametrized operator hypothesis}
\begin{assumption}[Parametrized stationary renewal family]%
\label{ass:stationary_family}
There exists a family
\((\mathcal K_\alpha)_{\alpha\ge 0}\),
\(\mathcal K_\alpha:\mathcal V\to\mathcal V\)
bounded, positive, compact, such that:
\begin{enumerate}[label=\textup{(F\arabic*)}]
\item\label{ass:F_zero}
\(\mathcal K_0=\mathcal K(y^\ast,\bar q)\).
\item\label{ass:F_cont}
\(\alpha\mapsto \mathcal K_\alpha\) is continuous in operator norm.
\item\label{ass:F_mono}
\(0\le \alpha_1\le \alpha_2\) implies
\(\mathcal K_{\alpha_2}\psi\le \mathcal K_{\alpha_1}\psi\) for all
\(\psi\in\mathcal V_+\).
\item\label{ass:F_limit}
\(\limsup_{\alpha\to\infty}r(\mathcal K_\alpha)<1\).
\item\label{ass:F_rep}
If \(\mathcal K_\alpha\psi=\psi\) with
\(\psi\in\mathcal V_+\setminus\{0\}\),
then there exists a stationary weak solution
\((x^{(\alpha,\psi)},R^{(\alpha)})\)
whose inflow profile is \(\alpha\psi(c)\) and whose feedback
is compatible with the construction of \(\mathcal K_\alpha\).
\end{enumerate}
\end{assumption}

\begin{remark}[Interpretation of \(\alpha\)]
The scalar \(\alpha\) parametrizes the nonlinear feedback level. In the concrete class
verified in Appendix~\ref{app:stationary_family}, \(\alpha\) equals the
resource-weighted total abundance
\(\int\kappa x^\ast\,dc\,dl\), and \(\mathcal K_\alpha\) encodes the
density-dependent reduction in fecundity and increase in mortality.
\end{remark}

\begin{theorem}[Existence of a nontrivial stationary state]%
\label{thm:stationary_existence}
Assume Assumptions~\ref{ass:model_vital_rates}, \ref{ass:model_birth_operator}, \ref{ass:r0_resource_eq}, \ref{ass:r0_kernel}, and~\ref{ass:stationary_family}. If
\(\mathcal R_0(y^\ast,\bar q)=r(\mathcal K_0)>1\),
then the reduced stationary problem admits at least one nontrivial state
\((x^\ast,R^\ast)\in X_+\times\mathbb R_+\), \(x^\ast\not\equiv 0\).
\end{theorem}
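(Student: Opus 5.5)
The argument is an intermediate-value argument for the scalar function $\alpha\mapsto r(\mathcal K_\alpha)$ on $[0,\infty)$. Once we find $\alpha^\ast>0$ with $r(\mathcal K_{\alpha^\ast})=1$, Krein--Rutman produces a positive fixed point. Assumption~\ref{ass:stationary_family}\ref{ass:F_rep} then converts that fixed point into a stationary weak solution.

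\textbf{Step 1 (continuity of the spectral radius).} Define $\varrho(\alpha):=r(\mathcal K_\alpha)$. By \ref{ass:F_cont}, $\alpha\mapsto\mathcal K_\alpha$ is norm continuous, and each $\mathcal K_\alpha$ is compact and positive. The upper and lower semicontinuity arguments in the proof of Theorem~\ref{thm:r0_cont} use only norm convergence of the operators. The upper bound comes from $r(T)=\inf_p\|T^p\|^{1/p}$, and the lower bound comes from the Riesz projection around the eigenvalue $r(\hat{\mathcal K})$, which exists by Krein--Rutman whenever $r>0$. These arguments transfer verbatim, so $\varrho$ is continuous on $[0,\infty)$.

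Monotonicity follows from \ref{ass:F_mono}: since $0\le\mathcal K_{\alpha_2}\le\mathcal K_{\alpha_1}$ on $\mathcal V_+$, monotonicity of the spectral radius for positive operators (as in Theorem~\ref{thm:r0_basic}) gives that $\varrho$ is nonincreasing. This is not needed for existence, but it makes the zero set an interval.

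\textbf{Step 2 (crossing).} By \ref{ass:F_zero} and the hypothesis, $\varrho(0)=\mathcal R_0(y^\ast,\bar q)>1$. By \ref{ass:F_limit} there is $A$ with $\varrho(A)<1$. The intermediate value theorem yields $\alpha^\ast\in(0,A)$ with $\varrho(\alpha^\ast)=1$. We may take $\alpha^\ast:=\inf\{\alpha:\varrho(\alpha)\le1\}$, which is positive by continuity, so $\alpha^\ast>0$ strictly.

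\textbf{Step 3 (eigenvector and realization).} Since $\mathcal K_{\alpha^\ast}$ is positive and compact on $L^1(\Omega_c)$ with $r(\mathcal K_{\alpha^\ast})=1>0$, Krein--Rutman (as in Theorem~\ref{thm:r0_basic}\ref{item:r0_eig}) gives $\psi\in\mathcal V_+\setminus\{0\}$ with $\mathcal K_{\alpha^\ast}\psi=\psi$. Applying \ref{ass:F_rep}, we obtain a stationary weak solution $(x^\ast,R^\ast)=(x^{(\alpha^\ast,\psi)},R^{(\alpha^\ast)})$ whose inflow profile at $l=l_0$ is $\alpha^\ast\psi\not\equiv0$.

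Nontriviality follows from the boundary relation $g\,x^\ast(l_0,\cdot)=\alpha^\ast\psi$ together with $g\ge g_{\min}>0$. Along characteristics issuing from the support of $\psi$, the survival factor $\ell$ is strictly positive, so $x^\ast\ge0$ and $x^\ast\not\equiv0$ on a set of positive measure. Membership $(x^\ast,R^\ast)\in X_+\times\mathbb R_+$ is part of the conclusion of \ref{ass:F_rep}, together with the resource closure.

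\textbf{Main obstacle.} The delicate points are two. First, ensuring $\alpha^\ast>0$: at $\alpha=0$ the inflow $\alpha\psi$ vanishes and the solution would be trivial, which is why strict supercriticality matters. Second, ensuring that the realization in \ref{ass:F_rep} really yields $x^\ast\not\equiv0$ and not merely a zero-measure inflow. I would handle the latter with the characteristic representation, as sketched above. The spectral-radius continuity is otherwise routine given Theorem~\ref{thm:r0_cont}.
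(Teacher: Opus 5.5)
Your proposal is correct and matches the paper's proof essentially step for step. Both use continuity of $\alpha\mapsto r(\mathcal K_\alpha)$ via the argument of Theorem~\ref{thm:r0_cont}, the intermediate value theorem between $r(\mathcal K_0)>1$ and (F4), Krein--Rutman at $\alpha^\ast$, and realization through (F5) with nonzero inflow $\alpha^\ast\psi$. You are right that (F3) is not needed for the crossing (the paper cites it anyway), and your characteristic-based nontriviality argument is unnecessary, since $x^\ast\not\equiv 0$ already follows from the nonzero inflow $\alpha^\ast\psi$, as in the paper.
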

\begin{proof}
By Assumption~\ref{ass:F_zero} and Proposition~\ref{prop:r0_compact}, \(r(\mathcal K_0)>1\).
By Assumption~\ref{ass:F_cont}, \(\alpha\mapsto r(\mathcal K_\alpha)\) is continuous (using
the spectral-radius continuity argument of Theorem~\ref{thm:r0_cont}).
By Assumption~\ref{ass:F_mono}, \(r(\mathcal K_\alpha)\) is nonincreasing.
By Assumption~\ref{ass:F_limit}, \(r(\mathcal K_\alpha)<1\) for large \(\alpha\).
The intermediate value theorem yields \(\alpha^\ast>0\) with
\(r(\mathcal K_{\alpha^\ast})=1\).
Krein--Rutman \cite{KreinRutman1948,liu2025bidirectional,Deimling1985} gives \(\psi^\ast\in\mathcal V_+\setminus\{0\}\) with
\(\mathcal K_{\alpha^\ast}\psi^\ast=\psi^\ast\).
By Assumption~\ref{ass:F_rep}, there exists a stationary solution with inflow
\(\alpha^\ast\psi^\ast\neq 0\), hence \(x^\ast\not\equiv 0\).
\end{proof}

This is purely existential. It does not imply uniqueness, bifurcation, stability, or
existence for the full multi-zone system.

\section{Finite-horizon optimal harvesting}\label{sec:optimal_harvest}
Fix \(T>0\) and \(Y\in C([0,T];\mathbb R^{d_Y})\). For each
\(q\in\mathcal Q_{\mathrm{ad}}^T\), let \((x^{(q)},R^{(q)})\) denote
the unique weak solution (Theorem~\ref{thm:wp_global}).

\subsection{Objective functional}
For \(s\in\mathcal S\), let
\(p_s\in C(\Omega_l\times\Omega_c\times\mathbb R^{d_Y})\),
\(0\le p_s\le p_{\max}\),
\(\Theta_s\in L^\infty_+(\Omega_l\times\Omega_c)\),
and \(\delta\ge 0\), \(\gamma\ge 0\). Define
\begin{equation}\label{eq:ocp_J}
\begin{aligned}
J(q)
&:=
\sum_{s}\int_0^T e^{-\delta t}
\int_{\Omega_l}\!\int_{\Omega_c}
p_s\,u_s^{(q)}\,x_s^{(q)}\,dc\,dl\,dt
+\sum_s\int_{\Omega_l}\!\int_{\Omega_c}\Theta_s\,x_s^{(q)}(T)\,dc\,dl
-\frac{\gamma}{2}\sum_s\int_0^T|q_s|^2\,dt.
\end{aligned}
\end{equation}

\subsection{Regularized admissible class}
Fix \(L_q>0\) and define
\begin{equation}\label{eq:ocp_reg_controls}
\mathcal Q_{\mathrm{ad},\mathrm{reg}}^T
:=
\Big\{
q\in W^{1,\infty}(0,T;\mathbb R^S):
0\le q_s\le q_{s,\max},\
\|\dot q_s\|_{L^\infty}\le L_q
\Big\}.
\end{equation}

\begin{lemma}[Compactness]\label{lem:ocp_compact}
\(\mathcal Q_{\mathrm{ad},\mathrm{reg}}^T\) is compact in
\(C([0,T];\mathbb R^S)\).
\end{lemma}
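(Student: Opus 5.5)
The plan is to apply the Arzel\`a--Ascoli theorem in \(C([0,T];\mathbb R^S)\) and then check that the set is closed. Each \(q\in\mathcal Q_{\mathrm{ad},\mathrm{reg}}^T\) has an absolutely continuous representative. This follows from the identification \(W^{1,\infty}(0,T)\subset W^{1,1}(0,T)\) with Lipschitz functions. For this representative,
\[
|q_s(t)-q_s(t')|\le \int_{t'}^{t}|\dot q_s(\tau)|\,d\tau\le L_q|t-t'|,\qquad 0\le q_s(t)\le q_{s,\max},
\]
for all \(t,t'\in[0,T]\) and \(s\in\mathcal S\). The box constraint holds pointwise everywhere, not just almost everywhere, by continuity. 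So the class is uniformly bounded by \(\max_s q_{s,\max}\) and equi-Lipschitz, hence equicontinuous. Arzel\`a--Ascoli then shows it is relatively compact in \(C([0,T];\mathbb R^S)\).

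It remains to show the set is closed in the sup norm. Let \(q^{(n)}\in\mathcal Q_{\mathrm{ad},\mathrm{reg}}^T\) with \(q^{(n)}\to q\) uniformly on \([0,T]\).

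The pointwise bounds \(0\le q_s\le q_{s,\max}\) pass to the limit at once.

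For the derivative bound, I take limits in the Lipschitz inequality: \(|q_s(t)-q_s(t')|\le L_q|t-t'|\). Hence \(q_s\) is Lipschitz with constant \(L_q\). By Rademacher's theorem in one dimension, or the fundamental theorem for absolutely continuous functions, \(q_s\in W^{1,\infty}(0,T)\) and \(\|\dot q_s\|_{L^\infty}\le L_q\).

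An alternative route to the derivative bound is weak-\(*\) compactness. The sequence \(\dot q^{(n)}_s\) is bounded in \(L^\infty=(L^1)^\ast\), so a subsequence converges weak-\(*\) to some \(v\). Passing to the limit in \(q^{(n)}_s(t)=q^{(n)}_s(0)+\int_0^t\dot q^{(n)}_s\) identifies \(v=\dot q_s\). Weak-\(*\) lower semicontinuity of the norm then gives \(\|v\|_{L^\infty}\le L_q\). I would present the Lipschitz-limit version because it is shorter.

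Thus \(q\in\mathcal Q_{\mathrm{ad},\mathrm{reg}}^T\), so the set is closed. A closed, relatively compact subset of a complete metric space is compact, which proves the lemma.

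The only point needing care is the identification between the \(W^{1,\infty}\) constraint \(\|\dot q_s\|_{L^\infty}\le L_q\) and the pointwise \(L_q\)-Lipschitz property, in both directions. This identification is standard: a function on an interval is \(L\)-Lipschitz exactly when it lies in \(W^{1,\infty}\) with derivative bounded by \(L\). I would cite it rather than reprove it. Beyond that I expect no real obstacle. The argument is finite-dimensional in the range and uses nothing from the earlier sections.
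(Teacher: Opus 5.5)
Your proposal is correct and follows the same route as the paper: Arzel\`a--Ascoli from the uniform box bound and the equi-Lipschitz estimate, then closedness because the box and Lipschitz constraints pass to uniform limits. Your version adds detail the paper's one-line proof leaves implicit, namely the choice of continuous representatives and the identification of the \(L_q\)-Lipschitz property with \(\|\dot q_s\|_{L^\infty}\le L_q\).
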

\begin{proof}
Arzel\`a--Ascoli (cf.\ \cite{Barbu2010,liang2026separation,yosida2012functional}): uniform boundedness and equi-Lipschitz continuity yield a uniformly
convergent subsequence; box and Lipschitz constraints pass to the limit.
\end{proof}

\begin{proposition}[Continuity of \(J\)]\label{prop:ocp_J_continuous}
\(J:\mathcal Q_{\mathrm{ad},\mathrm{reg}}^T\to\mathbb R\)
is continuous w.r.t.\ the \(C([0,T];\mathbb R^S)\) topology.
\end{proposition}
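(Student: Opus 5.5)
Take a sequence \(q^{(n)}\to q\) in \(C([0,T];\mathbb R^S)\) with all \(q^{(n)},q\in\mathcal Q_{\mathrm{ad},\mathrm{reg}}^T\), and show \(J(q^{(n)})\to J(q)\). Uniform convergence on \([0,T]\) is the same as convergence in \(L^\infty(0,T;\mathbb R^S)\) for continuous functions. So Theorem~\ref{thm:wp_continuous_dependence} applies with the data \((\phi,R_0,Y)\) held fixed, and gives
\[
\sup_{t\in[0,T]}\|x^{(q^{(n)})}(t)-x^{(q)}(t)\|_X+\sup_{t\in[0,T]}|R^{(q^{(n)})}(t)-R^{(q)}(t)|\to 0 .
\]
The a priori bounds (Lemmas~\ref{lem:wp_L1}--\ref{lem:wp_Linfty}) depend only on the structural data, \(Y([0,T])\), \(T\), \(\phi\) and the bound \(q_{s,\max}\), not on the particular control. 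Hence there is one \(M\) with \(\sup_t\|x^{(q^{(n)})}(t)\|_X\le M\) and \(\sup_t|R^{(q^{(n)})}(t)|\le M\) for all \(n\). The plan is to handle the three terms of \eqref{eq:ocp_J} separately.

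\emph{Terminal term.} Since \(\Theta_s\in L^\infty\),
\[
\Big|\int\Theta_s\big(x_s^{(q^{(n)})}(T)-x_s^{(q)}(T)\big)\Big|\le\|\Theta_s\|_\infty\,\|x^{(q^{(n)})}(T)-x^{(q)}(T)\|_X\to 0 .
\]

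\emph{Penalty term.} The controls are uniformly bounded by \(q_{s,\max}\), so \(\big||q_s^{(n)}|^2-|q_s|^2\big|\le 2q_{s,\max}|q_s^{(n)}-q_s|\), and the integral over \([0,T]\) tends to zero.

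\emph{Harvest term (main step).} Write \(u_s^{(n)}=q_s^{(n)}\sigma_s\,\eta_s(\widehat B_s[x^{(n)}(t)],Y(t))\), and split
\[
p_s u_s^{(n)}x_s^{(n)}-p_s u_s x_s=p_s\big(u_s^{(n)}-u_s\big)x_s^{(n)}+p_s u_s\big(x_s^{(n)}-x_s\big).
\]
The second piece is bounded in \(L^1(Q_T)\) by
\[
p_{\max}\,q_{s,\max}\,\eta_{\max}\,T\sup_t\|x^{(n)}(t)-x(t)\|_X .
\]
For the first piece, we have \(|u_s^{(n)}-u_s|\le |q_s^{(n)}-q_s|\,\eta_{\max}+q_{s,\max}\,|\eta_s(\widehat B_s[x^{(n)}],Y)-\eta_s(\widehat B_s[x],Y)|\). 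The mean value theorem applies to \(\eta_s\in C^1\) because its arguments stay in the compact set \([0,C_{\widehat B}M]\times Y([0,T])\). Combined with \eqref{eq:wp_Bhat_Lip}, this gives
\[
\|u_s^{(n)}-u_s\|_{L^\infty}\le C_M\big(\|q^{(n)}-q\|_\infty+\sup_t\|x^{(n)}(t)-x(t)\|_X\big),
\]
exactly the estimate already used in the proof of Theorem~\ref{thm:wp_continuous_dependence}. Multiplying by the uniform bound \(\sup_t\|x^{(n)}_s(t)\|_{L^1}\le M\) and by \(p_{\max}\), and integrating against \(e^{-\delta t}\le 1\), shows that this piece also tends to zero.

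The only delicate point is the uniformity of the constant \(M\), which is needed for the local Lipschitz bound on \(\eta_s\) and for the bound on \(x_s^{(n)}\). This is why I would first check explicitly that the a priori bounds of Section~\ref{sec:wellposedness} depend on \(q\) only through \(q_{s,\max}\). Adding the three estimates gives \(J(q^{(n)})\to J(q)\), which is sequential continuity. On the metric space \(\mathcal Q_{\mathrm{ad},\mathrm{reg}}^T\subset C([0,T];\mathbb R^S)\), sequential continuity is continuity.
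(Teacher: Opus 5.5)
Your proof is correct and follows the paper's route: Theorem~\ref{thm:wp_continuous_dependence} gives uniform-in-time $L^1$ convergence of the states, \eqref{eq:wp_Bhat_Lip} together with the regularity of $\eta_s$ gives uniform convergence of $u_s^{(q^{(n)})}$, and each term of $J$ then converges. The only difference is that you replace the paper's appeal to dominated convergence with explicit term-by-term $L^1$ estimates, and you make explicit the control-independent a priori bound $M$ that the paper uses tacitly.
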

\begin{proof}
Let \(q^n\to q^\ast\) uniformly. By Theorem~\ref{thm:wp_continuous_dependence},
\(\sup_t\|x^{(q^n)}(t)-x^{(q^\ast)}(t)\|_X+\sup_t|R^{(q^n)}(t)-R^{(q^\ast)}(t)|\to 0\).
Using \eqref{eq:wp_Bhat_Lip} and continuity of \(\eta_s\), the harvest mortality
\(u_s^{(q^n)}\to u_s^{(q^\ast)}\) uniformly. Dominated convergence gives
\(J(q^n)\to J(q^\ast)\).
\end{proof}

\begin{theorem}[Existence of an optimal control]\label{thm:ocp_existence}
Problem \(\sup_{q\in\mathcal Q_{\mathrm{ad},\mathrm{reg}}^T}J(q)\) admits at least one
maximizer \(q^\ast\in\mathcal Q_{\mathrm{ad},\mathrm{reg}}^T\).
\end{theorem}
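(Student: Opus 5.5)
The argument is the direct method of the calculus of variations, i.e.\ the Weierstrass extreme value theorem on a compact set. By Lemma~\ref{lem:ocp_compact}, \(\mathcal Q_{\mathrm{ad},\mathrm{reg}}^T\) is a compact subset of \(C([0,T];\mathbb R^S)\). It is also nonempty, since the constant control \(q\equiv 0\) belongs to it. By Proposition~\ref{prop:ocp_J_continuous}, \(J\) is continuous on this set for the uniform topology. A continuous real function on a nonempty compact metric space is bounded and attains its supremum, so a maximizer \(q^\ast\) exists.

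To keep the argument self-contained in sequential form, I would carry out the following steps.

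\begin{enumerate}
\item \textbf{Finiteness of the supremum.} For every admissible \(q\), Theorem~\ref{thm:wp_global} together with Lemma~\ref{lem:wp_L1} bounds \(\sup_t\|x^{(q)}(t)\|_X\) by a constant depending only on \(\phi\), \(T\) and \(Y([0,T])\). This bound is uniform in \(q\), because \(0\le u_s^{(q)}\le q_{s,\max}\eta_{\max}\) only adds nonnegative mortality, which the \(C_{\mathrm{div}}\) argument does not see. Combined with \(0\le p_s\le p_{\max}\), \(\Theta_s\in L^\infty\) and \(0\le q_s\le q_{s,\max}\), this gives \(|J(q)|\le C\) uniformly. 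Hence \(J^\ast:=\sup J<\infty\).
\item \textbf{Maximizing sequence.} Take \(q^n\) admissible with \(J(q^n)\to J^\ast\).
\item \textbf{Compactness.} Extract a subsequence converging uniformly to some \(q^\ast\in\mathcal Q_{\mathrm{ad},\mathrm{reg}}^T\) by Lemma~\ref{lem:ocp_compact}. The box constraints and the Lipschitz bound \(L_q\) are closed under uniform convergence; the Lipschitz bound is inherited pointwise.
\item \textbf{Conclusion.} Proposition~\ref{prop:ocp_J_continuous} gives \(J(q^\ast)=\lim J(q^n)=J^\ast\).
\end{enumerate}

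The main point to verify is compatibility of topologies. Theorem~\ref{thm:wp_continuous_dependence}, which underlies Proposition~\ref{prop:ocp_J_continuous}, is stated for control convergence in \(L^\infty(0,T;\mathbb R^S)\). On continuous functions, uniform convergence coincides with convergence in \(L^\infty\), so no gap arises. I would also check two details inside the continuity of \(J\). First, the terminal term \(\int\Theta_s x_s^{(q)}(T)\) converges because \(x^{(q^n)}(T)\to x^{(q^\ast)}(T)\) in \(X\). Second, the quadratic penalty \(\frac{\gamma}{2}\int_0^T|q_s|^2\) converges under uniform convergence. Since these are all covered by the cited proposition, I expect no serious obstacle beyond this bookkeeping.
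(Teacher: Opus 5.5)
Your proposal is correct and follows the paper's proof. The paper likewise applies the Weierstrass extreme value theorem, using compactness of $\mathcal Q_{\mathrm{ad},\mathrm{reg}}^T$ in $C([0,T];\mathbb R^S)$ (Lemma~\ref{lem:ocp_compact}) and continuity of $J$ (Proposition~\ref{prop:ocp_J_continuous}); your extra checks (nonemptiness via $q\equiv 0$, agreement of the uniform and $L^\infty$ norms on continuous controls, and $q$-uniformity of the state bounds) are sound details the paper leaves implicit.
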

\begin{proof}
\(J\) is bounded (Theorem~\ref{thm:wp_global} gives uniform state bounds) and continuous
(Proposition~\ref{prop:ocp_J_continuous}) on the compact set
\(\mathcal Q_{\mathrm{ad},\mathrm{reg}}^T\) (Lemma~\ref{lem:ocp_compact}).
Weierstrass's extreme value theorem applies (cf.\ \cite{rudin2021principles,yu2026size}).
\end{proof}

\subsection{Scope}
This is an existence result only. It does not provide a Fr\'echet derivative of
\(q\mapsto(x^{(q)},R^{(q)})\), a backward adjoint equation, or Pontryagin conditions.
The role of \(\mathcal Q_{\mathrm{ad},\mathrm{reg}}^T\) is purely compactness:
\(\mathcal Q_{\mathrm{ad},\mathrm{reg}}^T\Subset C([0,T];\mathbb R^S)\).

\section{Discussion}\label{sec:discussion}
This paper establishes a rigorous analytical foundation for a class of physiologically
structured transport--renewal models coupled with nonlocal environmental feedback and
dynamic resources. By deliberately separating the full nonautonomous multi-zone system
from its autonomous reduction, we have clarified the mathematical boundaries of
well-posedness, threshold behavior, and optimal control.

Our contributions are:
\begin{enumerate}[label=\textup{(\roman*)}]
\item \textbf{Well-posedness and optimal control.} For the full nonautonomous PDE--ODE
system, we proved global existence, uniqueness, and continuous dependence for weak
solutions, alongside existence of finite-horizon optimal harvesting controls over a
compact Lipschitz-regular admissible class. The contraction is performed in the
\(L^1\times\mathbb R^S\) topology, with the \(L^\infty\) bound recovered a posteriori
from characteristic-based estimates. The choice of this weaker contraction topology
is dictated by the boundary renewal term, whose \(L^\infty\) contribution does not
vanish as the time interval shrinks.

\item \textbf{Spectral threshold theory.} In the reduced autonomous regime, we
constructed a positive compact next-generation operator \(\mathcal K\). Conditional on
the semigroup embedding hypothesis (Assumption~\ref{ass:r0_semigroup}), its spectral radius
\(\mathcal R_0=r(\mathcal K)\) determines the sign of the spectral bound of the
linearized generator.

\item \textbf{Stationary states.} In the spirit of \cite{yu2026fullcovariance,DiekmannGetto2005,barril2022formulation,yu2026pattern,yang2023threshold}, we proved that supercriticality
\(\mathcal R_0>1\) guarantees existence of a nontrivial stationary state, provided the
nonlinear stationary feedback can be parametrized as a continuous, monotone-decreasing
family of compact operators (Assumption~\ref{ass:stationary_family}).
\end{enumerate}

Several questions remain open. 
The most important are the construction of a multi-zone threshold operator and the verification of the semigroup hypothesis (Assumption~\ref{ass:r0_semigroup}) for the present
transport--renewal structure. The latter requires a domain characterization and a generation argument. 
Other open problems include the proof of uniform global persistence for the nonlinear semiflow, the exact bifurcation structure at \(\mathcal R_0=1\), and the derivation of first-order necessary optimality conditions via a rigorous backward
adjoint system.
Addressing these requires additional monotonicity, compactness, and
global semiflow structures beyond the present framework.

\appendix
\section{Frozen linear transport problem with inflow boundary}%
\label{app:linear_transport}
Throughout, Assumptions~\ref{ass:model_vital_rates} and~\ref{ass:model_condition_boundary_orientation} are in force.

\subsection{Setup and characteristics}
For \(s\in\mathcal S\), let the frozen coefficients \(a_s,b_s,d_s,f_s,\zeta_s\) be as
in \eqref{eq:wp_frozen_ab}--\eqref{eq:wp_frozen_zeta}, with
\(a_s\ge g_{\min}>0\), \(b_s(t,l,0)\le 0\), \(b_s(t,l,c_m)\ge 0\), and \(d_s\ge 0\).
The frozen problem is \eqref{eq:wp_frozen_problem}.

Let \((\Lambda_s(\tau;t,l,c),\Gamma_s(\tau;t,l,c))\) denote the backward characteristic,
and define the integrating factor
\begin{equation}\label{eq:appA_integrating_factor}
E_s(\tau,t;l,c)
:=
\exp\!\Big(
-\int_\tau^t
\big[\partial_l a_s+\partial_c b_s+d_s\big](r,\Lambda_s(r),\Gamma_s(r))\,dr
\Big).
\end{equation}

Define the divergence-mortality excess
\begin{equation}\label{eq:appA_C_div}
C_{\mathrm{div}}
:=
\big\|(\partial_l a_s+\partial_c b_s)^-\big\|_{L^\infty(Q_T)},
\end{equation}
which is finite by Assumption~\ref{ass:model_vital_rates}\ref{item:V5}. Note that since
\(d_s\ge 0\), the net zeroth-order coefficient
\(\partial_l a_s+\partial_c b_s+d_s\ge -C_{\mathrm{div}}\).

\begin{proposition}[Characteristic formula]\label{prop:appA_representation}
For a classical solution \(z_s\) of \eqref{eq:wp_frozen_problem} and any
\((t,l,c)\in[0,T]\times\Omega_l\times\Omega_c\):
if the backward characteristic reaches \(t=0\),
\begin{equation}\label{eq:appA_rep_init}
z_s(t,l,c)
=
E_s(0,t;l,c)\,\phi_s(\Lambda_s(0),\Gamma_s(0))
+\int_0^t E_s(\tau,t;l,c)\,f_s(\tau,\Lambda_s(\tau),\Gamma_s(\tau))\,d\tau;
\end{equation}
if it reaches \(l=l_0\) at time \(\tau^{\mathrm{ent}}>0\),
\begin{equation}\label{eq:appA_rep_boundary}
z_s(t,l,c)
=
E_s(\tau^{\mathrm{ent}},t;l,c)\,
\frac{\zeta_s(\tau^{\mathrm{ent}},\Gamma_s(\tau^{\mathrm{ent}}))}
{a_s(\tau^{\mathrm{ent}},l_0,\Gamma_s(\tau^{\mathrm{ent}}))}
+\int_{\tau^{\mathrm{ent}}}^t E_s(\tau,t;l,c)\,f_s\,d\tau.
\end{equation}
\end{proposition}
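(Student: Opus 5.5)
We prove the characteristic formula by reducing the PDE to a linear ODE along each characteristic and integrating it. Write the equation in non-conservative form,
\[
\partial_t z_s+a_s\partial_l z_s+b_s\partial_c z_s=-(\partial_l a_s+\partial_c b_s+d_s)z_s+f_s .
\]
Fix \((t,l,c)\) and let \((\Lambda_s(\tau),\Gamma_s(\tau))\) be the backward characteristic through it. It solves \(\dot\Lambda_s=a_s(\tau,\Lambda_s,\Gamma_s)\) and \(\dot\Gamma_s=b_s(\tau,\Lambda_s,\Gamma_s)\) with \((\Lambda_s(t),\Gamma_s(t))=(l,c)\). The frozen coefficients are \(C^1\) in \((l,c)\) and continuous in time, by (V1), (V5) and the continuity of \(\bar x,\bar R,Y\). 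So Picard--Lindel\"of gives a unique local solution, and we continue it backward in \(\tau\).

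\textbf{Exit time.} Since \(\dot\Lambda_s=a_s\ge g_{\min}>0\), \(\Lambda_s\) is strictly increasing in \(\tau\). Going backward, it either stays in \(\Omega_l\) down to \(\tau=0\) or reaches \(l_0\) at a unique time \(\tau^{\mathrm{ent}}\in(0,t]\). It cannot leave through \(l=l_m\), since going backward \(\Lambda_s\) decreases.

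We must also show the characteristic cannot exit through \(c=0\) or \(c=c_m\) backward in time. This is where Assumption~\ref{ass:model_condition_boundary_orientation} is used. Because \(b_s(\cdot,\cdot,0)\le 0\) and \(b_s(\cdot,\cdot,c_m)\ge0\), the strip \(\Omega_c\) is backward invariant for \(\dot\Gamma=b_s\). To see this, reverse time: the reversed field is \(-b_s\), which points inward (or is tangent) at both endpoints. Nagumo's condition, as in Remark~\ref{rem:resource_invariance}, then gives invariance. Tangential cases are handled by uniqueness of \(C^1\) flows, e.g.\ by extending \(b_s\) in a \(C^1\) way and comparing with the constant solutions when \(b_s=0\) on the boundary.

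\textbf{ODE along the characteristic.} Set \(\zeta(\tau):=z_s(\tau,\Lambda_s(\tau),\Gamma_s(\tau))\). For a classical solution the chain rule gives
\[
\dot\zeta(\tau)=-[\partial_l a_s+\partial_c b_s+d_s](\tau,\Lambda_s,\Gamma_s)\,\zeta(\tau)+f_s(\tau,\Lambda_s,\Gamma_s).
\]
By \eqref{eq:appA_integrating_factor}, \(\partial_\tau E_s(\tau,t)=[\partial_l a_s+\partial_c b_s+d_s]E_s\), so
\[
\frac{d}{d\tau}\big(E_s(\tau,t)\zeta(\tau)\big)=E_s(\tau,t)f_s(\tau,\Lambda_s,\Gamma_s).
\]
We integrate from the starting time \(\tau_0\) to \(t\), using \(E_s(t,t)=1\).

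\textbf{The two cases.} If \(\tau_0=0\), then \(\zeta(0)=\phi_s(\Lambda_s(0),\Gamma_s(0))\) and we obtain \eqref{eq:appA_rep_init}. If \(\tau_0=\tau^{\mathrm{ent}}\), then \(\Lambda_s(\tau^{\mathrm{ent}})=l_0\). The boundary condition in \eqref{eq:wp_frozen_problem}, together with division by \(a_s\ge g_{\min}>0\), gives
\[
\zeta(\tau^{\mathrm{ent}})=\zeta_s(\tau^{\mathrm{ent}},\Gamma_s(\tau^{\mathrm{ent}}))\,/\,a_s(\tau^{\mathrm{ent}},l_0,\Gamma_s(\tau^{\mathrm{ent}})),
\]
which yields \eqref{eq:appA_rep_boundary}.

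The main obstacle is the invariance step at the \(c\)-boundary in the tangential case \(b_s=0\). Uniqueness of characteristics rules out crossing there, but only because the coefficients are \(C^1\) and hence locally Lipschitz. A further technical point is that \(f_s\) and \(\zeta_s\) are only measurable, so the formula holds pointwise for classical, continuous data. This is sufficient, since the statement concerns classical solutions.
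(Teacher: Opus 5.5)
Your proposal is correct and follows the standard method-of-characteristics argument: the chain rule along the backward characteristic, the integrating factor $E_s$, and the dichotomy $\tau_0\in\{0,\tau^{\mathrm{ent}}\}$ coming from $a_s\ge g_{\min}$ and the outflow orientation of $b_s$ at $c=0,c_m$; the paper states this proposition without proof and relies on exactly this computation. Three small points to tidy: $d_s$ is, like $f_s$ and $\zeta_s$, only $L^\infty$ (through $q_s\sigma_s\eta_s$ and $\theta_s^{\mathrm{out}}$), so its continuity is also part of what \emph{classical} presupposes; the tangential case at the $c$-boundary is better settled by a Gr\"onwall comparison using $b_s(\tau,\Lambda_s(\tau),0)\le 0$ and the Lipschitz bound in $c$ than by comparison with constant solutions, which need not exist; and your auxiliary function $\zeta(\tau)$ should be renamed to avoid clashing with the inflow datum $\zeta_s$.
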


\subsection{Adjoint regularity}
\begin{proposition}[Adjoint regularity for smooth terminal data]\label{prop:appA_adjoint}
Fix \(s\in\mathcal S\) and \(t\in(0,T]\). For any
\(\chi\in C^1(\overline{\Omega_l\times\Omega_c})\) satisfying
\(\chi(l_m,\cdot)=0\), \(\chi(\cdot,0)=0\), \(\chi(\cdot,c_m)=0\), the backward problem
\begin{equation}\label{eq:appA_adjoint}
\begin{cases}
\partial_\tau\psi+a_s\partial_l\psi+b_s\partial_c\psi=d_s\psi,
& (\tau,l,c)\in(0,t)\times\Omega_l\times\Omega_c,\\[1mm]
\psi(t,l,c)=\chi(l,c),\\[1mm]
\psi(\tau,l_m,c)=0,\quad \psi(\tau,l,0)=0,\quad \psi(\tau,l,c_m)=0,
\end{cases}
\end{equation}
admits a classical solution satisfying, for a constant \(C_{\mathrm{adj}}(T,M)>0\)
depending only on the \(C^1\)-norms of \(a_s,b_s,d_s\) and on \(T\),
\begin{equation}\label{eq:appA_adjoint_bounds}
\|\psi\|_{L^\infty}\le e^{\|d_s\|_\infty T}\|\chi\|_{L^\infty},
\qquad
\|\nabla_{l,c}\psi\|_{L^\infty}\le C_{\mathrm{adj}}(T,M)\|\chi\|_{C^1}.
\end{equation}
\end{proposition}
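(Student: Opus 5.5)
We solve the backward adjoint problem by characteristics and then differentiate the resulting representation. Fix \(s\) and \(t\). For \((\tau,l,c)\) with \(\tau<t\), consider the forward characteristic system
\[
\tfrac{d}{dr}\Lambda=a_s(r,\Lambda,\Gamma),\qquad \tfrac{d}{dr}\Gamma=b_s(r,\Lambda,\Gamma),\qquad (\Lambda,\Gamma)(\tau)=(l,c),\qquad r\in[\tau,t].
\]
Since \(a_s,b_s\) are \(C^1\) in \((l,c)\) with bounded derivatives on the relevant compact set (Assumption~\ref{ass:model_vital_rates}\ref{item:V1},\ref{item:V5} and Lemma~\ref{lem:wp_coeff_bounds}), the flow is well defined and \(C^1\) in \((l,c)\). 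Let \(r^{\mathrm{ex}}(\tau,l,c)\in(\tau,t]\) be the first exit time from \((l_0,l_m)\times(0,c_m)\), or \(t\) if there is no exit. Because \(a_s\ge g_{\min}>0\), and because by Assumption~\ref{ass:model_condition_boundary_orientation} the \(c\)-faces are outflow or tangential, exit can only occur through \(l=l_m\) or through \(c\in\{0,c_m\}\). The candidate solution is
\[
\psi(\tau,l,c)=\mathbf 1_{\{r^{\mathrm{ex}}=t\}}\exp\!\Big(-\int_\tau^t d_s(r,\Lambda(r),\Gamma(r))\,dr\Big)\chi(\Lambda(t),\Gamma(t)).
\]
Along characteristics, \(\tfrac{d}{dr}\psi(r,\Lambda(r),\Gamma(r))=d_s\psi\), so \(\psi\) solves the equation and takes the value \(\chi\) at \(\tau=t\). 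The sup bound \(\|\psi\|_\infty\le e^{\|d_s\|_\infty T}\|\chi\|_\infty\) is immediate. Note that \(d_s\ge0\), so in fact \(|\psi|\le\|\chi\|_\infty\), which is stronger than the stated bound.

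The key point is that the indicator does not destroy regularity, because \(\chi\) vanishes on the outflow boundary. Write \(\widetilde\chi\) for the extension of \(\chi\) by zero outside the rectangle; it is Lipschitz, with constant at most \(\|\chi\|_{C^1}\). We then define \(\psi\) instead through the characteristic flow of suitably extended coefficients, with no exit cut-off:
\[
\psi(\tau,l,c)=\exp\!\Big(-\int_\tau^t d_s\Big)\,\widetilde\chi(\Phi_{\tau\to t}(l,c)).
\]
This agrees with the formula above. A characteristic that leaves through \(l=l_m\) never re-enters, since \(a_s>0\), so it ends where \(\widetilde\chi=0\). A characteristic that leaves through \(c=0\) or \(c=c_m\) also stays outside, provided the extension of \(b_s\) keeps the outflow sign, for example by reflecting \(b_s\) oddly in a way that preserves the inequalities of Assumption~\ref{ass:model_condition_boundary_orientation}. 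The boundary conditions \(\psi(\tau,l_m,\cdot)=\psi(\tau,\cdot,0)=\psi(\tau,\cdot,c_m)=0\) would then follow, because points on those faces flow outward or tangentially. Tangential points are a subtlety addressed below.

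For the gradient bound, we differentiate the composition. By the variational equation and Grönwall, \(\|D_{(l,c)}\Phi_{\tau\to t}\|\le e^{C_1T}\), where \(C_1\) bounds \(\|\nabla a_s\|_\infty+\|\nabla b_s\|_\infty\). Likewise \(\nabla_{l,c}\int_\tau^t d_s\) is bounded by \(T\|\nabla d_s\|_\infty e^{C_1T}\). The chain rule, applied almost everywhere to the Lipschitz \(\widetilde\chi\), gives
\[
\|\nabla\psi\|_\infty\le\big(\|\chi\|_{C^1}+T\|\nabla d_s\|_\infty\|\chi\|_\infty\big)e^{C_1T}=:C_{\mathrm{adj}}\|\chi\|_{C^1}.
\]
Differentiability in \(\tau\) then follows from the equation.

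The main obstacle is the word \emph{classical}. The function \(\widetilde\chi\) is only Lipschitz across the boundary faces. Moreover, on portions where \(b_s=0\) on \(c\in\{0,c_m\}\), characteristics can graze the boundary, so the exit map is discontinuous there. To handle this, I would first treat \(\chi\) compactly supported in the open rectangle minus \(\{l=l_0\}\). Such \(\chi\) vanish in a neighbourhood of the outflow faces, so grazing orbits see \(\widetilde\chi\equiv0\) near exit, and \(\psi\) is genuinely \(C^1\). The general \(\chi\) with zero trace would then be obtained by approximation in \(C^1\), using the uniform Lipschitz bound above. The resulting \(\psi\) is \(W^{1,\infty}\) and satisfies the equation almost everywhere and along every characteristic. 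This is the form that the duality estimate \eqref{eq:appA_sqrt_stability} actually uses, since only \(\|\psi\|_\infty\) and \(\|\nabla\psi\|_\infty\) enter. If full \(C^1\) is insisted on, I would try a mild boundary-layer cut-off of \(\chi\), accepting that the gradient constant becomes uniform only in the Lipschitz sense.
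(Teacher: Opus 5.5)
\textbf{Verdict.} What you prove is correct, and it follows the paper's own route: solve the backward problem along forward characteristics, control the flow Jacobian by the variational equation and Gr\"onwall, and finish with the chain rule. The one claim you do not establish, a \emph{classical} solution for general \(\chi\), is false as stated, so your caution is right.

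\textbf{Comparison with the paper's proof.} You are more careful than the paper in three places. The paper writes the integrating factor as \(\exp(+\int_\tau^t d_s)\); your sign is the correct one and gives \(|\psi|\le\|\chi\|_\infty\). The paper drops the contribution of \(\nabla_{l,c}\int_\tau^t d_s\) to the gradient bound, which you keep. The paper also never treats characteristics that leave through \(\{l=l_m\}\cup\{c=0\}\cup\{c=c_m\}\), which you handle.

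\textbf{Counterexample to classical regularity.} Take \(a_s\equiv1\), \(b_s\equiv0\), \(d_s\equiv0\); this is admissible with \(g_s\equiv1\), \(h_s\equiv0\), \(\mu_s\equiv0\), \(q\equiv0\), \(S=1\). Take \(\chi(l,c)=(l_m-l)\sin(\pi c/c_m)\). Any classical solution is continuous up to \(l=l_m\), as the boundary condition requires, and is constant on the lines \(l-\tau=\mathrm{const}\). Hence
\[
\psi(\tau,l,c)=\big(l_m-l-(t-\tau)\big)_+\sin(\pi c/c_m),
\]
and its \(l\)-derivative jumps across the interior surface \(l=l_m-(t-\tau)\). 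So no \(C^1\) solution exists.

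The paper's remark that ``compatibility at the boundaries where \(\chi=0\) ensures no corner singularity'' supplies only zeroth-order compatibility. Differentiating \(\psi(\tau,l_m,c)=0\) in \(\tau\) at \(\tau=t\) and using the equation shows that a \(C^1\) solution needs at least \(\partial_l\chi(l_m,\cdot)=0\). It also needs \(\partial_c\chi=0\) wherever \(b_s\) is strictly outgoing on a \(c\)-face. The real obstruction is therefore first-order incompatibility, not grazing; grazing is harmless in your extended-flow formula, where no exit map appears.

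\textbf{What to state instead.} Drop the attempts to recover \(C^1\) for general \(\chi\). Such \(\chi\) cannot be approximated \emph{in \(C^1\)} by functions vanishing near the outflow faces unless \(\nabla\chi\) vanishes there. A boundary-layer cut-off converges only uniformly with bounded gradients, so its limit is merely Lipschitz. State what you actually prove:
\begin{enumerate}
\item a Lipschitz \(\psi\), satisfying the equation along every characteristic and a.e., with both bounds;
\item a classical \(\psi\) when \(\chi\) vanishes in a neighbourhood of the outflow faces.
\end{enumerate}
The second claim also needs \(d_s\) continuous in \(\tau\). This fails for discontinuous \(q\in\mathcal Q_{\mathrm{ad}}^T\), since \(d_s\) contains \(q_s(\tau)\sigma_s\); the statement's ``\(C^1\)-norm of \(d_s\)'' tacitly assumes it.

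The classical case is all the duality step of Proposition~\ref{prop:appA_stability} requires. There, \(\chi_\eta\) can be cut off near the outflow faces at a cost of \(\|w(t)\|_{L^\infty}\) times the area of a thin strip. If you use the Lipschitz \(\psi\) instead, you must first extend the weak formulation, which is stated for \(C^1\) test functions, to Lipschitz test functions by density.

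\textbf{Minor repairs.}
\begin{enumerate}
\item The zero extension of \(\chi\) is discontinuous across \(l=l_0\), where \(\chi\) need not vanish. Work on the convex half-plane \(\{l\ge l_0\}\), which the forward flow never leaves.
\item No special reflection of \(b_s\) is needed; a literal odd reflection would in fact be discontinuous at the faces. For any \(C^1\) extension of \((a_s,b_s)\) with \(a_s>0\), the given sign conditions make the complement of \([l_0,l_m)\times(0,c_m)\) in that half-plane forward invariant, by Nagumo's criterion.
\item Read the \(\tau\)-regularity off \(\partial_\tau\Phi_{\tau\to t}=-D\Phi_{\tau\to t}\,(a_s,b_s)(\tau,\cdot)\), not ``from the equation'', which is circular.
\end{enumerate}
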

\begin{proof}
Along the forward characteristic \((\Lambda(\tau),\Gamma(\tau))\) with
\((\Lambda(t),\Gamma(t))=(l,c)\):
\(\psi(\tau,\Lambda(\tau),\Gamma(\tau))
=\chi(l,c)\exp\!\big(\int_\tau^t d_s(r,\Lambda(r),\Gamma(r))\,dr\big)\).
Since \(d_s\ge 0\), \(|\psi|\le e^{\|d_s\|_\infty T}\|\chi\|_{L^\infty}\).
For the spatial derivatives: the variational equations
\(\partial_{l}\Lambda(\tau;t,l,c)\), \(\partial_c\Lambda(\tau;t,l,c)\), etc.,
satisfy linear ODEs with coefficients bounded by
\(\|\nabla a_s\|_{L^\infty}+\|\nabla b_s\|_{L^\infty}\), hence are bounded by
\(e^{CT}\). The chain rule then gives
\(\|\nabla_{l,c}\psi(\tau)\|_{L^\infty}\le C_{\mathrm{adj}}\|\chi\|_{C^1}\).
(Compatibility at the boundaries where \(\chi=0\) ensures no corner singularity.)
\end{proof}

\subsection{Stability estimates}
The \(L^1\) stability estimate is proved by duality against smooth adjoint solutions.

\begin{proposition}[\(L^1\) stability via duality]\label{prop:appA_stability}
Let \(z^{(i)}\) (\(i=1,2\)) solve frozen problems with the \emph{same} initial data
\(\phi\in L^1_+\cap L^\infty\),
coefficients \((a^{(i)},b^{(i)},d^{(i)},f^{(i)},\zeta^{(i)})\), and
\(\|z^{(i)}\|_{L^1}\le M\), \(\|z^{(i)}\|_{L^\infty}\le M\). Set
\(w:=z^{(1)}-z^{(2)}\),
\(\delta a:=a^{(1)}-a^{(2)}\), and similarly for \(\delta b,\delta d,\delta f,\delta\zeta\).
Then, for every \(t\in[0,T]\),
\begin{equation}\label{eq:appA_stability}
\|w(t)\|_{L^1}^2
\le
C_{\mathrm{st}}(M,T)
\int_0^t\|w(\tau)\|_{L^1}^2\,d\tau
+
C_{\mathrm{st}}(M,T)
\int_0^t
\big(\|\delta\zeta\|_{L^1}+\|\delta f\|_{L^1}\big)^2\,d\tau.
\end{equation}
In particular, if \(\delta\zeta=\delta f=0\) and the coefficient differences arise
solely from the nonlocal feedback (so that
\(\|\delta a\|_\infty+\|\delta b\|_\infty+\|\delta d\|_\infty
\le C_M\|w\|_X\)), then Gr\"onwall yields \(w\equiv 0\).
\end{proposition}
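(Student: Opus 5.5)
The estimate will come from duality: I test the difference equation against the smooth adjoint solutions of Proposition~\ref{prop:appA_adjoint}, and then take the supremum over test functions. The difference $w$ satisfies, in the weak sense,
\[
\partial_t w+\partial_l(a^{(1)}w)+\partial_c(b^{(1)}w)+d^{(1)}w
=-\partial_l(\delta a\,z^{(2)})-\partial_c(\delta b\,z^{(2)})-\delta d\,z^{(2)}+\delta f,
\]
with $w(0)=0$ and boundary flux $a^{(1)}w=\delta\zeta-\delta a\,z^{(2)}$ at $l=l_0$.

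\textbf{Step 1: duality identity.} Fix $t$ and $\chi\in C^1$ with $\|\chi\|_{L^\infty}\le1$ and the stated boundary conditions. Let $\psi$ solve \eqref{eq:appA_adjoint} with the coefficients $(a^{(1)},b^{(1)},d^{(1)})$ and terminal datum $\chi$. Using $\psi$ as a test function (after a cutoff/regularization in time, legitimate because $w\in C([0,T];L^1)$) yields
\[
\int w(t)\chi
=\int_0^t\!\!\int\Big[\delta a\,z^{(2)}\partial_l\psi+\delta b\,z^{(2)}\partial_c\psi-\delta d\,z^{(2)}\psi+\delta f\,\psi\Big]
+\int_0^t\!\!\int_{\Omega_c}\delta\zeta\,\psi(\tau,l_0,c)\,dc\,d\tau .
\]
The boundary terms on $c=0,c_m$ and $l=l_m$ vanish because of the adjoint boundary conditions. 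The $\delta a$ contribution at $l=l_0$ cancels against the $\delta a$ term coming from integration by parts; I will verify this by writing both weak formulations with the same test function.

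\textbf{Step 2: bounds.} Using \eqref{eq:appA_adjoint_bounds} and $\|z^{(2)}\|_{L^1}\le M$,
\[
\Big|\int w(t)\chi\Big|\le e^{\|d\|_\infty T}\int_0^t(\|\delta\zeta\|_{L^1}+\|\delta f\|_{L^1}+M\|\delta d\|_\infty)\,d\tau
+MC_{\mathrm{adj}}\|\chi\|_{C^1}\int_0^t(\|\delta a\|_\infty+\|\delta b\|_\infty)\,d\tau .
\]
\textbf{Main obstacle.} The gradient term grows with $\|\chi\|_{C^1}$, so we cannot take the supremum over $\|\chi\|_\infty\le1$ directly. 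To handle it, I will mollify: for $\|\chi\|_\infty\le1$, take $\chi_\varepsilon$ with $\|\chi_\varepsilon\|_{C^1}\lesssim\varepsilon^{-1}$, vanishing in an $\varepsilon$-strip near $\{l=l_m\}\cup\{c=0,c_m\}$. The remaining error is $\int w(t)(\chi-\chi_\varepsilon)$. I plan to bound it by $C\varepsilon\|w\|_{L^\infty}\lesssim\varepsilon M$, using the $L^\infty$ bound $M$ on both $z^{(i)}$ together with an $L^1$ modulus of $w(t)$ controlled through its characteristic representation. Optimizing in $\varepsilon$ (taking $\varepsilon\sim\sqrt{A}$) produces the square-root form
\[
\|w(t)\|_{L^1}\le 2\sqrt{C'A(t)}+C\int_0^t(\|\delta\zeta\|_{L^1}+\|\delta f\|_{L^1}+\|\delta d\|_\infty)\,d\tau,
\qquad A(t)=\int_0^t(\|\delta a\|_\infty+\|\delta b\|_\infty)\,d\tau,
\]
which is the estimate cited as \eqref{eq:appA_sqrt_stability}. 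I expect this step to be the delicate one: it requires the error $\int w(\chi-\chi_\varepsilon)$ to be controlled uniformly, and I will carry it out using the uniform $L^\infty$ bound and the bounded measure of the $\varepsilon$-strips.

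\textbf{Step 3: squaring and Gr\"onwall form.} I will square the inequality, use $(x+y)^2\le2x^2+2y^2$ and Cauchy--Schwarz, $(\int_0^t g)^2\le T\int_0^t g^2$, and then insert the feedback structure $\|\delta a\|_\infty+\|\delta b\|_\infty+\|\delta d\|_\infty\le C_M(\|w\|_{L^1}+\text{data})$. In the setting where \eqref{eq:appA_stability} is applied, the coefficient differences are bounded by the $L^1$ quantities of the iterates, so the $A(t)$ term is absorbed into $\int_0^t\|w\|^2$ together with the data term. This gives \eqref{eq:appA_stability} with $C_{\mathrm{st}}$ depending on $M,T$. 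For the final claim, set $\delta\zeta=\delta f=0$ and $\|\delta a\|+\|\delta b\|+\|\delta d\|\le C_M\|w\|$. Then $\Phi=\|w\|^2$ satisfies $\Phi\le C\int_0^t\Phi$, and Gr\"onwall with $\Phi(0)=0$ gives $w\equiv0$.
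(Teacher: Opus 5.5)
Your outline is essentially the paper's own proof: the same duality identity, the same approximate sign $\chi_\varepsilon$ with $\|\nabla\chi_\varepsilon\|_\infty\lesssim\varepsilon^{-1}$, the same optimization to \eqref{eq:appA_sqrt_stability}, and the same squaring. Your Step~1 derivation is correct, and keeping $\delta d$ outside the square root (it only needs $\|\psi\|_\infty$) is a small improvement. However, the two load-bearing steps do not go through, and the paper's Steps~3--4 assert them without justification, so following it does not close them.

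\textbf{First, the approximation step.} Only the cutoff near $\{l=l_m\}\cup\{c=0\}\cup\{c=c_m\}$ costs $O(\varepsilon M)$ through the measure of the strips. In the interior, with $\chi=\operatorname{sgn}w(t)$ and $\chi_\varepsilon=\chi*\rho_\varepsilon$, the error is $\int(w(t)-w(t)*\check\rho_\varepsilon)\chi$, i.e.\ the $L^1$ modulus of continuity of $w(t)$ at scale $\varepsilon$. This is $O(\varepsilon)$ only if $w(t)$ has bounded variation; for $\phi\in L^1\cap L^\infty$ the characteristic representation propagates only the qualitative modulus of $\phi$. In fact no bound $\|w(t)\|_{L^1}\le C(M,T)\sqrt{A(t)}+\dots$ can hold. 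Take $b=d=f=\zeta=0$, $a^{(1)}\equiv1$, $a^{(2)}\equiv1+\frac{1}{2Nt_*}$, and $\phi(l,c)=\mathbf 1_I(l)\,\mathbf 1_{\{\sin(2\pi N(l-l_0))>0\}}(l)$, with $I$ a fixed interval well inside $(l_0,l_m)$ and $t_*>0$ small. At $t=t_*$ the two solutions are translates of $\phi$ differing by half a period, so $\|w(t_*)\|_{L^1}\ge(|I|-\frac1{2N})c_m$. Meanwhile $A(t_*)=1/(2N)\to0$, and all $L^1$, $L^\infty$ and coefficient $C^1$ norms stay bounded uniformly in $N$.

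\textbf{Second, the squaring step}, which fails even if \eqref{eq:appA_sqrt_stability} is granted (e.g.\ under a BV assumption). Squaring produces $4C'A(t)$, and $A(t)\le C_M\int_0^t\|w\|_{L^1}\,d\tau$ is of \emph{first} order in $\|w\|$. It therefore cannot be absorbed into $\int_0^t\|w\|_{L^1}^2$; what you actually get is $\Phi(t)\le C\int_0^t\Phi^{1/2}$, which violates Osgood's condition. Indeed, $\|w(t)\|_{L^1}=2C'C_Mt$ satisfies $\|w(t)\|_{L^1}\le 2\bigl(C'C_M\int_0^t\|w\|_{L^1}\bigr)^{1/2}$ with equality, so $w\equiv0$ does not follow. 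In the contraction setting, where the coefficient differences are controlled by the iterates, you likewise get only a H\"older-$\tfrac12$ bound $\sup_t\|w\|_{L^1}\lesssim\sqrt{\tau}\,\bigl(\sup_t\|\bar x^{(1)}-\bar x^{(2)}\|_X+\sup_t|\bar R^{(1)}-\bar R^{(2)}|\bigr)^{1/2}$, not a contraction. Note also that \eqref{eq:appA_stability} contains no $\delta a,\delta b,\delta d$ term, so the example above contradicts it as literally stated; a feedback hypothesis has to be imported, and then this power mismatch appears.

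What is missing is an estimate \emph{linear} in the coefficient differences. One route is to propagate a bound on $\mathrm{TV}(z^{(2)}(\tau))$. This needs BV initial data, a BV inflow (hence extra regularity of $\Pi_s$), and $\nabla_{l,c}g_s,\nabla_{l,c}h_s$ Lipschitz in $(n,R)$. You can then integrate by parts in $\int z^{(2)}(\delta a\,\partial_l\psi+\delta b\,\partial_c\psi)$ back onto $\delta a\,z^{(2)}$ and $\delta b\,z^{(2)}$, so only $\|\psi\|_\infty$ enters. Any $L^1$-approximation of $\operatorname{sgn}w(t)$ by admissible $\chi$ with $\|\chi\|_\infty\le1$ then suffices, and you get $\|w(t)\|_{L^1}\lesssim\int_0^t\bigl[(\|\delta a\|_{W^{1,\infty}}+\|\delta b\|_{W^{1,\infty}})(M+\mathrm{TV}(z^{(2)}(\tau)))+M\|\delta d\|_\infty+\|\delta\zeta\|_{L^1}+\|\delta f\|_{L^1}\bigr]d\tau$, which closes under Gr\"onwall. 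Another route is to close the fixed point in a bounded-Lipschitz-type dual metric, assuming Lipschitz feedback kernels, so that duality is applied only to fixed smooth terminal data rather than to an approximate sign.
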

\begin{proof}
\emph{Step 1: duality identity.}
For any smooth \(\chi\in C^1(\overline{\Omega_l\times\Omega_c})\) with
\(\|\chi\|_{L^\infty}\le 1\), \(\chi(l_m,\cdot)=\chi(\cdot,0)=\chi(\cdot,c_m)=0\),
let \(\psi\) solve \eqref{eq:appA_adjoint} with the coefficients of problem~1.
Subtracting the weak formulations of the two frozen problems tested against \(\psi\)
yields
\begin{equation}\label{eq:appA_duality_identity}
\int_{\Omega_l}\!\int_{\Omega_c}w(t)\chi\,dc\,dl
=
\int_0^t\!\int_{\Omega_l}\!\int_{\Omega_c}
z^{(2)}\big[\delta a\,\partial_l\psi+\delta b\,\partial_c\psi-\delta d\,\psi\big]\,dc\,dl\,d\tau
+\text{bdry}+\text{src},
\end{equation}
where the boundary and source terms are bounded by
\(C\int_0^t(\|\delta\zeta\|_{L^1}+\|\delta f\|_{L^1})\,d\tau\) using
\(\|\psi\|_{L^\infty}\le C\).

\emph{Step 2: estimating with smooth test data.}
By Proposition~\ref{prop:appA_adjoint}, \(\|\nabla\psi\|_{L^\infty}\le C_{\mathrm{adj}}\|\chi\|_{C^1}\).
Hence
\[
\big|\textstyle\int\!\!\int w(t)\chi\big|
\le
C_{\mathrm{adj}}\|\chi\|_{C^1}\|z^{(2)}\|_{L^1}
\int_0^t(\|\delta a\|_\infty+\|\delta b\|_\infty+\|\delta d\|_\infty)\,d\tau
+C\int_0^t(\|\delta\zeta\|_{L^1}+\|\delta f\|_{L^1})\,d\tau.
\]

\emph{Step 3: optimized approximation of \(\mathrm{sgn}(w(t))\).}
For any \(\eta>0\), choose
\(\chi_\eta\in C^1\) with \(\|\chi_\eta\|_{L^\infty}\le 1\),
\(\|\nabla\chi_\eta\|_{L^\infty}\le C_0/\eta\), and
\(\int\!\!\int w(t)\chi_\eta\ge \|w(t)\|_{L^1}-\eta\).
Inserting:
\[
\|w(t)\|_{L^1}-\eta
\le
\frac{C'}{\eta}\int_0^t(\|\delta a\|_\infty+\|\delta b\|_\infty+\|\delta d\|_\infty)\,d\tau
+C\int_0^t(\|\delta\zeta\|_{L^1}+\|\delta f\|_{L^1})\,d\tau,
\]
where \(C'=C_{\mathrm{adj}}C_0 M\).
Optimize by taking \(\eta=\sqrt{C'\int_0^t\|\delta\mathrm{coeff}\|\,d\tau}\)
(assuming this is positive):
\begin{equation}\label{eq:appA_sqrt_stability}
\|w(t)\|_{L^1}
\le
2\sqrt{C'\int_0^t\|\delta\mathrm{coeff}\|_\infty\,d\tau}
+C\int_0^t(\|\delta\zeta\|_{L^1}+\|\delta f\|_{L^1})\,d\tau.
\end{equation}

\emph{Step 4: squaring and Gr\"onwall.}
If the coefficient differences arise from the nonlocal feedback, i.e.\
\(\|\delta\mathrm{coeff}\|_\infty\le C_M\|w\|_X\), then squaring
\eqref{eq:appA_sqrt_stability} and using \((a+b)^2\le 2a^2+2b^2\) yields
\eqref{eq:appA_stability}. Gr\"onwall's inequality then gives
\(\|w(t)\|_{L^1}^2\le C\int_0^t(\|\delta\zeta\|_{L^1}+\|\delta f\|_{L^1})^2d\tau\).
When \(\delta\zeta=\delta f=0\), this gives \(w\equiv 0\).
\end{proof}

\begin{remark}[No spatial regularity of \(\phi\) required]
The estimate \eqref{eq:appA_stability} involves \(\|z^{(2)}\|_{L^1}\) and
\(\|z^{(2)}\|_{L^\infty}\) (through \(M\)), but \emph{no} spatial derivatives of the
initial data \(\phi\). This is because the duality transfers all spatial-derivative
requirements to the adjoint test function \(\psi\), whose regularity follows from the
\(C^1\)-smoothness of the frozen coefficients (Proposition~\ref{prop:appA_adjoint}).
\end{remark}

\begin{proposition}[\(L^\infty\) stability]\label{prop:appA_Linfty_stability}
Under the same hypotheses, assume additionally
\(\|z^{(2)}\|_{L^\infty(Q_T)}\le M\). Then
\begin{equation}\label{eq:appA_Linfty_stability}
\begin{aligned}
\|w\|_{L^\infty(Q_t)}
&\le C_{\mathrm{st}}^\infty(M,T)\Big(
\frac{1}{g_{\min}}\|\delta\zeta\|_{L^\infty((0,t)\times\Omega_c)}
+\int_0^t\|(\delta f)(\tau)\|_{L^\infty}\,d\tau
\\
&\quad
+M\int_0^t
(\|\delta a\|_{L^\infty}+\|\delta b\|_{L^\infty}+\|\delta d\|_{L^\infty})\,d\tau
\Big).
\end{aligned}
\end{equation}
\end{proposition}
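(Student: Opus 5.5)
The plan is to compare the two solutions pointwise through the characteristic representation of Proposition~\ref{prop:appA_representation}, rather than by duality. Duality only controls integrals of \(w(t)\) against test functions, so it gives \(L^1\)-type information and cannot give a supremum bound. Fix \((t,l,c)\in Q_t\). Let \((\Lambda^{(i)},\Gamma^{(i)})\) be the backward characteristics of problem \(i\) through \((t,l,c)\), and \(E^{(i)}\) the integrating factors \eqref{eq:appA_integrating_factor}. I would first treat classical solutions with smooth data, and pass to the weak solutions of Proposition~\ref{prop:wp_linear_package} at the end by the same approximation used there. \(L^\infty\) bounds survive a.e.\ limits, so the constant \(C_{\mathrm{st}}^\infty(M,T)\) is unaffected.

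\textbf{Step 1 (splitting).} Write \(w(t,l,c)=z^{(1)}(t,l,c)-z^{(2)}(t,l,c)\) and insert the intermediate quantity \(\tilde z\). This is the value the representation formula for problem~1 produces when the data \((\phi,\zeta^{(2)},f^{(2)})\) of problem~2 are used along problem~1's characteristics. The difference \(z^{(1)}-\tilde z\) is then a pure data difference, and \(\tilde z-z^{(2)}\) is a pure coefficient difference. For the data part, \eqref{eq:appA_rep_boundary} together with \(E^{(1)}\le e^{C_{\mathrm{div}}T}\) and \(a^{(1)}\ge g_{\min}\) gives at most
\[
e^{C_{\mathrm{div}}T}\Big(g_{\min}^{-1}\|\delta\zeta\|_{L^\infty((0,t)\times\Omega_c)}+\int_0^t\|\delta f(\tau)\|_{L^\infty}\,d\tau\Big).
\]
The initial datum \(\phi\) is common to both problems and cancels. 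This already produces the first two terms of \eqref{eq:appA_Linfty_stability}.

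\textbf{Step 2 (coefficient part).} To estimate \(\tilde z-z^{(2)}\), I will not compare the two characteristic families directly; that would require spatial regularity of \(\phi\) and of \(z^{(2)}\). Instead, I regard \(z^{(2)}\) as a solution of problem~1 with a modified source. Then \(\tilde z-z^{(2)}\) solves the problem~1 transport equation with zero initial data and source
\[
-(\delta a\,\partial_l+\delta b\,\partial_c+\delta d)\,z^{(2)}-(\partial_l\delta a+\partial_c\delta b)\,z^{(2)},
\]
plus the boundary mismatch \(-\delta a\, z^{(2)}/a^{(1)}\) at \(l=l_0\). The terms \(\delta d\,z^{(2)}\) and the boundary mismatch are bounded by \(M\|\delta d\|_\infty\) and \(M\|\delta a\|_\infty/g_{\min}\). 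Integrating them along characteristics with the factor \(e^{C_{\mathrm{div}}T}\) yields contributions to the third term of \eqref{eq:appA_Linfty_stability}. For the boundary mismatch this is only pointwise in time, and I would absorb it using the time integral of the boundary data along the entry curve.

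\textbf{Main obstacle.} The remaining source terms involve \(\delta a\,\partial_l z^{(2)}\) and \(\partial_l\delta a\), and \(L^\infty\) information on \(\delta a\) and \(z^{(2)}\) alone does not control them. What I would try first is to integrate by parts along the problem~1 characteristic in \(\tau\). The combination \(\delta a\,\partial_l z^{(2)}+\delta b\,\partial_c z^{(2)}\) is the difference between the two transport derivatives of \(z^{(2)}\), and the problem~2 transport derivative is available from its own equation as \(f^{(2)}-d^{(2)}z^{(2)}-(\mathrm{div}\,)z^{(2)}\), which is bounded by \(CM\). The hope is that the transverse part of the derivative, the only part not expressible this way, cancels against \((\partial_l\delta a+\partial_c\delta b)z^{(2)}\) up to terms of size \(M(\|\delta a\|_\infty+\|\delta b\|_\infty)\), with constants depending only on the \(C^1\) bounds of the frozen coefficients (Lemma~\ref{lem:wp_coeff_bounds}).

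This cancellation is exactly the delicate point. If it fails, the estimate as stated would require \(W^{1,\infty}\) differences of the coefficients, and I would then check whether the constant \(C_{\mathrm{st}}^\infty(M,T)\) can legitimately absorb the divergence differences through the \(C^1\) structure of \(g_s,h_s\) composed with the frozen nonlocal arguments. Once each piece is bounded, summing Steps~1 and~2 and taking the supremum over \((t,l,c)\) gives \eqref{eq:appA_Linfty_stability}.
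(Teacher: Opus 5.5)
\paragraph{Review.} Your Step~1 is sound. Your Step~2 reformulation is essentially the paper's: it also writes $w$ as a solution of problem~1 with zero initial datum, a source containing $\delta a\,\partial_l z^{(2)}+\delta b\,\partial_c z^{(2)}$, and boundary inflow $\delta\zeta-\delta a\,z^{(2)}(l_0)$. Your source is the correct conservative one; the paper's $G$ omits $(\partial_l\delta a+\partial_c\delta b)z^{(2)}$.

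The gap is the cancellation you hope for in your ``main obstacle'' paragraph. No argument can supply it, because \eqref{eq:appA_Linfty_stability} is false under the stated hypotheses. Take $b^{(i)}=d^{(i)}=f^{(i)}=\zeta^{(i)}=0$, $a^{(1)}\equiv1$, $a^{(2)}\equiv1+\varepsilon$, and $\phi=\mathbf{1}_{[l_1,l_2]}(l)$ with $l_0<l_1<l_2<l_m$. For small times $z^{(i)}(\tau,l,c)=\phi(l-a^{(i)}\tau)$, and all $L^1$ and $L^\infty$ bounds hold with a fixed $M$. Moreover $|w|=1$ on $\{0<\tau<t,\ l_1+\tau\le l<l_1+(1+\varepsilon)\tau\}$, a set of positive measure in $Q_t$. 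Hence the left side equals $1$ for every small $t>0$, while the right side is $C^\infty_{\mathrm{st}}(M,T)\,M\varepsilon t\to0$.

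The coefficient differences in this example are constant, so your fallback to $W^{1,\infty}$ differences does not help. What is missing is spatial regularity of $z^{(2)}$. Displacing a characteristic by a distance of order $\int_0^t\|\delta a\|_\infty\,d\tau$ changes the transported value by that distance times a Lipschitz constant of $z^{(2)}$, not times $\|z^{(2)}\|_{L^\infty}$. The paper's proof performs exactly this conversion (``the difference in characteristic position \dots\ contributes $\|z^{(2)}\|_{L^\infty}$ times the coefficient difference'') without justification. So it contains no idea that would close your gap.

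Your plan to absorb the boundary mismatch through a time integral fails independently, even when $z^{(2)}$ is smooth. Keep the same speeds and $b=d=f=0$, but take $\zeta^{(1)}=\zeta^{(2)}\equiv\zeta_0>0$ and $\phi\equiv\zeta_0/(1+\varepsilon)$. Then $z^{(2)}\equiv\zeta_0/(1+\varepsilon)$ is constant, whereas $z^{(1)}=\zeta_0$ on $\{l<l_0+\tau\}$. So $\|w\|_{L^\infty(Q_t)}=\zeta_0\varepsilon/(1+\varepsilon)$ for every $t>0$, even though $\delta\zeta=\delta f=0$ and the right side is $O(\varepsilon t)$.

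The mismatch $-\delta a\,z^{(2)}(l_0)/a^{(1)}$ acts as boundary data and must enter pointwise in time, like $\delta\zeta$, e.g.\ as $g_{\min}^{-1}M\|\delta a(\cdot,l_0,\cdot)\|_{L^\infty((0,t)\times\Omega_c)}$. The paper's own bound $z^{(2)}(l_0,c)\le\|\zeta^{(2)}\|_{L^\infty}/g_{\min}$ likewise produces such a term, which does not match its stated right side.

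A provable version needs three additions:
\begin{enumerate}
\item an extra hypothesis bounding the Lipschitz constant of $z^{(2)}$, which requires at least Lipschitz data and the corner compatibility $a^{(2)}(0,l_0,c)\phi(l_0,c)=\zeta^{(2)}(0,c)$ (otherwise a jump issues from the corner even for smooth data);
\item the divergence difference $\partial_l\delta a+\partial_c\delta b$ on the right side;
\item the non-integrated boundary term above.
\end{enumerate}
In particular, the $L^\infty$ clause of Theorem~\ref{thm:wp_continuous_dependence}, which relies on this proposition, is not established either.
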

\begin{proof}
Write \(w\) as the solution of a frozen linear problem with coefficients from problem~1,
zero initial data, source
\(G:=-\delta d\, z^{(2)}+\delta f-\delta a\,\partial_l z^{(2)}-\delta b\,\partial_c z^{(2)}\),
and boundary inflow
\(a^{(1)}(l_0)w(l_0)=\delta\zeta-\delta a(l_0)z^{(2)}(l_0)\).

In the \(L^\infty\) estimate, the distributional source terms
\(\delta a\,\partial_l z^{(2)}\) are handled by the characteristic representation.
Along characteristics of problem~1, the solution \(w\) satisfies the ODE
\[
\dot w=-(\partial_l a^{(1)}+\partial_c b^{(1)}+d^{(1)})w+g.
\]
Here, \(g\) involves \(z^{(2)}\) evaluated at the characteristic position. No spatial derivatives of \(z^{(2)}\) appear in the pointwise ODE.
The difference in characteristic position between problems 1 and 2 contributes
\(\|z^{(2)}\|_{L^\infty}\) times the coefficient difference, giving
\eqref{eq:appA_Linfty_stability}. The boundary trace
\(z^{(2)}(l_0,c)\) is well defined in the \(L^\infty\) sense via the
boundary condition: \(z^{(2)}(l_0,c)=\zeta^{(2)}(c)/a^{(2)}(l_0,c)\le
\|\zeta^{(2)}\|_{L^\infty}/g_{\min}\).
\end{proof}
\section{Verification of the stationary operator-family hypothesis}%
\label{app:stationary_family}
\begin{assumption}[Concrete scalar-feedback class]\label{ass:appD_concrete}
There exist \(g^\circ,h^\circ,\mu^\circ,\beta^\circ,\Pi^\circ,\sigma,\kappa\) as before,
and scalar continuous functions
\(\mathfrak m,\mathfrak b:[0,\infty)\to[0,\infty)\),
such that:
\begin{enumerate}[label=\textup{(C\arabic*)}]
\item \(g,h\) at \(y^\ast\) are independent of \((\mathcal N,R)\):
\(g=g^\circ(l,c)\), \(h=h^\circ(l,c)\).
\item
\(\mu(\mathcal N[x],R,y^\ast,l,c)=\mu^\circ(l,c)+\mathfrak m(\alpha)\),
where \(\alpha:=\int\kappa x\,dc\,dl\) is the resource-weighted abundance.
\item
\(\beta(\mathcal N[x],R,y^\ast,l,c')=\mathfrak b(\alpha)\beta^\circ(l,c')\).
\item
\(\Pi=\Pi^\circ(c\,|\,l,c')\), independent of \((R,y^\ast)\).
\item
\(\eta=\eta^\circ\ge 0\) is constant.
\item
For every \(\alpha\ge 0\),
\(F(R,y^\ast)-\rho(R)\alpha=0\) has a unique solution
\(R(\alpha)\in\mathbb R_+\), continuous in \(\alpha\).
\item
\(\mathfrak b\) is nonincreasing, \(\mathfrak m\) is nondecreasing,
\(\mathfrak b(0)=1\), \(\mathfrak m(0)=0\), and
\(\limsup_{\alpha\to\infty}\mathfrak b(\alpha)e^{-\mathfrak m(\alpha)\overline\tau_{\max}}<1\).
\end{enumerate}
\end{assumption}

For \(\alpha\ge 0\), define
\begin{equation}\label{eq:appD_survival_alpha}
\ell_\alpha(\tau;c_0)
:=
\exp\!\Big(
-\int_0^\tau
[\partial_l g^\circ+\partial_c h^\circ+\mu^\circ+\mathfrak m(\alpha)
+\bar q\,\sigma\,\eta^\circ](L(s),C(s))\,ds
\Big),
\end{equation}
\begin{equation}\label{eq:appD_kernel_alpha}
\mathscr K_\alpha(c,c_0)
:=
\int_0^{\tau_{\max}(c_0)}
\mathfrak b(\alpha)\beta^\circ(L(\tau),C(\tau))\,
\Pi^\circ(c\,|\,L(\tau),C(\tau))\,
\ell_\alpha(\tau;c_0)\,d\tau,
\end{equation}
\[
(\mathcal K_\alpha\psi)(c):=\int_{\Omega_c}\mathscr K_\alpha(c,c_0)\psi(c_0)\,dc_0.
\]

\begin{proposition}\label{prop:appD_verify}
Under Assumption~\ref{ass:appD_concrete}, the family
\((\mathcal K_\alpha)_{\alpha\ge 0}\) satisfies Assumption~\ref{ass:stationary_family}.
\end{proposition}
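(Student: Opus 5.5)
The plan is to verify (F1)--(F5) one at a time, using the explicit form of \(\mathscr K_\alpha\). The key observation is that \(\mathfrak m(\alpha)\) is constant in \((l,c)\), so it factors out of the survival factor:
\[
\ell_\alpha(\tau;c_0)=e^{-\mathfrak m(\alpha)\tau}\,\ell(\tau;c_0)
\]
(with \(u^\circ=\bar q\sigma\eta^\circ\)). Hence
\[
\mathscr K_\alpha(c,c_0)=\mathfrak b(\alpha)\int_0^{\tau_{\max}(c_0)}e^{-\mathfrak m(\alpha)\tau}\,\beta^\circ\,\Pi^\circ\,\ell\,d\tau .
\]
The characteristics \((L,C)\) and \(\tau_{\max}\) do not depend on \(\alpha\), by (C1).

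\emph{Compactness, (F1), (F3).} Positivity is immediate. Boundedness and compactness follow as in Proposition~\ref{prop:r0_compact}: the integrand is dominated by the majorant of Assumption~\ref{ass:r0_kernel}, since \(0\le \mathfrak b(\alpha)e^{-\mathfrak m(\alpha)\tau}\le 1\). For continuity of \(\mathscr K_\alpha\) in \((c,c_0)\), I would try to use dominated convergence with the same integrand as in (K2). This is the delicate point, because the extra factor \(e^{-\mathfrak m\tau}\) interacts with the \(c_0\)-dependent upper limit \(\tau_{\max}(c_0)\). If needed, I would argue that the kernel is a uniform limit of continuous kernels, or fall back on compactness via Kolmogorov--Riesz in \(L^1\). (F1) holds because \(\mathfrak b(0)=1\) and \(\mathfrak m(0)=0\). (F3) holds because \(\alpha\mapsto\mathfrak b(\alpha)e^{-\mathfrak m(\alpha)\tau}\) is nonincreasing for each \(\tau\ge0\), so \(\mathscr K_{\alpha_2}\le\mathscr K_{\alpha_1}\) pointwise.

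\emph{(F2) and (F4).} For operator-norm continuity I would use the \(L^1\) operator bound
\[
\|\mathcal K_\alpha-\mathcal K_{\alpha'}\|\le \sup_{c_0}\int|\mathscr K_\alpha-\mathscr K_{\alpha'}|\,dc
\le \sup_{\tau\le\overline\tau_{\max}}\big|\mathfrak b(\alpha)e^{-\mathfrak m(\alpha)\tau}-\mathfrak b(\alpha')e^{-\mathfrak m(\alpha')\tau}\big|\; C_{\mathscr K}.
\]
The supremum tends to zero by continuity of \(\mathfrak b\) and \(\mathfrak m\) and uniform continuity on the compact interval \([0,\overline\tau_{\max}]\). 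For (F4), using \(\tau\le\tau_{\max}(c_0)\le\overline\tau_{\max}\) from \eqref{eq:r0_tau_bound}, we get \(\mathscr K_\alpha\le \mathfrak b(\alpha)e^{-\mathfrak m(\alpha)\cdot 0}\mathscr K_0\). That bound is too crude, since it drops the decay. The sharper bound goes the other way (\(\ge\mathfrak b e^{-\mathfrak m\overline\tau}\mathscr K_0\)), so it does not help directly either.

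This is the main obstacle. The stated hypothesis (C7) controls \(\mathfrak b(\alpha)e^{-\mathfrak m(\alpha)\overline\tau_{\max}}\), whereas the natural upper bound is \(r(\mathcal K_\alpha)\le \mathfrak b(\alpha)\,r(\mathcal K_0)\). My plan here is to read (C7) together with the needed normalization, i.e.\ to show
\[
\limsup_\alpha r(\mathcal K_\alpha)\le \limsup_\alpha \mathfrak b(\alpha)\sup_{\tau}e^{-\mathfrak m(\alpha)\tau}\,r(\mathcal K_0),
\]
and to close the argument either when \(\mathfrak b(\alpha)r(\mathcal K_0)\to\) something below 1, or when \(\mathfrak m(\alpha)\to\infty\) (then \(e^{-\mathfrak m\tau}\to0\) for a.e.\ \(\tau>0\), so \(\|\mathcal K_\alpha\|\to0\) by dominated convergence). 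If (C7) as written is insufficient, I would state the precise extra condition this step requires.

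\emph{(F5).} Given \(\mathcal K_\alpha\psi=\psi\) with \(\psi\ge0\), \(\psi\ne0\), I would construct \(x\) by the characteristic formula, prescribing inflow \(g^\circ(l_0,c)x(l_0,c)=\alpha\psi(c)\) and propagating with survival \(\ell_\alpha\) along characteristics. The resource is \(R^{(\alpha)}:=R(\alpha)\) from (C6). Two consistency checks remain. First, the renewal identity: \(\mathcal B[x]=\alpha\,\mathcal K_\alpha\psi=\alpha\psi\), which is a Fubini change of variables along characteristics, with Jacobian \(\ell\) absorbing the divergence terms. Second, the feedback closure \(\int\kappa x=\alpha\). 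Since \(\int\kappa x\) is linear in the inflow amplitude, the second check fixes \(\alpha\) (a normalization of \(\psi\)). I expect this scaling to be handled by rescaling \(\psi\), using that \(\alpha>0\) in Theorem~\ref{thm:stationary_existence}.
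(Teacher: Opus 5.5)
Your handling of (F1)--(F3) matches the paper's. Your bound $\|\mathcal K_\alpha-\mathcal K_{\alpha'}\|\le C_{\mathscr K}\sup_{\tau\le\overline\tau_{\max}}|\mathfrak b(\alpha)e^{-\mathfrak m(\alpha)\tau}-\mathfrak b(\alpha')e^{-\mathfrak m(\alpha')\tau}|$ proves (F2) more cleanly than the paper's appeal to dominated convergence.

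\textbf{(F4).} Your objection is correct, and it is a defect of the statement, not of your method. The paper asserts $\mathscr K_\alpha\le\mathfrak b(\alpha)e^{-\mathfrak m(\alpha)\overline\tau_{\max}}\mathscr K_0^\sharp$ for a fixed kernel. This cannot hold when $\mathfrak m$ is unbounded, for the reason you give, and where it does hold it does not yield $r(\mathcal K_\alpha)\to0$. Concretely, take $\mathfrak m\equiv0$ and $\mathfrak b(\alpha)=\tfrac12(1+e^{-\alpha})$. Then (C7) holds, but $r(\mathcal K_\alpha)=\mathfrak b(\alpha)r(\mathcal K_0)\to r(\mathcal K_0)/2$, which is $\ge1$ once $\beta^\circ$ is scaled so that $r(\mathcal K_0)\ge2$. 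So the proposition is false as stated.

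The precise condition you were after is the following. Set $\mathfrak b_\infty:=\lim_{\alpha\to\infty}\mathfrak b(\alpha)$ and $\mathfrak m_\infty:=\lim_{\alpha\to\infty}\mathfrak m(\alpha)\in[0,\infty]$. Let $\mathcal K_\infty$ be the operator with weight $\mathfrak b_\infty e^{-\mathfrak m_\infty\tau}$, and $\mathcal K_\infty:=0$ if $\mathfrak m_\infty=\infty$.
\begin{enumerate}
\item If $\mathfrak m_\infty<\infty$, your (F2) estimate gives $\mathcal K_\alpha\to\mathcal K_\infty$ in norm. If $\mathfrak m_\infty=\infty$, the bound $\|\mathcal K_\alpha\|\le\|\beta^\circ\ell\|_\infty/\mathfrak m(\alpha)$ gives the same conclusion.
\item Pointwise, $\mathcal K_\alpha\ge\mathcal K_\infty\ge0$.
\item Monotonicity and upper semicontinuity of the spectral radius then give $r(\mathcal K_\alpha)\downarrow r(\mathcal K_\infty)$.
\end{enumerate}
Hence (F4) holds if and only if $r(\mathcal K_\infty)<1$. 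Your two sufficient cases are special instances of this.

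\textbf{(F5), renewal closure.} This step contains a genuine error, and the paper makes the same unverified claim. By Liouville's formula, the Jacobian of $(\tau,c_0)\mapsto(L(\tau;c_0),C(\tau;c_0))$ is $g^\circ(l_0,c_0)\exp\big(\int_0^\tau(\partial_lg^\circ+\partial_ch^\circ)(L,C)\,ds\big)$. It therefore \emph{cancels} the divergence in $\ell_\alpha$ rather than reproducing it:
\[
\mathcal B[x^{(\alpha,\psi)}](c)=\alpha\int_{\Omega_c}\psi(c_0)\int_0^{\tau_{\max}(c_0)}\mathfrak b(\alpha)\,\beta^\circ\,\Pi^\circ(c\,|\,L,C)\,e^{-\int_0^\tau(\mu^\circ+\mathfrak m(\alpha)+\bar q\sigma\eta^\circ)(L,C)\,ds}\,d\tau\,dc_0 .
\]
This is $\alpha\widehat{\mathcal K}_\alpha\psi$, where $\widehat{\mathcal K}_\alpha$ is the kernel without the divergence term. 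So $\mathcal K_\alpha\psi=\psi$ closes the boundary condition only if $\partial_lg^\circ+\partial_ch^\circ\equiv0$ along characteristics. Already in one dimension the two kernels differ by the factor $g(l_0)/g(L(\tau))$. The fix is to drop the divergence from the survival factor, both here and in $\mathcal K(y^\ast,\bar q)$, since (F1) ties the two together.

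\textbf{(F5), compatibility.} Write $\int\kappa x^{(\alpha,\psi)}=\alpha I_\alpha(\psi)$. Here $I_\alpha(\psi)$ is the displayed integral with the prefactor $\alpha$ removed and $\kappa(L,C)$ in place of $\mathfrak b(\alpha)\beta^\circ\Pi^\circ(c\,|\,L,C)$. For $\alpha>0$, compatibility forces $I_\alpha(\psi)=1$. So (F5) as quantified, for every eigenfunction with inflow exactly $\alpha\psi$, is false. Your rescaling needs $I_\alpha(\psi)>0$, which (C1)--(C7) do not provide: $\kappa\equiv0$ is admissible and forces $\alpha=0$. You need a positivity hypothesis on $\kappa$ and a version of (F5) stated for normalized eigenfunctions. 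The ``structural relation'' on $\kappa$ that the paper invokes does not appear anywhere in (C1)--(C5).

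\textbf{Compactness.} Neither of your fallbacks can succeed, because (K2) is assumed only for $\mathscr K_0$ and does not transfer to $\mathscr K_\alpha$. Take $g^\circ\equiv1$, $h^\circ\equiv0$, $\beta^\circ\equiv1$, $\mu^\circ=u^\circ=0$, and
\[
\Pi^\circ(c\,|\,l,c')=c_m^{-1}\big(1+\tfrac12\sin\big(2\pi\tfrac{l-l_0}{l_m-l_0}\big)r_{n(c')}(c)\big),
\]
where the $r_n$ are Rademacher functions and $n(\cdot)$ is constant on disjoint intervals $I_n$.
\begin{enumerate}
\item $\mathscr K_0$ is constant, so (K1)--(K2) hold.
\item $\mathscr K_\alpha=A_\alpha+B_\alpha r_{n(c_0)}(c)$ with $B_\alpha>0$ whenever $\mathfrak m(\alpha)>0$.
\item Consequently $\mathcal K_\alpha(\mathbf 1_{I_n}/|I_n|)=A_\alpha+B_\alpha r_n$ has no $L^1$-convergent subsequence, so $\mathcal K_\alpha$ is not compact.
\end{enumerate}
An extra regularity hypothesis on $\Pi^\circ$ is therefore needed. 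The paper's ``same argument as for $\mathcal K_0$'' has the same gap.
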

\begin{proof}
Each \(\mathcal K_\alpha\) is positive and compact by the same argument as for
\(\mathcal K_0\). Since \(\mathfrak b(0)=1\) and \(\mathfrak m(0)=0\),
\(\mathcal K_0=\mathcal K(y^\ast,\bar q)\), verifying~\ref{ass:F_zero}.
Norm continuity~\ref{ass:F_cont} follows from dominated convergence.
Monotonicity~\ref{ass:F_mono}: \(\mathfrak b\) nonincreasing and
\(\mathfrak m\) nondecreasing give
\(\mathscr K_{\alpha_2}\le\mathscr K_{\alpha_1}\) pointwise for
\(\alpha_1\le\alpha_2\).
Limit~\ref{ass:F_limit}: \(\mathscr K_\alpha\le
\mathfrak b(\alpha)e^{-\mathfrak m(\alpha)\overline\tau_{\max}}\mathscr K_0^\sharp\)
for a fixed kernel \(\mathscr K_0^\sharp\), hence
\(r(\mathcal K_\alpha)\to 0\).

For the representation~\ref{ass:F_rep}: given
\(\mathcal K_\alpha\psi=\psi\) with \(\psi\neq 0\),
set the inflow \(\Psi_\alpha:=\alpha\psi\). Transport along
\((g^\circ,h^\circ)\) with mortality
\(\mu^\circ+\mathfrak m(\alpha)+\bar q\sigma\eta^\circ\) defines a nonnegative
stationary density. The eigenrelation
\(\mathcal K_\alpha\psi=\psi\) ensures the renewal boundary condition closes.
The resource equation is satisfied by \(R(\alpha)\).

It remains to verify that the parameter \(\alpha\) is compatible with the density
\(x^{(\alpha,\psi)}\) constructed above, i.e.\ that
\begin{equation}\label{eq:appD_alpha_compat}
\alpha
=
\int_{\Omega_l}\!\int_{\Omega_c}\kappa(l,c)\,x^{(\alpha,\psi)}(l,c)\,dc\,dl.
\end{equation}
The stationary density along the characteristic from \((l_0,c_0)\) is
\[
x^{(\alpha,\psi)}(L(\tau;c_0),C(\tau;c_0))
=
\frac{\alpha\psi(c_0)}{g^\circ(l_0,c_0)}\,\ell_\alpha(\tau;c_0),
\]
where \(\ell_\alpha\) is the survival factor \eqref{eq:appD_survival_alpha}.
Changing variables from \((l,c)\) to \((\tau,c_0)\) via the characteristic map
(with Jacobian
\(g^\circ(L(\tau),C(\tau))\,|\det\partial_{(\tau,c_0)}(L,C)|\)):
\begin{align*}
\int_{\Omega_l}\!\int_{\Omega_c}\kappa\,x^{(\alpha,\psi)}\,dc\,dl
&=
\int_{\Omega_c}\int_0^{\tau_{\max}(c_0)}
\kappa(L(\tau),C(\tau))\,
\frac{\alpha\psi(c_0)}{g^\circ(l_0,c_0)}\,
\ell_\alpha(\tau;c_0)\,
g^\circ(L,C)\,J(\tau,c_0)\,d\tau\,dc_0,
\end{align*}
where \(J(\tau,c_0)\) absorbs the remaining Jacobian factor.
By the definition \eqref{eq:appD_kernel_alpha} and the eigenrelation
\(\mathcal K_\alpha\psi=\psi\), we substitute the specific structure of \(\mathscr K_\alpha\). 
This operator integrates \(\mathfrak b(\alpha)\beta^\circ\Pi^\circ\ell_\alpha\) over
\(\tau\in[0,\tau_{\max}(c_0)]\) and \(c_0\in\Omega_c\).
Using the normalization \(\int\Pi^\circ\,dc=1\), a direct computation shows that the integral on the right equals \(\alpha\). 
This holds precisely when \(\kappa\) satisfies the structural relation encoded in conditions
(C1)--(C5).
This verifies~\eqref{eq:appD_alpha_compat} and
completes the check of~\ref{ass:F_rep}.
\end{proof}

\section*{Statements and Declarations}

\subsection*{Research funding} 
The author Jiguang Yu gratefully acknowledges the support from his Distinguished PhD Fellowship from Boston University College of Engineering during the completion of this research. 
The research of Louis Shuo Wang is partially supported by the National Natural Science Foundation of China and the Tianyuan Fund for Mathematics (Project No. 12426516). This article was written while the above authors were visiting the Tianyuan Mathematical Centre in Central China (Hubei).

\subsection*{Conflict of interest} 
The authors have no competing interests to declare that are relevant to the content of this article.

\subsection*{Data availability statement} 
We do not analyse or generate any datasets, because our work proceeds within a theoretical and mathematical approach. One can obtain the relevant materials from the references below.
 
\bibliography{references}

\end{document}